\providecommand{\U}[1]{\protect\rule{.1in}{.1in}}
\newcommand{\BlackBoxes}{\global\overfullrule5pt}
\newcommand{\R}{\mathbb{R}}
\newcommand{\N}{\mathbb{N}}
\newcommand{\Eop}{\mathbb{E}}
\newcommand{\Pop}{\mathbb{P}}
\newtheorem{theorem}{Theorem}
\newtheorem{corollary}[theorem]{Corollary}
\newtheorem{lemma}[theorem]{Lemma}
\theoremstyle{definition}
\newtheorem{example}[theorem]{Example}
\newtheorem{remark}[theorem]{Remark}
\newtheorem{definition}[theorem]{Definition}
\numberwithin{equation}{section} \numberwithin{theorem}{section}
\def\0{\kern0pt\-\nobreak\hskip0pt\relax}
\def\makeoverbar#1#2#3#4#5#6#7{ \setbox0=\hbox{$\m@th#2\mkern#5mu{{}#3{}}\mkern#6mu$} \setbox1=\null \dimen@=#4\fontdimen8#13 \dimen@=3.5\dimen@
\advance\dimen@ by \ht0 \dimen@=-#7\dimen@ \advance\dimen@ by \wd0
\ht1=\ht0 \dp1=\dp0 \wd1=\dimen@
\dimen@=\fontdimen8#13 \fontdimen8#13=#4\fontdimen8#13
\rlap{\hbox to \wd0{$\m@th\hss#2{\overline{\box1}}\mkern#5mu$}}
\fontdimen8#13=\dimen@}
\def\mylabel#1#2{{\def\@currentlabel{#2}\label{#1}}}
\begin{document}
\title[Mean Field Markov Decision Processes]{Mean Field Markov Decision Processes}
\author[N. \smash{B\"auerle}]{Nicole B\"auerle${}^*$}
\address[N. B\"auerle]{Department of Mathematics,
Karlsruhe Institute of Technology (KIT), D-76128 Karlsruhe, Germany}

\email{\href{mailto:nicole.baeuerle@kit.edu}{nicole.baeuerle@kit.edu}}

\begin{abstract}
We consider mean-field control problems in discrete time with discounted reward, infinite time horizon and compact state and action space. The existence of optimal policies is shown and the limiting mean-field problem is derived when the number of individuals tends to infinity. Moreover, we consider the average reward problem and show that the optimal policy in this mean-field limit  is $\varepsilon$-optimal for the discounted problem if the number of individuals is large and the discount factor close to one. This result is very helpful, because it turns out that  in the special case when the reward does only depend on the distribution of the individuals, we obtain a very interesting subclass of problems where an average reward optimal policy can be obtained by first computing an optimal measure from a static optimization problem and then achieving it with Markov Chain Monte Carlo methods. We give two applications: Avoiding congestion an a graph and optimal positioning on a market place which we solve explicitly.

\end{abstract}
\maketitle

\makeatletter \providecommand\@dotsep{5} \makeatother

\vspace{0.5cm}
\begin{minipage}{14cm}
{\small
\begin{description}
\item[\rm \textsc{ Key words} ]
{\small Mean-field control, Markov Decision Process, Average Reward}
\end{description}
}
\end{minipage}

\section{Introduction}
Mean-field control problems have been developed from McKean-Vlasov processes (see \cite{Mc}) where the dynamics depend on the distribution of the current state itself. In the corresponding control problem the relevant data like reward and transition function not only depend on the current state and action but also on the distribution of the state. Whereas the original motivation comes from physics these kind of problems are able to model the interaction of a large population. Thus, other popular applications include finance, queueing, energy and security problems among others. In this paper we consider mean-field control problems in discrete time in contrast to the majority of literature which concentrates on  continuous time models. Moreover, our optimization criterion is to maximize the social benefit of the system i.e. the overall  expected reward. In particular in our paper individuals cooperate in contrast to the game situation where one usually tries to find the Nash equilibrium of the system. Here we rather aim at obtaining the Pareto optimal solution. A comprehensive overview over continuous-time mean-field games can be found in \cite{cd18}. These games have been introduced in economics and later studied in mathematics since at least 15 years (see e.g. \cite{ll} for one of the first mathematical papers on this topic). 

We review briefly the latest results on {\em discrete-time} mean-field problems.
First note that there have been some early studies of interactive games in \cite{JR88} under the name {\em anonymous sequential games} and in \cite{wein05} of so-called {\em oblivious games} which are in nature very similar to mean-field games. For a recent paper on discrete-time mean-field games and a literature survey, see for example \cite{saldi}. In this paper Markov Nash equilibria are considered in a model without common noise. For an early game paper with finite state space see \cite{gomes}. Since our paper is not a game and more in the spirit of Markov Decision Processes (MDPs) we concentrate our literature survey on control papers. One of the first papers in this area have been \cite{gg11,ggb12}.  In both papers the authors' goal is to investigate the convergence of a large interacting population process to the simpler mean-field model. More precisely, the authors show convergence of value functions and convergence of optimal policies which implies the construction of asymptotically optimal policies.  In both papers the state space is finite and the action space compact. Whereas in \cite{gg11}  the convergence rate is studied, in \cite{ggb12} the authors also scale the time steps to obtain a continuous-time deterministic limit. Finite as well as infinite-horizon discounted reward problems are considered. In \cite{hjm} the authors also investigate convergence in a discounted reward problem, however consider the situation that the random disturbance density in unknown. A consumption-investment example is discussed there.  In \cite{hjm2} the same authors treat the unknown disturbance as a game against nature. The paper \cite{pw} already starts from a discrete-time mean-field control problem.  The authors derive the value iteration and solve an LQ McKean-Vlasov control problem.  In contrast to our paper there is no common noise, the authors restrict to finite time horizon and do not use MDP theory to solve their problem. However, their model data like cost and transition function may also depend on the distribution of actions. LQ-problems are popular as applications of mean-field control since it is often possible to obtain optimal policies in these cases. E.g.\ \cite{eln} is entirely devoted to these kind of problems.  

  The two papers which are closest to ours, at least as far as the model is concerned, are \cite{cltf,mp}. In both papers, the model data may also depend on the distribution of actions, but there is no restriction on admissible actions. Both consider a discounted problem with infinite time horizon. In \cite{cltf} the authors work with lower semicontinuous value functions, whereas we show continuity under the same assumptions. The main issues in \cite{cltf} are an extensive discussion of different types of policies and the development of Q-learning algorithms. We however start already with Markovian deterministic policies since in MDP theory it is well-known that history-dependent policies or randomized policies do not increase the value. Moreover, we consider the convergence of the $N$-individuals problem as well as average reward optimization.  In \cite{mp} the authors deal with so-called open-loop controls and restrict to individualized or decentralized information.  They investigate the rate of convergence from the $N$-population model to the mean-field problem. They also derive a fixed point characterization of the value function and discuss the role of randomized controls. Since in \cite{mp} decisions may only depend on the history of the single agent an additional source of randomness is required such that individuals with same history may take different actions.

Other recent papers discuss reinforcement learning for  mean-field control problems, see e.g. \cite{clt,cltf,ggw19,ggw20}. In the second part of the paper we consider average reward mean-field control problems which is a new aspect. There are papers on average reward games, like \cite{bis} where the transition probability does not depend on the empirical distribution of individuals and \cite{wi20} where under some strong ergodicity assumptions the existence of a stationary mean-field equilibrium is shown. Both papers do not consider the vanishing discount approach which we do here. The recent paper \cite{cdf} considers the vanishing discount approach, but in a continuous-time setting and for a game.  

The main contributions of our paper are as follows: We first want to stress the point that mean-field control problems fit naturally into the established MDP theory. We start with a problem where $N$ interacting individuals try to maximize their expected discounted reward over an infinite time horizon.  Reward and transition functions may depend on the empirical measure of the individuals. Moreover, the transition functions of individuals depend on an idiosyncratic noise and a common noise.  Due to symmetry reasons instead of taking the state of each individual as a common state of the system it is enough to know the empirical measure over the states. This equivalence implies an MDP formulation where the underlying state process consists of empirical measures. A similar observation can be found in \cite{mp}, however there the authors take the mean-field limit first.  Letting the number $N$ of individuals tend to infinity,  implies a mean-field limit by  applying the Glivenko-Cantelli theorem. The idiosyncratic noise vanishes in the limit. In our setting state and action spaces are compact Borel spaces. We also discuss the existence of optimal policies which is rarely done in other papers. E.g. we give explicit conditions under which an optimal {\em deterministic} policy does exist for the limit problem as well as for the initial $N$-individuals problem.  Moreover, we  investigate  average optimality in mean-field control problems, an aspect  which is neglected in the literature. Applying results from MDP theory leads to an average reward optimality inequality. In some cases we obtain optimal policies in this setting rather easily. Since we use the vanishing discount approach, we can  show  that these policies are $\varepsilon$-optimal for the initial problem when the number of individuals is large and the discount factor close to one. Thus, we get some kind of double approximation which is helpful in some applications. Indeed, it turns out that the  case when the reward does not depend on the action yields an interesting special case. The average reward problem can then be solved by first finding an optimal measure for a static optimization problem and then by using Markov Chain Monte Carlo to find an optimal randomized decision rule which achieves the optimal measure in the limit. We show how this works in a network example where the aim is to avoid congestion. Another interesting feature of the solution is that it is a decentralized control, i.e. individuals can decide optimally based on their own state without knowing the distribution of all individuals, i.e.\ individuals do not have to communicate. A second example is the optimal placement on a market square.

The paper is organized as follows: In the first section we introduce the model with a finite number of $N$ individuals. We give conditions under which the optimality equation holds and optimal policies exist. In Section \ref{sec:equiMDP} we show how to formulate an equivalent MDP whose state space consists of the empirical measures of individuals. Based on this formulation we let the number $N$ of individuals tend to infinity in the next section. We prove the convergence of value functions and show how an asymptotically optimal policy can be constructed. In Section \ref{sec:ARO} we consider the average reward problem via the vanishing discount approach. Under some ergodicity assumptions we prove the existence of average reward optimal policies and verify that the value function satisfies an average reward optimality inequality. Next we show how to use this optimal policy to construct $\varepsilon$-optimal policies for the original problem.

We discuss how to solve average reward problems when the reward depends only on the distribution of individuals  and not on the action. Finally in Section \ref{sec:app} we consider two applications (network congestion and positioning on market place) which we solve explicitly. The appendix contains additional material which consists of a useful convergence result and the definition of the Wasserstein distance and  Wasserstein ergodicity. Moreover, longer proofs are also deferred to the appendix.

\section{The Mean-Field Model}\label{sec:MFM}
 We consider the following Markov Decision Process with a finite number of individuals:
Suppose we have a compact Borel set $S$ of states and $N$ statistically equal individuals. Each individual is at the beginning in one of the  states, i.e.\ the state of the system is described by a vector $\mathbf{x}=(x_1,\ldots ,x_N)\in S^N$ which represents the states of the individuals. In case we need the time index $n$, we write $x_n^i$, $i=1,\ldots ,N$. Each individual can choose actions from the same Borel set $A$. Let $D(x)\subset A$ be the actions available for one individual who is in state $x\in S$, i.e. $\mathbf{a}=(a_1,\ldots,a_N)\in \mathbf{D}(\mathbf{x}):=D(x_1)\times\ldots\times D(x_N)$ is the vector of admissible actions for all individuals. We denote  $D:= \{ (x,a) \in S\times A : a\in D(x) \mbox{ for all } x\in S\}$ and assume that it contains the graph of a measurable mapping $f:S\to A$. Moreover, $\mathbf{D} := \{ (\mathbf{x},\mathbf{a}) | \mathbf{a}\in \mathbf{D}(\mathbf{x})\} $. After choosing an action each individual faces a random transition. In order to define this, suppose that $(Z_n^i)_{n\in\N}, i=1,\ldots ,N$ and $(Z_n^0)_{n\in \N}$ are sequences of i.i.d.\ random variables with values in a Borel set  $\mathcal{Z}$.  The sequence $(Z_n^0)_{n\in \N}$ will play the role of a common noise. In what follows we need the empirical measure of $\mathbf{x}$, i.e. we denote
$$ \mu[\mathbf{x}] := \frac1N\sum_{i=1}^N \delta_{x_i}$$
where $\delta_y$ is the Dirac measure in point $y$. $\mu[\mathbf{x}]$ can be interpreted as a distribution on $S$. We denote by $\mathbb{P}(S)$ the set of all distributions on $S$ and by 
$$ \mathbb{P}_N(S):= \{ \mu\in \mathbb{P}(S)\;| \; \mu= \mu[\mathbf{x}], \mbox{ for } \mathbf{x}  \in S^N \},$$
the set of all distributions which are empirical measures of $N$ points. On these sets we consider the topology of weak convergence. The transition function of the system is now a combination of the individual transition functions which are given by a  measurable mapping $T:
S\times A\times \mathbb{P}(S)\times \mathcal{Z}^2\to S$ such that
$$x_{n+1}^i = T(x_n^i, a_n^i, \mu[\mathbf{x}_n], Z_{n+1}^i, Z_{n+1}^0) $$
for $i=1,\ldots ,N$. Note that the individual transition may also depend on the empirical distribution $\mu[\mathbf{x}_n]$ of all individuals. In total the transition function for the entire system is a measurable mapping $\mathbf{T}: \mathbf{D} \times \mathbb{P}_N(S)\times \mathcal{Z}^{N+1}\to S^N$  of the state $\mathbf{x}$, the chosen actions $\mathbf{a}\in \mathbf{D}(\mathbf{x})$, the empirical measure $\mu[\mathbf{x}]$ and the disturbances $\mathbf{Z}_{n+1}:=(Z_{n+1}^1,\ldots, Z_{n+1}^N), Z_{n+1}^0$ such that
$$\mathbf{x}_{n+1}= \mathbf{T}(\mathbf{x}_n,\mathbf{a}_n,\mu[\mathbf{x}_n], \mathbf{Z}_{n+1}, Z_{n+1}^0)= \Big( T(x_n^i, a_n^i, \mu[\mathbf{x}_n], Z_{n+1}^i, Z_{n+1}^0)\Big)_{i=1,\ldots ,N}.$$
Last but not least each individual generates a bounded one-stage reward $r : S\times A\times \mathbb{P}(S)\to \R$ which is given by $r(x_i,a_i,\mu[\mathbf{x}])$, i.e.\ it may also depend on the empirical distribution of all individuals. The total one-stage reward of the system is the average
$$ \mathbf{r}(\mathbf{x},\mathbf{a}) :=\frac1N \sum_{i=1}^N r(x_i,a_i, \mu[\mathbf{x}])$$
of all individuals. The first aim will be to maximize the joint expected discounted reward of the system over an infinite time horizon, i.e. we consider here the {\em social optimum} of the system or {\em Pareto optimality}.  In particular the agents have to work together in order to optimize the system. This is in contrast to mean-field games where each individual tries to maximize her own expected discounted reward and where the aim is to find Nash equilibria. 
We make the following assumptions:
\begin{itemize}
\item[(A0)] $D$ is compact.
\item[(A1)] 
$x\mapsto D(x)$ is upper semicontinuous, i.e.\ for all $x\in S$: If $x_n\to x$ for $n\to\infty$ and $a_n\in D(x_n)$, then $(a_n)$ has an accumulation point in $D(x)$.
\item[(A2)]  $(x,a,\mu) \mapsto r(x,a,\mu)$ is upper semicontinuous.
\item[(A3)] $ (x,a,\mu) \mapsto T(x,a,\mu,z,z_0)$ is continuous for all $z ,z_0 \in \mathcal{Z}.$
\end{itemize}

A policy in this model is given by $\pi=(f_0,f_1,\ldots)$ with $f_n \in F$ being a decision rule where $$F:= \{ f: S^N \to A^N \;| \;  f \mbox{ is measurable } f(\mathbf{x})\in \mathbf{D}(\mathbf{x}) \mbox{ for all } \mathbf{x}\in S^N\}$$ is the set of all decision rules. In case we do not need the time index $n$ we write $f(\mathbf{x}):=(f^1(\mathbf{x}),\ldots,f^N(\mathbf{x}))$. It is not necessary to introduce randomized or history-dependent policies here, since we obtain a classical MDP below and it is well-known that an optimal policy will be among deterministic Markov ones. We assume  that each individual has information about the position of all other individuals. This point of view can be interpreted as a {\em centralized control problem } where all information is collected and shared by a central controller.

Together with the distributions of $(Z_n^i), (Z_n^0)$ and the transition function $\mathbf{T}$, a policy $\pi$ induces a probability measure $\Pop_\mathbf{x}^\pi$ on the measurable space $$(\Omega=S^N\times S^N\times \ldots, \mathcal{F}=\mathcal{B}(S^N) \otimes \mathcal{B}(S^N) \otimes \ldots)$$ where $ \mathcal{B}(S^N) $ is the Borel $\sigma$-algebra on $S^N$. The corresponding state process is denoted by $(\mathbf{X}_n)$ where $\mathbf{X}_n(\omega_1,\omega_2,\ldots)=\omega_n\in S^N$ and the action process is denoted by $(\mathbf{A}_n)$ where $\mathbf{A}_n(\omega_1,\omega_2,\ldots)=f_n(\omega_n).$ Our aim is to maximize the expected discounted reward of the system over an infinite time horizon. Hence we define for a  policy $\pi=(f_0,f_1,\ldots)$
\begin{eqnarray}
V_\pi^N(\mathbf{x})&:=&\frac1N \sum_{i=1}^N \sum_{k=0}^\infty \beta^k \Eop_\mathbf{x}^\pi\big[r(X_k^i,A_k^i, \mu[\mathbf{X}_k])\big]\\
V^N(\mathbf{x}) &:=& \sup_\pi V_\pi^N(\mathbf{x})
\end{eqnarray}
where $\beta\in (0,1)$ is a discount factor. $\Eop_\mathbf{x}^\pi$ is the expectation w.r.t.\ $\Pop_\mathbf{x}^\pi$. $V^N(\mathbf{x})$ is the maximal expected discounted reward over an infinite time horizon, initially given the configuration $\mathbf{x}$ of individual's states. 

\begin{remark}\label{rem:symmetry}
It is not difficult to see that $V^N$ is symmetric, i.e. $V^N(\mathbf{x})=V^N(\sigma(\mathbf{x}))$ for any permutation $\sigma(\mathbf{x})$ of $\mathbf{x}$ because the reward $\mathbf{r}(\mathbf{x},\mathbf{a})=\mathbf{r}(\sigma(\mathbf{x}),\sigma(\mathbf{a}))$ and the transition function $\mathbf{T}(\mathbf{x},\mathbf{a},\mu[\mathbf{x}], \mathbf{Z}, Z^0)=\mathbf{T}(\sigma(\mathbf{x}),\sigma(\mathbf{a}),\mu[\sigma(\mathbf{x})], \mathbf{Z}, Z^0)$ are symmetric. This is a simple observation but in the end leads to the conclusion that it is only necessary to know how many individuals are in the different states.
\end{remark}

In what follows we introduce some notations.

\begin{definition}
Let us define:
\begin{itemize}
\item[a)] The set $\mathbb{M} := \{v:S^N \to \R \;| \;  v \mbox{ is bounded and upper semicontinuous}\}$.
\item[b)] The operator $U$ on $\mathbb{M}$   by
\begin{eqnarray*}
Uv(\mathbf{x}) = (Uv)(\mathbf{x}) &:=& \sup_{\mathbf{a}\in \mathbf{D}(\mathbf{x})} \Big\{ \mathbf{r}(\mathbf{x},\mathbf{a})+ \beta \Eop\Big[v\big(  \mathbf{T}(\mathbf{x}, \mathbf{a}, \mu[\mathbf{x}], \mathbf{Z}, Z^0)\big)\Big]\Big\}.
\end{eqnarray*}
\item[c)] A decision rule $f\in F$ is called {\em maximizer} of $v\in \mathbb{M}$ if $$Uv(\mathbf{x})= \mathbf{ r}(\mathbf{x},f(\mathbf{x}))+ \beta \Eop\Big[v\big(  \mathbf{T}(\mathbf{x}, f(\mathbf{x}), \mu[\mathbf{x}], \mathbf{Z}, Z^0)\big)\Big].$$
\end{itemize}
\end{definition}

From classical MDP theory we obtain:

\begin{theorem}\label{thm:model1}
Assume (A0)-(A3). Then:
\begin{itemize}
\item[a)]  The value function $V^N$ is the unique fixed point of the $U$-operator in  $\mathbb{M}$, i.e. it satisfies the optimality equation $V^N=U V^N$.
\item[b)]  $V^N = \lim_{n\to\infty} U^n 0$.
\item[c)] There exists a maximizer of $V^N$ and every maximizer $f^*\in F$ of $V^N$ defines an optimal stationary (deterministic) policy $(f^*,f^*,\ldots)$.
\end{itemize}
\end{theorem}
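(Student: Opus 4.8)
The statement is exactly what classical discounted MDP theory delivers once one checks that the $N$-individuals model fits the framework, so the plan is to verify the relevant continuity/compactness conditions and then quote (or reprove) the standard structure theorem. I would first observe that the model is an MDP with compact Borel state space $S^N$, action space $A^N$, admissible-action correspondence $\mathbf{x}\mapsto\mathbf{D}(\mathbf{x})$, discount factor $\beta\in(0,1)$, bounded one-stage reward $\mathbf{r}$, and transition law obtained by pushing the fixed distribution of $(\mathbf{Z},Z^0)$ forward through $\mathbf{T}(\mathbf{x},\mathbf{a},\mu[\mathbf{x}],\cdot,\cdot)$. By interchanging the two finite sums, $V_\pi^N(\mathbf{x})=\sum_{k\ge0}\beta^k\Eop_\mathbf{x}^\pi[\mathbf{r}(\mathbf{X}_k,\mathbf{A}_k)]$, so this is precisely the usual infinite-horizon discounted value of a bounded-reward MDP.

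The one elementary point to record is that $\mathbf{x}\mapsto\mu[\mathbf{x}]$ is continuous from $S^N$ into $\mathbb{P}(S)$ with the weak topology: if $\mathbf{x}_k\to\mathbf{x}$ in $S^N$ then $\mu[\mathbf{x}_k]=\frac1N\sum_i\delta_{x_i^k}\to\frac1N\sum_i\delta_{x_i}=\mu[\mathbf{x}]$ weakly. Combining this with (A0)--(A3) gives the structural assumptions: by (A0) each $D(x)$, hence each product $\mathbf{D}(\mathbf{x})=D(x_1)\times\cdots\times D(x_N)$, is compact, and by (A1) the correspondence $\mathbf{x}\mapsto\mathbf{D}(\mathbf{x})$ is upper semicontinuous as a finite product of upper semicontinuous compact-valued correspondences (a diagonal-subsequence argument); by (A2) and the continuity of $\mathbf{x}\mapsto\mu[\mathbf{x}]$, the reward $\mathbf{r}(\mathbf{x},\mathbf{a})=\frac1N\sum_i r(x_i,a_i,\mu[\mathbf{x}])$ is a finite sum of compositions of an upper semicontinuous function with continuous maps, hence upper semicontinuous, and it is bounded since $r$ is; by (A3) the map $(\mathbf{x},\mathbf{a})\mapsto\mathbf{T}(\mathbf{x},\mathbf{a},\mu[\mathbf{x}],\mathbf{z},z_0)=\bigl(T(x_i,a_i,\mu[\mathbf{x}],z_i,z_0)\bigr)_i$ is continuous for every fixed $(\mathbf{z},z_0)\in\mathcal{Z}^{N+1}$.

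Next I would show that $U$ maps $\mathbb{M}$ into $\mathbb{M}$ and possesses maximizers there. For $v\in\mathbb{M}$ and each fixed $\omega$, the map $(\mathbf{x},\mathbf{a})\mapsto v\bigl(\mathbf{T}(\mathbf{x},\mathbf{a},\mu[\mathbf{x}],\mathbf{Z}(\omega),Z^0(\omega))\bigr)$ is bounded and upper semicontinuous, being the composition of the bounded upper semicontinuous $v$ with a continuous map; since $v$ is bounded, the reverse Fatou lemma gives that $(\mathbf{x},\mathbf{a})\mapsto\Eop\bigl[v(\mathbf{T}(\mathbf{x},\mathbf{a},\mu[\mathbf{x}],\mathbf{Z},Z^0))\bigr]$ is again bounded and upper semicontinuous, and adding $\mathbf{r}$ preserves this. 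Finally, taking the supremum of a bounded upper semicontinuous function over the compact-valued upper semicontinuous correspondence $\mathbf{D}(\cdot)$ again yields an upper semicontinuous function, and the supremum is attained for every $\mathbf{x}$ (a maximum theorem for upper semicontinuous data); a measurable selection theorem then produces a measurable maximizer $f\in F$. Hence $Uv\in\mathbb{M}$.

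To conclude, note that on the Banach space of bounded measurable functions on $S^N$ with the supremum norm, $U$ is a $\beta$-contraction (standard, from $|\sup-\sup|\le\sup|\cdot|$ and $\beta<1$), so $U^n0$ converges uniformly to the unique bounded fixed point $v^*$; a uniform limit of upper semicontinuous functions is upper semicontinuous, so $v^*\in\mathbb{M}$ and it is the unique fixed point of $U$ in $\mathbb{M}$. The standard verification argument identifies $v^*=V^N$: iterating $v^*=Uv^*$ yields $v^*(\mathbf{x})\ge\Eop_\mathbf{x}^\pi\bigl[\sum_{k=0}^{n-1}\beta^k\mathbf{r}(\mathbf{X}_k,\mathbf{A}_k)\bigr]+\beta^n\Eop_\mathbf{x}^\pi[v^*(\mathbf{X}_n)]$ for every $\pi$, and letting $n\to\infty$ (using boundedness of $v^*$ and $\mathbf{r}$ together with $\beta<1$) gives $v^*\ge V_\pi^N$, hence $v^*\ge V^N$; applying the same computation with equality to a maximizer $f$ of $v^*$ gives $v^*=V_{(f,f,\dots)}^N\le V^N$. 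This proves (a), (b), and (c) (the last since a maximizer of $V^N=v^*$ is a maximizer of $v^*$, and the above shows the corresponding stationary policy is optimal). The only genuinely technical ingredients are the upper-semicontinuity bookkeeping — preservation under integration via reverse Fatou and under the parametrised supremum — together with the measurable selection of the maximizer; these are precisely what the cited MDP structure theorem packages, so in the paper it suffices to quote it.
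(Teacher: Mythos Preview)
Your proposal is correct and follows essentially the same route as the paper: verify that $\mathbf{x}\mapsto\mu[\mathbf{x}]$ is continuous, use (A0)--(A3) to obtain compactness of $\mathbf{D}(\mathbf{x})$, upper semicontinuity of $\mathbf{x}\mapsto\mathbf{D}(\mathbf{x})$ and of the map in \eqref{eq:rvmap}, establish that $U$ is a $\beta$-contraction preserving $\mathbb{M}$, and then invoke the standard discounted MDP structure theorem (the paper cites Proposition~2.4.3 and Theorem~7.3.5 of \cite{br11} rather than spelling out the reverse-Fatou and verification steps). The only cosmetic difference is that you carry out the contraction argument in the ambient Banach space of bounded measurable functions and then argue that the fixed point lies in $\mathbb{M}$ as a uniform limit of upper semicontinuous functions, which is in fact slightly cleaner than the paper's phrasing of $\mathbb{M}$ itself as a Banach space.
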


The proof of this statement and all other longer proofs can be found in the appendix. We summarize the model data below: \\

\hrule
\vspace*{0.2cm}

\begin{tabular}{ll}
{\bf Model MDP }& \\
State space & $S^N \ni \mathbf{x}=(x_1,\ldots ,x_N)$ \\
Admissible actions &  $\mathbf{D}(\mathbf{x}):=D(x_1)\times\ldots\times D(x_N)\ni \mathbf{a}=(a_1,\ldots,a_N)$\\
Transition function & $\mathbf{T}(\mathbf{x}_n,\mathbf{a}_n,\mu[\mathbf{x}_n], \mathbf{Z}_{n+1}, Z_{n+1}^0)= \Big( T(x_n^i, a_n^i, \mu[\mathbf{x}_n], Z_{n+1}^i, Z_{n+1}^0)\Big)_{i=1,\ldots ,N}$\\
Reward & $ \mathbf{r}(\mathbf{x},\mathbf{a}) :=\frac1N \sum_{i=1}^N r(x_i,a_i, \mu[\mathbf{x}])$\\
Policy & $\pi=(f_0,f_1,\ldots),$ \\
& $f_n\in F:= \{ f: S^N \to A^N \;| \;  f \mbox{ is measurable } f(\mathbf{x})\in \mathbf{D}(\mathbf{x}),\; \forall \mathbf{x}\in S^N\}$
\end{tabular}
\hrule

\begin{example}\label{ex:running}
Suppose individuals move on a triangle. The state space is given by the nodes $S=\{1,2,3\}$. Admissible actions are adjacent nodes, i.e.\ $D(1)=\{2,3\}, D(2)=\{1,3\}, D(3)=\{1,2\}$. The individual one-stage reward may be given by $r(x_i,a_i,\mu)= 1_{\{1\}}(x_i)- 1_{\{ |1-\bar\mu|\le 0.5\}}$.
Here $\bar \mu = \int x\mu(dx)$. This means an individual gets a reward of 1 when it is in state 1, but only when the average position of the others is away from 1. A transition function may be 
$$ T(x, a, \mu, z, z^0)= \left\{ \begin{array}{cl}
a, & \mbox{ if } z\in [0,\frac12),\\
x, & \mbox{ if } z\in [\frac12,1]
\end{array}\right. $$
For $N=5$ individuals, a state may be $\mathbf{x}=(1,2,3,1,3)$ and an action $\mathbf{a}=(2,1,2,3,1)\in \mathbf{D}(\mathbf{x})$. In this case $\mu[\mathbf{x}]=(2/5,1/5,2/5)$ and $\mathbf{r}(\mathbf{x},\mathbf{a}) = 2/5$.
\end{example}

\section{The Mean-Field MDP}\label{sec:equiMDP}
Suppose that $N$ is large. Even if the state space $S$ is small, the solution of the problem may not be computationally tractable any more because $S^N$ is large. We seek for some simplifications. In particular we want to exploit the symmetry of the problem. In the last section we have seen that the empirical measures of the individuals' states is the essential information. Thus, we define as new state space $\mathbb{P}_N(S)$. Further we define the following sets:
\begin{eqnarray*}
\nonumber
\hat D (\mu) &:=& \{   \mu[(\mathbf{x},\mathbf{a})] \;| \;  \mathbf{x}\in S^N \mbox{ s.t. } \mu[\mathbf{x}] =\mu \mbox{ and } \mathbf{a}\in \mathbf{D}(\mathbf{x})\},\;  \mu\in \Pop_N(S)\\ \nonumber
\hat D &:= & \{ (\mu,Q) \;| \;  \mu \in \mathbb{P}_N(S), Q \in \hat D(\mu)\}
\\ \label{eq:GN}
\hat F &:=& \{ \varphi : \mathbb{P}_N(S) \to \mathbb{P}_N(D) \;| \;  \varphi \mbox{ measurable, }  \varphi(\mu)\in \hat D(\mu) \mbox{ for all } \mu \in  \mathbb{P}_N(S) \},
\end{eqnarray*}
where
$$\mathbb{P}_N(D) := \{ Q\in \mathbb{P}(D) \; | \; Q=  \mu[(\mathbf{x},\mathbf{a})]  \mbox{ for }  (\mathbf{x},\mathbf{a}) \in \mathbf{D} \} $$
is the set of all probability measures on $D$ which are empirical measures on $N$ points.
The set $\hat D(\mu) $ consists of probability measures on $D$ which are empirical measures on  $N$ points and whose first marginal distribution equals $\mu$.
We obtain the following result.

\begin{lemma}\label{lem:FG}
Suppose $\mathbf{a}\in \mathbf{D}(\mathbf{x}) $ is an arbitrary action in state $\mathbf{x}\in S^N$. Then there exists an admissible $Q\in \hat D (\mu[\mathbf{x}]),$   s.t.
\begin{equation}\label{eq: lem31}
\mathbf{r}(\mathbf{x},\mathbf{a})=  \int_D r(x,a,\mu)Q(d(x,a)) =:  \hat{r}(\mu,Q),
\end{equation} 
for all $\mathbf{x}\in S^N$. The converse is also true, i.e. if $Q\in \hat D (\mu[\mathbf{x}])$ then there exists an   $\mathbf{a}\in \mathbf{D}(\mathbf{x}) $  s.t.\  \eqref{eq: lem31} holds.
\end{lemma}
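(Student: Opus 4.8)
The plan is to prove both directions by writing down the relevant object explicitly and then reducing the reward identity to the definition of integration against an empirical measure. For the first claim, I would set $Q:=\mu[(\mathbf{x},\mathbf{a})]=\frac1N\sum_{i=1}^N\delta_{(x_i,a_i)}$. Since $a_i\in D(x_i)$ we have $(x_i,a_i)\in D$ for every $i$, so $Q\in\mathbb{P}_N(D)$, and the first marginal of $Q$ is $\frac1N\sum_{i=1}^N\delta_{x_i}=\mu[\mathbf{x}]$; hence $Q\in\hat D(\mu[\mathbf{x}])$ is admissible. Writing out the integral then gives $\int_D r(x,a,\mu[\mathbf{x}])\,Q(d(x,a))=\frac1N\sum_{i=1}^N r(x_i,a_i,\mu[\mathbf{x}])=\mathbf{r}(\mathbf{x},\mathbf{a})$, which is exactly \eqref{eq: lem31}.

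For the converse, suppose $Q\in\hat D(\mu[\mathbf{x}])$. By definition of $\hat D(\mu)$ there exist $\mathbf{y}\in S^N$ with $\mu[\mathbf{y}]=\mu[\mathbf{x}]$ and $\mathbf{b}\in\mathbf{D}(\mathbf{y})$ such that $Q=\mu[(\mathbf{y},\mathbf{b})]$. The key step is to turn the equality $\mu[\mathbf{y}]=\mu[\mathbf{x}]$ into a permutation statement: it means the tuples $(x_1,\dots,x_N)$ and $(y_1,\dots,y_N)$ agree as multisets, so there is a permutation $\sigma$ of $\{1,\dots,N\}$ with $y_i=x_{\sigma(i)}$ for all $i$. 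I would then define $\mathbf{a}$ by $a_{\sigma(i)}:=b_i$; this is well defined since $\sigma$ is a bijection, and $a_{\sigma(i)}=b_i\in D(y_i)=D(x_{\sigma(i)})$ shows $\mathbf{a}\in\mathbf{D}(\mathbf{x})$. Moreover $\mu[(\mathbf{x},\mathbf{a})]=\frac1N\sum_{i=1}^N\delta_{(x_{\sigma(i)},a_{\sigma(i)})}=\frac1N\sum_{i=1}^N\delta_{(y_i,b_i)}=Q$, so the computation from the first part, applied to this $\mathbf{a}$, yields $\mathbf{r}(\mathbf{x},\mathbf{a})=\int_D r(x,a,\mu[\mathbf{x}])\,Q(d(x,a))=\hat r(\mu[\mathbf{x}],Q)$.

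I expect the only genuine subtlety—hence the point that deserves an explicit sentence rather than a one-line dismissal—to be the passage from ``$\mu[\mathbf{y}]=\mu[\mathbf{x}]$ in $\mathbb{P}(S)$'' to ``$y_i=x_{\sigma(i)}$ for a permutation $\sigma$'': one must observe that two empirical measures of $N$ points coincide precisely when the underlying $N$-tuples are equal up to reordering with multiplicities, and that when some coordinates of $\mathbf{x}$ are repeated the permutation $\sigma$ is not unique; any admissible choice works, since changing $\sigma$ only reshuffles the coordinates of the resulting $\mathbf{a}$, which affects neither $\mathbf{r}(\mathbf{x},\mathbf{a})$ nor membership in $\mathbf{D}(\mathbf{x})$. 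No measurability question arises because the statement is pointwise in $\mathbf{x}$.
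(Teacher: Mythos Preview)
Your proof is correct and follows the same approach as the paper: set $Q:=\mu[(\mathbf{x},\mathbf{a})]$ for the forward direction and, for the converse, unpack the definition of $\hat D(\mu[\mathbf{x}])$. The paper dispatches the converse in one line (``By definition this implies that there exists $\mathbf{a}\in \mathbf{D}(\mathbf{x})$ s.t.\ $Q=\mu[(\mathbf{x},\mathbf{a})]$''), silently identifying the representing tuple $\mathbf{y}$ with $\mathbf{x}$; your explicit permutation argument is exactly the justification that line needs and makes the proof more complete without changing its substance.
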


\begin{proof}
Let $\mathbf{x}$ and $\mathbf{a}\in \mathbf{D}(\mathbf{x})$ be given and let $\mu := \mu[\mathbf{x}]\in \Pop_N(S)$.  Define the discrete point measure $Q$ on $D$ by
$$ Q  := \mu[(\mathbf{x},\mathbf{a})]. $$  
Then $Q\in \hat D (\mu)$ by construction and
\begin{eqnarray*}
\mathbf{r}(\mathbf{x},\mathbf{a}) &=& \frac1N\sum_{i=1}^N r(x_i,a_i, \mu) =\int_{D} r(x,a,\mu)Q(d(x,a))
\end{eqnarray*}
which proves the first statement. For the converse, suppose $Q\in \hat D (\mu[\mathbf{x}])$. By definition this implies that there exists $\mathbf{a}\in \mathbf{D}(\mathbf{x})$ s.t.\ $Q=\mu[(\mathbf{x},\mathbf{a})]$. Using this relation, \eqref{eq: lem31} follows.
\end{proof}

This lemma shows that instead of choosing actions  $\mathbf{a}\in \mathbf{D}(\mathbf{x})$  we can choose measures $Q\in \hat D (\mu[\mathbf{x}])$ and $\mu=\mu[\mathbf{x}]$ is a sufficient information which can replace the high dimensional state $\mathbf{x}\in S^N$. Intuitively  this is clear from the fact that $\mathbf{r}(\mathbf{x},\mathbf{a})$ is symmetric (see Remark \ref{rem:symmetry}).

We consider now a second MDP with the following data which we will call mean-field MDP (for short $\widehat{\rm MDP}$). The state space is $ \mathbb{P}_N(S)$ and the action space is $\mathbb{P}_N(D)$. 
The one-stage reward $\hat{r}: \hat D\to \R$ is given by the expression in Lemma \ref{lem:FG}, i.e.
\begin{eqnarray}
\label{eq:hatmu}
 \hat{r}(\mu,Q) &:=& \int_{D}  r(x,a,\mu) Q(d(x,a))
 \end{eqnarray}
 and the transition law $\hat{T}: \hat D \times \mathcal{Z}^{N+1} \to \mathbb{P}_N(S)$  for $Q=\mu[(\mathbf{x},\mathbf{a})], \mu=\mu[\mathbf{x}]$ by (the empty sum is zero)
\begin{eqnarray*}
 \hat{T}(\mu,Q,\mathbf{Z},Z^0)&=& \mu[ \mathbf{T}(\mathbf{x},\mathbf{a},\mu[\mathbf{x}],\mathbf{Z},Z^0)]
 \end{eqnarray*}
The value of $\hat T$ simply is the empirical measure of the new states after a random transition.
 A policy is here denoted by $\psi=(\varphi_0,\varphi_1,\ldots)$ with $\varphi_n\in \hat F$ and we denote by $(\mu_n)$ the corresponding (random) sequence of empirical measures, i.e. $\mu_0=\mu$, and for $n\in\N_0$
 $$ \mu_{n+1} =  \hat{T}(\mu_n,\varphi_n(\mu_n),\mathbf{Z}_{n+1},Z_{n+1}^0).$$  
  
 \begin{remark}\label{rem:Q}
We define an action as a joint probability distribution $Q$ on state and action combinations instead of the conditional distribution on actions given the state. Both descriptions are equivalent, since for $Q\in \hat D(\mu)$ we can disintegrate $$Q(B)=\int_B \bar Q(da|x)\mu(dx),\; B \in \mathcal{B}(D)$$
where $\bar Q$ is the regular conditional probability. For short: $Q=\mu \otimes \bar Q$.
The advantage of using the joint distribution is that we have one object to define actions in all states. The disadvantage is that we need to formulate the restriction that the marginal distribution on the states coincides with $\mu$.
\end{remark}
 
 We define  the value function of $\widehat{\rm MDP}$  in the usual way for state $\mu\in \mathbb{P}_N(S)$ and policy $\psi=(\varphi_0,\varphi_1,\ldots)$ by
\begin{eqnarray}
J_\psi^N(\mu) &:=& \sum_{k=0}^\infty \beta^k \Eop_\mu^\psi\big[\hat{r}(\mu_k,\varphi_k)\big].\\
J^N(\mu)&:=&\sup_\psi J^N_\psi(\mu).
\end{eqnarray}

Finally, we show that the MDP and the mean-field MDP are equivalent. 

\begin{theorem}\label{thm:equality}
Assume (A0)-(A3). For $\mathbf{x}\in S^N$ and $\mu=\mu[\mathbf{x}]$ we have:
$$ V^N(\mathbf{x})=J^N(\mu).$$
\end{theorem}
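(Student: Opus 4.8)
The plan is to show the two value functions agree by showing that each policy in one model can be matched by a policy in the other that yields the same expected reward, starting from the correspondence $\mathbf{x}\mapsto\mu[\mathbf{x}]$. The key structural facts are Lemma~\ref{lem:FG} (admissible actions $\mathbf{a}\in\mathbf{D}(\mathbf{x})$ correspond to admissible measures $Q\in\hat D(\mu[\mathbf{x}])$ with matching one-stage reward) and the definition of $\hat T$ as the push-forward $\mu[\mathbf{T}(\cdots)]$, which makes the empirical-measure process of the $N$-individuals MDP coincide in law with the state process of the $\widehat{\rm MDP}$ under corresponding policies. Since by Theorem~\ref{thm:model1} and the analogous result for the $\widehat{\rm MDP}$ both value functions satisfy their respective optimality equations and are obtained by value iteration from $0$, it suffices to compare the two $U$-operators through the map $\mathbf{x}\mapsto\mu[\mathbf{x}]$.

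First I would establish the inequality $V^N(\mathbf{x})\le J^N(\mu[\mathbf{x}])$. Take an arbitrary policy $\pi=(f_0,f_1,\ldots)$ for the MDP. Using Remark~\ref{rem:symmetry} and Lemma~\ref{lem:FG}, at each stage the decision rule $f_n$ together with the current state induces an empirical measure $Q_n=\mu[(\mathbf{X}_n,f_n(\mathbf{X}_n))]\in\hat D(\mu[\mathbf{X}_n])$ with $\mathbf{r}(\mathbf{X}_n,f_n(\mathbf{X}_n))=\hat r(\mu[\mathbf{X}_n],Q_n)$, and this assignment depends on $\mathbf{X}_n$ only through $\mu[\mathbf{X}_n]$ up to permutation; by the symmetry observation one can choose a measurable selection $\varphi_n\in\hat F$ realizing it, so that $\psi=(\varphi_0,\varphi_1,\ldots)$ is a policy for the $\widehat{\rm MDP}$. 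Since $\hat T$ is exactly the law of the empirical measure of $\mathbf{T}$, the process $(\mu[\mathbf{X}_n])$ under $\Pop_{\mathbf{x}}^\pi$ has the same distribution as $(\mu_n)$ under $\Pop_{\mu[\mathbf{x}]}^\psi$, hence $V_\pi^N(\mathbf{x})=J_\psi^N(\mu[\mathbf{x}])\le J^N(\mu[\mathbf{x}])$, and taking the supremum over $\pi$ gives one direction. For the reverse inequality $J^N(\mu[\mathbf{x}])\le V^N(\mathbf{x})$, I would run the converse half of Lemma~\ref{lem:FG}: given a policy $\psi$ for the $\widehat{\rm MDP}$, at each stage $\varphi_n(\mu_n)\in\hat D(\mu_n)$ can be realized as $\mu[(\mathbf{X}_n,\mathbf{a}_n)]$ for some admissible $\mathbf{a}_n\in\mathbf{D}(\mathbf{X}_n)$, yielding (again by a measurable selection argument) a policy $\pi$ for the MDP with matching rewards and matching empirical-measure dynamics, so $J_\psi^N(\mu[\mathbf{x}])=V_\pi^N(\mathbf{x})\le V^N(\mathbf{x})$.

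The main obstacle, and the point I would treat with care, is the \emph{measurability of the selections}: one needs that the stagewise choice $\mu\mapsto\varphi_n(\mu)$ in the first direction, and $\mu_n\mapsto\mathbf{a}_n$ in the second, can be made measurable. For the first direction this is genuinely delicate only if one insists on a canonical reconstruction; an alternative cleaner route, which I would prefer, is to bypass policies entirely and argue purely on the level of the dynamic programming operators. Concretely, define $\Phi:\mathbb{M}\to\hat{\mathbb{M}}$ (bounded usc functions on $\mathbb{P}_N(S)$) implicitly via symmetry, show $\mathbf{x}\mapsto\mu[\mathbf{x}]$ intertwines $U$ with the $\widehat{\rm MDP}$-operator $\hat U$ — i.e. $(U v)(\mathbf{x})$ depends on $\mathbf{x}$ only through $\mu[\mathbf{x}]$ whenever $v$ does, and equals $(\hat U\hat v)(\mu[\mathbf{x}])$ where $\hat v$ is the induced function — using Lemma~\ref{lem:FG} to rewrite the supremum over $\mathbf{a}\in\mathbf{D}(\mathbf{x})$ as a supremum over $Q\in\hat D(\mu[\mathbf{x}])$ and the definition of $\hat T$ to rewrite the expectation. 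Then $U^n 0$ depends on $\mathbf{x}$ only through $\mu[\mathbf{x}]$ and equals $\hat U^n 0$ composed with $\mathbf{x}\mapsto\mu[\mathbf{x}]$; passing to the limit with Theorem~\ref{thm:model1}(b) and its counterpart for the $\widehat{\rm MDP}$ gives $V^N(\mathbf{x})=J^N(\mu[\mathbf{x}])$. This operator-level argument sidesteps the selection issue because it never constructs an explicit policy; the only thing to verify is that the set of $Q$ ranged over, $\hat D(\mu[\mathbf{x}])$, is precisely the image of $\mathbf{D}(\mathbf{x})$ under $\mathbf{a}\mapsto\mu[(\mathbf{x},\mathbf{a})]$, which is immediate from the definitions, and that the value iteration from $0$ in the $\widehat{\rm MDP}$ converges to $J^N$, which holds by the same MDP theory cited for Theorem~\ref{thm:model1}.
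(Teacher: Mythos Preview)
Your proposal is correct, and your preferred operator-level route is genuinely different from what the paper does. The paper also starts from the policy-matching idea you sketch first, but it resolves the measurability concern in a way you did not mention: rather than constructing a \emph{Markovian} policy in the other model via measurable selection, it simply observes that the pathwise coupling (same noise sequence $(\mathbf{Z}_n,Z_n^0)$, Lemma~\ref{lem:FG} applied at each stage) produces a possibly \emph{history-dependent} sequence of admissible actions in the other model with identical reward stream pathwise, and then invokes the standard MDP fact that the supremum over history-dependent policies equals the supremum over Markovian ones. This one-line trick sidesteps measurable selection entirely, since a history-dependent action sequence is automatically an admissible policy in the larger class.

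Your operator-intertwining argument via $U^n 0(\mathbf{x}) = \hat U^n 0(\mu[\mathbf{x}])$ is a clean alternative the paper does not take. One caution on logical ordering: in the paper the convergence $\hat U^n 0 \to J^N$ (Theorem~\ref{theo:JN}(b)) is itself \emph{deduced from} Theorem~\ref{thm:equality}, so if you follow your route you must establish value iteration for $\widehat{\rm MDP}$ independently---i.e.\ check compactness of $\hat D(\mu)$, upper semicontinuity of $(\mu,Q)\mapsto\hat r(\mu,Q)$ and of the transition term, and contraction of $\hat U$---before invoking it. This is routine (it is essentially the content of the proof of Theorem~\ref{theo:JN} minus the appeal to Theorem~\ref{thm:equality}), but the dependency must be made explicit. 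In summary: the paper's history-dependent-policy trick is shorter and needs no separate verification of the $\widehat{\rm MDP}$ structure; your operator argument is more self-contained in explaining \emph{why} only the empirical measure matters, at the cost of checking the $\widehat{\rm MDP}$ continuity/compactness hypotheses directly.
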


\begin{proof}
Note that $\mu_0=\mu=\mu[\mathbf{x}]$ by definition.
Let  $\mathbf{a}_0=\mathbf{a}\in \mathbf{D}(\mathbf{x})$ be the first action taken by MDP under an arbitrary policy. Then by  Lemma \ref{lem:FG} there exists $Q\in \hat D(\mu)$, s.t. $\mathbf{r}(\mathbf{x},\mathbf{a})=   \hat{r}(\mu,Q)$ and 
$$\mu[\mathbf{X}_{1}]=\mu[\mathbf{T}(\mathbf{x},\mathbf{a},\mu[\mathbf{x}], \mathbf{Z}_{1}, Z^0_{1})]=\hat{T}(\mu,Q, \mathbf{Z}_{1},Z^0_{1})=\mu_{1}.$$
By induction over time $n$ it follows that a sequence of states and feasible actions in MDP $(\mathbf{X}_0,\mathbf{A}_0,\mathbf{X}_1,\ldots)$ can be coupled with a sequence of states and feasible actions  $(\mu_0,Q_0,\mu_1,\ldots)$ for $\widehat{\rm MDP}$ and vice versa  s.t.\ the same sequence of disturbances $(\mathbf{Z}_n),(Z^0_n)$ is used and $  \mathbf{r}(\mathbf{X}_n,\mathbf{A}_n) = \hat{r}(\mu_n,Q_n)$
pathwise. The corresponding policies may be history-dependent, but $V^N=J^N$ follows since it is well-known for MDPs that the maximal value is obtained when we restrict our optimization to Markovian policies.
\end{proof}

As in Section \ref{sec:MFM} we define here a set and an operator for the mean-field MDP.

\begin{definition}
Let us define
\begin{itemize}
\item[a)] The set $\mathbb{\hat M}:= \{v: \mathbb{P}_N(S) \to \R \; |\; v \mbox{ is bounded and upper semicontinuous}\}$.
\item[b)] The operator $\hat U$ on $\mathbb{\hat M}$  by
\begin{eqnarray*}
\hat Uv(\mu) = (\hat Uv)(\mu) &:=& \sup_{Q\in \hat D(\mu)} \Big\{ \hat{r}(\mu,Q)+ \beta \Eop v( \hat{ T}(\mu,Q,\mathbf{Z},Z^0))\Big\}.
\end{eqnarray*}
\end{itemize}
\end{definition}

Due to Theorem \ref{thm:equality} and Theorem \ref{thm:model1} we obtain:

\begin{theorem}\label{theo:JN}
Assume (A0)-(A3). Then:
\begin{itemize}
\item[a)] The value function $J^N$ is the unique fixed point of the $\hat U$-operator in $\mathbb{\hat M}$ i.e. it satisfies the optimality equation $J^N= \hat U J^N$.
\item[b)]   $J^N = \lim_{n\to\infty } \hat U^n0$.
\item[c)] There exists a maximizer of $J^N$ and every maximizer $\varphi^*\in \hat F$ of $J^N$ defines an optimal stationary  policy $(\varphi^*,\varphi^*,\ldots)$.
\end{itemize}
\end{theorem}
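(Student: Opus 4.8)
The strategy is to transfer every assertion from the $N$-individuals model (Theorem~\ref{thm:model1}) down to $\widehat{\rm MDP}$ via the identity $V^N=J^N\circ\mu[\cdot]$ of Theorem~\ref{thm:equality} together with the action dictionary of Lemma~\ref{lem:FG}. The first thing I would record is an \emph{intertwining identity}: for every bounded measurable $v:\mathbb{P}_N(S)\to\R$,
$$ U\big(v\circ\mu[\cdot]\big)(\mathbf{x})=\big(\hat U v\big)\big(\mu[\mathbf{x}]\big),\qquad \mathbf{x}\in S^N. $$
Indeed, by Lemma~\ref{lem:FG} (and its proof) we have $\mathbf{r}(\mathbf{x},\mathbf{a})=\hat r\big(\mu[\mathbf{x}],\mu[(\mathbf{x},\mathbf{a})]\big)$, the empirical measure of $\mathbf{T}(\mathbf{x},\mathbf{a},\mu[\mathbf{x}],\mathbf{Z},Z^0)$ equals $\hat T\big(\mu[\mathbf{x}],\mu[(\mathbf{x},\mathbf{a})],\mathbf{Z},Z^0\big)$, and $\{\mu[(\mathbf{x},\mathbf{a})]:\mathbf{a}\in\mathbf{D}(\mathbf{x})\}=\hat D(\mu[\mathbf{x}])$; substituting these into the definition of $U$ and reindexing the supremum gives the identity. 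I would also note that $\mathbf{x}\mapsto\mu[\mathbf{x}]$ is a continuous surjection of $S^N$ onto $\mathbb{P}_N(S)$, so $\|g\circ\mu[\cdot]\|_\infty=\|g\|_\infty$ for every bounded $g$ on $\mathbb{P}_N(S)$.

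For part~(a), Theorem~\ref{thm:model1}(a) gives $V^N=UV^N$, hence by Theorem~\ref{thm:equality} and the intertwining identity $J^N(\mu[\mathbf{x}])=(\hat U J^N)(\mu[\mathbf{x}])$ for every $\mathbf{x}$, and surjectivity of $\mu[\cdot]$ upgrades this to $J^N=\hat U J^N$. To place $J^N$ in $\mathbb{\hat M}$ I only need upper semicontinuity, since $J^N$ is bounded because $r$ is: given $\mu_n\to\mu$ in $\mathbb{P}_N(S)$, pick $\mathbf{x}_n\in S^N$ with $\mu[\mathbf{x}_n]=\mu_n$; by compactness of $S^N$ every subsequence has a further subsequence $\mathbf{x}_{n_k}\to\mathbf{y}$, necessarily with $\mu[\mathbf{y}]=\mu$, and since $J^N(\mu_{n_k})=V^N(\mathbf{x}_{n_k})$ (Theorem~\ref{thm:equality}), upper semicontinuity of $V^N$ gives $\limsup_k J^N(\mu_{n_k})\le V^N(\mathbf{y})=J^N(\mu)$, whence $\limsup_n J^N(\mu_n)\le J^N(\mu)$. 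Uniqueness in $\mathbb{\hat M}$ is the Banach fixed point argument: $\hat U$ is a $\beta$-contraction on bounded functions in supremum norm, hence has at most one fixed point there.

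Part~(b) follows by iterating the intertwining identity, which yields $(\hat U^n 0)\circ\mu[\cdot]=U^n 0$ for all $n$ (base case $0\circ\mu[\cdot]=0$); combined with the norm identity above and $\|U^n 0-V^N\|_\infty\to0$ from Theorem~\ref{thm:model1}(b) this gives $\|\hat U^n 0-J^N\|_\infty=\|U^n 0-V^N\|_\infty\to0$. For part~(c) I would first exhibit a maximizer of $J^N$. Fix a maximizer $f^*\in F$ of $V^N$ (Theorem~\ref{thm:model1}(c)) and a measurable right inverse $s:\mathbb{P}_N(S)\to S^N$ of $\mu[\cdot]$ (which exists by a standard measurable selection theorem, $\mu[\cdot]$ being a continuous surjection of compact metric spaces), and set $\varphi^*(\mu):=\mu[(s(\mu),f^*(s(\mu)))]$, which lies in $\hat D(\mu)$ and is measurable in $\mu$. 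Evaluating $V^N=UV^N$ with the maximizer $f^*$ at the point $s(\mu)$ and rewriting each term by Theorem~\ref{thm:equality}, Lemma~\ref{lem:FG} and the definition of $\hat T$ turns it into
$$ J^N(\mu)=\hat r(\mu,\varphi^*(\mu))+\beta\,\Eop\big[J^N\big(\hat T(\mu,\varphi^*(\mu),\mathbf{Z},Z^0)\big)\big]; $$
by part~(a) this means $\varphi^*(\mu)$ attains the supremum defining $\hat U J^N(\mu)$, i.e.\ $\varphi^*$ is a maximizer of $J^N$. That \emph{every} maximizer $\varphi^*$ defines an optimal stationary policy $\psi^*=(\varphi^*,\varphi^*,\dots)$ is the standard verification: iterating the fixed point equation $n$ times along the Markov chain $(\mu_k)$ generated by $\varphi^*$ gives $J^N(\mu)=\sum_{k=0}^{n-1}\beta^k\Eop[\hat r(\mu_k,\varphi^*(\mu_k))]+\beta^n\Eop[J^N(\mu_n)]$, and letting $n\to\infty$ using boundedness of $J^N$ yields $J^N=J_{\psi^*}^N$, hence optimality.

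The step needing the most care is the upper semicontinuity of $J^N$ in~(a) and the existence of the measurable right inverse $s$ of $\mu[\cdot]$ in~(c) — these are exactly the places where one passes between the concrete product space $S^N$ and the abstract space $\mathbb{P}_N(S)$ of empirical measures — and both are handled by compactness of $S^N$ (respectively of $S^N\times A^N$ via (A0)) together with the symmetry already encoded in Theorem~\ref{thm:equality} and Remark~\ref{rem:symmetry}. Everything else is a formal consequence of the intertwining identity and of the already-established Theorems~\ref{thm:equality} and~\ref{thm:model1}, so no real difficulty is expected there.
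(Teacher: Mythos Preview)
Your argument is correct, but it takes a different route from the paper. The paper establishes directly that $\hat U:\mathbb{\hat M}\to\mathbb{\hat M}$ by verifying the structural hypotheses of the underlying MDP theory on $\widehat{\rm MDP}$ itself --- compactness and upper semicontinuity of $\mu\mapsto\hat D(\mu)$, upper semicontinuity of $(\mu,Q)\mapsto\hat r(\mu,Q)$ and of $(\mu,Q)\mapsto\Eop v(\hat T(\mu,Q,\mathbf{Z},Z^0))$ --- and then invokes Theorem~\ref{thm:model1} and Theorem~\ref{thm:equality} via the identification of policies, rewards and transitions. You instead bypass these structural checks entirely by means of the intertwining identity $U(v\circ\mu[\cdot])=(\hat U v)\circ\mu[\cdot]$ and pull every conclusion back from Theorem~\ref{thm:model1} through the surjection $\mu[\cdot]$: upper semicontinuity of $J^N$ comes from that of $V^N$ plus compactness of $S^N$, the fixed-point and convergence statements come from iterating the intertwining, and the maximizer is manufactured from $f^*$ via a measurable right inverse of $\mu[\cdot]$. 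Your approach is more economical and makes the equivalence of the two models very explicit, at the price of the (mild) measurable-selection step for the section $s$ and of not actually establishing that $\hat U$ preserves $\mathbb{\hat M}$; the paper's approach costs a few extra lines of verification but shows that $\widehat{\rm MDP}$ is a well-posed MDP in its own right, which is conceptually cleaner and is reused later when the analogous checks are done for $\widetilde{\rm MDP}$.
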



We summarize the model data below: \\

\hrule
\vspace*{0.2cm}

\begin{tabular}{ll}
{\bf Model $\widehat{\rm MDP}$ }& \\
State space & $\mathbb{P}_N(S):= \{ \mu\in \mathbb{P}(S)\;| \; \mu= \mu[\mathbf{x}], \mbox{ for } \mathbf{x}  \in S^N \} \ni \mu$ \\
Admissible actions &  $\hat D (\mu) :=\{   \mu[(\mathbf{x},\mathbf{a})] \;| \;  \mathbf{x}\in S^N \mbox{ s.t. } \mu[\mathbf{x}] =\mu \mbox{ and } \mathbf{a}\in D(\mathbf{x})\} \ni Q$\\
Transition function & $ \hat{T}(\mu,Q,\mathbf{Z},Z^0)= \mu[ \mathbf{T}(\mathbf{x},\mathbf{a},\mu[\mathbf{x}],\mathbf{Z},Z^0)]$\\
Reward & $  \hat{r}(\mu,Q) := \int_{D}  r(x,a,\mu) Q(d(x,a))$\\
Policy & $\psi=(\varphi_0,\varphi_1,\ldots),$ \\
& $\varphi_n\!\in\! \hat F\! :=\! \{ \varphi : \mathbb{P}_N(S) \to \mathbb{P}_N(D) \;| \;  \varphi \mbox{ meas., }  \varphi(\mu)\in \hat D(\mu),\; \forall \mu \in  \mathbb{P}_N(S) \} $
\end{tabular}
\hrule
\begin{example}\label{ex:running2}
We reconsider Example \ref{ex:running}. The given state and action translates in $\widehat{\rm MDP}$  to $\mu=\mu[\mathbf{x}]=(2/5,1/5,2/5)$ as distribution on $S=\{1,2,3\}$. The action is a distribution on $D=\{(1,2),(1,3),(2,1),(2,3),(3,1),(3,2)\}$ and translates into $Q=(1/5,1/5,1/5,0,1/5,1/5)$. The transition kernel mentioned in Remark \ref{rem:Q} in this example is given by $\bar Q(2|1)=\frac12,\bar Q(3|1)=\frac12,\bar Q(1|2)=1, \bar Q(3|2)=0, \bar Q(1|3)=\frac12, \bar Q(2|3)=\frac12.$  Obviously $ \hat{r}(\mu,Q) =2/5$.
\end{example}

\section{The Mean-Field Limit MDP}\label{sec:MFL}
In this section we let $N\to\infty$ in order to obtain some simplifications. This yields the so-called mean-field limit. 

We thus consider a third MDP, the  so-called limit MDP (denoted by $\widetilde{\rm MDP}$). We will later show that it will indeed appear to be the limit of the problems studied in the previous section. The limit MDP is defined by  the following data: The state space is $ \mathbb{P}(S)$ and the action space is $\mathbb{P}(D)$.  We define
\begin{eqnarray}
\tilde D(\mu) &:=& \{ Q \in \mathbb{P}(D) \:| \mbox{ the first margin of } Q \mbox{ is } \mu \},\, \mu\in \Pop(S)\\
\tilde D &:=& \{(\mu,Q)\: |\: \mu \in \mathbb{P}(S), Q\in \tilde D(\mu)\}.
\end{eqnarray}

The one-stage reward $\tilde{r}: \tilde D \to \R$ is given as in
\eqref{eq:hatmu}: 
$$ \tilde{r}(\mu,Q) := \int_{D}  r(x,a,\mu) Q(d(x,a)).$$
The transition function is defined by $\tilde{T}: \tilde D \times \mathcal{Z}\to \mathbb{P}(S)$ 
\begin{equation}\label{eq:tile_T}
\tilde{T}(\mu,Q,Z^0)(B) = \int_{ D}  p^{x,a,\mu,Z^0}(B)  Q(d(x,a))
\end{equation} 
where $p^{x,a,\mu,Z^0} (B):=\Pop(T(x,a,\mu,Z^i,Z^0)\in B |Z^0)$ with $B\in \mathcal{B}(S)$, is the conditional probability that the next state is in $B$, given $x,a,\mu$ and the common noise random variable $Z^0$.  

\begin{remark}
Recalling that $Q\in \tilde D(\mu)$ means  $Q=\mu \otimes \bar{Q}$, we can (with the help of the Fubini theorem)  instead of \eqref{eq:tile_T} equivalently write
\begin{eqnarray}
\tilde{T}(\mu,Q,Z^0)(B)  &=&  \int_D p^{x,a,\mu,Z^0}(B)  \bar{Q}(da|x) \mu(dx)\\
&=&  \int_S P^{\bar Q,\mu,Z^0}(B|x) \mu(dx) 
\end{eqnarray} 
where $P^{\bar Q,\mu,Z^0} (dx'|x)= \int_{D(x)} p^{x,a,\mu,Z^0}(dx') \bar{Q}(da|x)$. Hence $P^{\bar Q,\mu,Z^0}$ is the transition kernel which determines the distribution at the next stage. In general it depends on $\bar Q,\mu$ and the common noise $Z^0$. 
\end{remark} 

A decision rule is here  a measurable mapping $\varphi$ from $\mathbb{P}(S)$ to $\mathbb{P}(D)$ such that $\varphi(\mu)\in \tilde D(\mu)$ for all $\mu$. We denote by $\tilde F$ the set of all decision rules. Suppose that  $\psi=(\varphi_0,\varphi_1,\ldots)$ is a policy for the $\widetilde{\rm MDP}$. As in the previous section we set for $n\in \N_0$
\begin{eqnarray*}
\mu_0 &:=& \mu,\\
\mu_{n+1} &:=& \tilde T(\mu_n,\varphi_n(\mu_n), Z^0_{n+1})
\end{eqnarray*}
which yields the sequence of distributions of individuals. Note that it is deterministic if $\tilde T$ does not depend on the common noise $Z^0$.

Then we define for $\widetilde{\rm MDP}$ the following value functions for policy $\psi=(\varphi_0,\varphi_1,\ldots)$ and state $\mu\in \mathbb{P}(S)$
\begin{eqnarray}
\nonumber
 J_{\psi}(\mu) &=& \sum_{k=0}^\infty \beta^k \Eop_\mu^\psi[\tilde r(\mu_k, \varphi_k)], \\ \label{eq:mfl}
 J(\mu) &=& \sup_\psi  J_\psi(\mu).
\end{eqnarray}

Instead of (A2) we will now assume that

\begin{itemize}
\item[(A2')]  $ (x,a,\mu) \mapsto r(x,a,\mu)$ is continuous.
\end{itemize}

\begin{definition}
We define
\begin{itemize}
\item[a)] The set 
$\tilde{\mathbb{M}} := \{ v: \mathbb{P}(S) \to \R \; |\; v \mbox{ is continuous and bounded}\}$.
\item[b)] The maximal reward operator $\tilde U$ on $\tilde{\mathbb{M}} $  in this model   is 
\begin{eqnarray*}
\tilde U v(\mu)= (\tilde U v)(\mu) &:=& \sup_{Q\in \tilde D (\mu) } \Big\{ \tilde{r}(\mu,Q)+ \beta \Eop v(  \tilde{T}(\mu,Q,Z^0))\Big\}.
\end{eqnarray*}
\end{itemize}
\end{definition}

For the mean-field limit MDP we obtain:
 
\begin{theorem}\label{theo:MFLexi}
Assume (A0), (A1), (A2'), (A3). Then: 
\begin{itemize}
\item[a)] The  value function $J$ is the unique fixed point of the $\tilde U$-operator in $\tilde{\mathbb{M}}$, i.e.\ it satisfies the optimality equation $J= \tilde U J$.
\item[b)]  $J = \lim_{n\to\infty} \tilde U^n 0$. 
\item[c)] There exists a maximizer of $J$ and every maximizer $\varphi^*\in \tilde F$ of $J$ defines an optimal stationary deterministic policy $(\varphi^*,\varphi^*,\ldots)$.
\end{itemize}
\end{theorem}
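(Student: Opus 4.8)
The statement is the standard existence-and-characterization result for a discounted MDP with Borel state and action spaces, so the plan is to verify the hypotheses of the classical "structure theorem" for contracting MDPs (as in Bäuerle–Rieder, \emph{Markov Decision Processes}, Chapter 2) on the set $\tilde{\mathbb{M}}$ of bounded continuous functions. The three key verifications are: (i) $\tilde U$ maps $\tilde{\mathbb{M}}$ into itself; (ii) $\tilde U$ is a contraction on $\tilde{\mathbb{M}}$ with modulus $\beta$ with respect to the sup-norm, so it has a unique fixed point reached by iterating from $0$, and this fixed point equals $J$; and (iii) for every $v\in\tilde{\mathbb{M}}$ the supremum defining $\tilde U v(\mu)$ is attained, giving a measurable maximizer, which by the verification theorem yields an optimal stationary deterministic policy. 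Parts (b) and (c) of the theorem then follow from (i)--(iii) together with the general fact that iterating the optimality operator from $0$ converges to the value function and that a maximizer of $J$ is optimal.

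\emph{Self-map and continuity (step i).} Fix $v\in\tilde{\mathbb{M}}$. Boundedness of $\tilde U v$ is immediate since $r$ is bounded and $\beta<1$. For continuity I would argue as follows. First, $(\mu,Q)\mapsto \tilde r(\mu,Q)=\int_D r(x,a,\mu)\,Q(d(x,a))$ is continuous on $\tilde D$ by (A2') together with continuity of $\mu\mapsto r(\cdot,\cdot,\mu)$ uniformly on the compact set $D$ (here (A0) is essential). Second, $(\mu,Q,z^0)\mapsto \tilde T(\mu,Q,z^0)$ is continuous into $\mathbb{P}(S)$: by (A3) the maps $(x,a,\mu)\mapsto p^{x,a,\mu,z^0}$ are weakly continuous, and integrating a bounded continuous test function against $\tilde T(\mu,Q,z^0)$ gives $\int_D \Eop[g(T(x,a,\mu,Z^i,z^0))]\,Q(d(x,a))$, which is jointly continuous by the same compactness argument, hence $(\mu,Q)\mapsto \Eop\,v(\tilde T(\mu,Q,Z^0))$ is continuous. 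So the integrand $(\mu,Q)\mapsto \tilde r(\mu,Q)+\beta\Eop\,v(\tilde T(\mu,Q,Z^0))$ is continuous on $\tilde D$. Third — and this is the step I expect to be the main obstacle — I must show the set-valued map $\mu\mapsto \tilde D(\mu)$ is continuous (both upper and lower hemicontinuous) with compact values, so that taking the supremum over $Q\in\tilde D(\mu)$ preserves continuity (a Berge-type maximum theorem). Compactness of $\tilde D(\mu)$ follows from compactness of $D$ (Prokhorov) plus closedness of the marginal constraint. Upper hemicontinuity: if $\mu_n\to\mu$ and $Q_n\in\tilde D(\mu_n)$, then $(Q_n)$ is tight, so along a subsequence $Q_n\to Q$ weakly, and the first marginal of $Q$ is $\lim$ of the first marginals $\mu_n$, i.e. $\mu$; hence $Q\in\tilde D(\mu)$. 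Lower hemicontinuity: given $\mu_n\to\mu$ and $Q\in\tilde D(\mu)$ with disintegration $Q=\mu\otimes\bar Q$, one builds $Q_n:=\mu_n\otimes\bar Q$ (using a fixed measurable selection of the conditional kernel, and (A1) to guarantee $\bar Q(\cdot\,|x)$ can be taken supported on $D(x)$ along a sequence) and checks $Q_n\to Q$ weakly; here (A1) (upper semicontinuity of $x\mapsto D(x)$) plus the assumed measurable selector $f:S\to A$ of $D$ does the work, though some care is needed because $D(x)$ varies with $x$. With $\mu\mapsto\tilde D(\mu)$ continuous and compact-valued and the integrand continuous, Berge's maximum theorem gives $\tilde U v\in\tilde{\mathbb{M}}$ and the existence of a measurable maximizer (via a measurable selection theorem).

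\emph{Contraction, fixed point, and optimality (steps ii--iii).} Since $r$ is bounded and $\beta<1$, $\tilde U$ is a $\beta$-contraction on the Banach space $(\tilde{\mathbb{M}},\|\cdot\|_\infty)$ by the usual estimate $\|\tilde U v-\tilde U w\|_\infty\le\beta\|v-w\|_\infty$, so Banach's fixed point theorem gives a unique fixed point $v^*\in\tilde{\mathbb{M}}$ with $v^*=\lim_n\tilde U^n 0$. That $v^*=J$ is the standard verification argument: iterating the optimality inequality shows $J_\psi\le v^*$ for every policy $\psi$, hence $J\le v^*$; conversely, taking a stationary policy $(\varphi^*,\varphi^*,\dots)$ generated by a maximizer $\varphi^*\in\tilde F$ of $v^*$ (which exists by step i) and unrolling the fixed-point equation, with the $\beta^n$-tail vanishing because $v^*$ is bounded, gives $J_{(\varphi^*,\varphi^*,\dots)}=v^*$, so $J\ge v^*$. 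This proves (a), and along the way it proves (c): the maximizer $\varphi^*$ of $J=v^*$ defines an optimal stationary deterministic policy. Statement (b) is the fixed-point iteration identity just noted. The only genuinely technical point in the whole argument is the hemicontinuity of $\mu\mapsto\tilde D(\mu)$ under (A0)--(A1); everything else is routine once that and the continuity of $\tilde r$ and $\tilde T$ are in hand.
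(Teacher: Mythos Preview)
Your proposal is correct and follows essentially the same route as the paper's own proof: both verify the hypotheses of the B\"auerle--Rieder structure theorem (Theorem~7.3.5 there) by establishing compactness and continuity (upper and lower hemicontinuity) of $\mu\mapsto\tilde D(\mu)$, joint continuity of $\tilde r$ and of $(\mu,Q)\mapsto\Eop\,v(\tilde T(\mu,Q,Z^0))$ via the same two-term splitting and uniform-convergence-on-compacta argument, and then invoke the $\beta$-contraction. The paper likewise builds $Q_n:=\mu_n\otimes\bar Q$ for lower hemicontinuity and appeals to the Berge/measurable-selection package (Proposition~2.4.8 in the same reference), exactly as you outline.
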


\begin{remark}
We can use the established solution methods like value iteration, policy iteration,  linear programmes or reinforcement learning to numerically solve the limit MDP (\cite{BT,CHFM,P07}). 
\end{remark}

The limit problem can be seen as a problem which approximates the original model when $N$ is large. In order to proceed, we need a more restrictive assumption than (A3)

\begin{itemize}
\item[(A3')] $\mathcal{Z}$ is compact and $ (x,a,\mu,z,z_0) \mapsto T(x,a,\mu,z,z_0)$ is continuous.
\end{itemize}

\begin{remark}
The assumption that $\mathcal{Z}$ is compact is not a strong assumption. Indeed, w.l.o.g.\ we may choose the disturbances to be uniformly distributed over $[0,1].$ This is because if  for example $\mathcal{Z}=\R$ and  $F$ is the distribution function of $Z$ we get $Z\stackrel{d}{=} F^{-1}(U)$ with $U\sim U([0,1])$ and $F^{-1}$ is then part of the transition function.
\end{remark}

Then it is possible to prove the following limit result.  

\begin{theorem}\label{thm:limit1}
Assume (A0), (A1), (A2') and  (A3'). Let $\mu^N_0\Rightarrow \mu_0$ for $N\to\infty$  where $\mu_0^N \in \mathbb{P}_N(S)$.  Then 
\begin{itemize}
\item[a)] $\limsup_{N\to\infty} J^N(\mu^N_0)= J(\mu_0)$.
\item[b)] Suppose $\varphi^*$ is a maximizer  of $J$. Then  it is possible to construct  (possibly history-dependent) policies $\psi^N= (\varphi^N_0,\varphi^N_1,\ldots)$  for $\widehat{\rm MDP}$ s.t.\ $\lim_{N\to\infty} J^N_{\psi^N}(\mu_0^N)=J(\mu_0)$.
\end{itemize} 
\end{theorem}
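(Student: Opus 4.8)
The plan is to prove both parts by exploiting the value iteration characterizations from Theorems \ref{theo:JN} and \ref{theo:MFLexi} together with a convergence result for the one-step operators $\hat U$ and $\tilde U$. The key technical input is the following: under (A0), (A1), (A2'), (A3'), if $v_N \in \hat{\mathbb M}$ and $v \in \tilde{\mathbb M}$ satisfy $v_N(\mu^N) \to v(\mu)$ whenever $\mu^N \Rightarrow \mu$ with $\mu^N \in \mathbb{P}_N(S)$, then $\hat U v_N(\mu^N) \to \tilde U v(\mu)$ for $\mu^N \Rightarrow \mu$. This is where the main work sits, so let me describe it. One needs to show (i) $\tilde D(\mu)$ varies continuously (in the Kuratowski/Painlev\'e sense) with $\mu$ and that $\hat D(\mu^N)$ "converges" to $\tilde D(\mu)$ as $\mu^N \Rightarrow \mu$ — concretely, any $Q^N \in \hat D(\mu^N)$ has weak accumulation points, all lying in $\tilde D(\mu)$ (this uses compactness of $D$ from (A0), hence tightness), and conversely any $Q \in \tilde D(\mu)$ can be approximated by empirical $Q^N \in \hat D(\mu^N)$ (sample $N$ points; here one uses that $\mu^N$ is itself empirical and a coupling/rearrangement argument, invoking (A1) to keep the constructed $Q^N$ feasible); (ii) $(\mu,Q) \mapsto \hat r(\mu,Q) = \int_D r(x,a,\mu)\,Q(d(x,a))$ and $(\mu,Q)\mapsto \tilde r(\mu,Q)$ are continuous, which follows from (A2') plus boundedness and the compactness of $D$; (iii) the transition terms converge, i.e. $\Eop\, v_N(\hat T(\mu^N,Q^N,\mathbf{Z},Z^0)) \to \Eop\, v(\tilde T(\mu,Q,Z^0))$ when $Q^N \Rightarrow Q$ — this is the heart of the "mean-field limit", where the idiosyncratic noise averages out: by the Glivenko–Cantelli / law-of-large-numbers argument the empirical measure $\hat T(\mu^N,Q^N,\mathbf{Z},Z^0) = \frac1N\sum \delta_{T(x_i,a_i,\mu^N,Z^i,Z^0)}$ converges weakly, conditionally on $Z^0$, to $\tilde T(\mu,Q,Z^0)$ almost surely, using continuity of $T$ from (A3'); then dominated convergence (values are uniformly bounded) together with the hypothesis on $v_N$ gives the claim. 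A standard argument then upgrades pointwise/sequential convergence of integrands-plus-constraint-sets to convergence of the suprema, using upper semicontinuity for the $\limsup \le$ direction and the approximation of feasible $Q$'s for the $\liminf \ge$ direction.

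Granting this operator-convergence lemma, part (a) follows by induction on the value-iteration step $n$. Let $J^{N,n} := \hat U^n 0$ and $J^{(n)} := \tilde U^n 0$. The base case $n=0$ is trivial ($0 \to 0$). The induction step is exactly the lemma: if $J^{N,n}(\mu^N) \to J^{(n)}(\mu)$ for all $\mu^N \Rightarrow \mu$, then $J^{N,n+1}(\mu^N) = \hat U J^{N,n}(\mu^N) \to \tilde U J^{(n)}(\mu) = J^{(n+1)}(\mu)$. To pass from the iterates to the value functions one needs a geometric error bound uniform in $N$: since rewards are bounded by some $\|r\|_\infty =: R$ and $\beta \in (0,1)$, one has $\|J^N - J^{N,n}\|_\infty \le \beta^n R/(1-\beta)$ and likewise $\|J - J^{(n)}\|_\infty \le \beta^n R/(1-\beta)$, both bounds independent of $N$. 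Hence for $\mu_0^N \Rightarrow \mu_0$,
\begin{align*}
\limsup_{N\to\infty} |J^N(\mu_0^N) - J(\mu_0)|
&\le \limsup_{N\to\infty}|J^N(\mu_0^N) - J^{N,n}(\mu_0^N)| \\
&\quad + \limsup_{N\to\infty}|J^{N,n}(\mu_0^N) - J^{(n)}(\mu_0)| + |J^{(n)}(\mu_0) - J(\mu_0)| \\
&\le \frac{2\beta^n R}{1-\beta},
\end{align*}
and letting $n\to\infty$ gives $\lim_{N} J^N(\mu_0^N) = J(\mu_0)$, which is stronger than the $\limsup$ statement claimed.

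For part (b), the idea is to mimic the optimal limiting decision rule $\varphi^*$ inside the $N$-individual model with a controlled error. Fix $\varepsilon > 0$ and choose $n$ with $2\beta^n R/(1-\beta) < \varepsilon$; it suffices to track the policy over the first $n$ stages and act arbitrarily afterwards. At stage $k$, when the $\widehat{\rm MDP}$ is in state $\mu_k^N \in \mathbb{P}_N(S)$, the limiting maximizer prescribes $\varphi^*(\mu_k^N) \in \tilde D(\mu_k^N)$, which need not be an empirical measure; so we pick $\varphi_k^N(\mu_k^N) \in \hat D(\mu_k^N)$ to be an $N$-point empirical approximation of $\varphi^*(\mu_k^N)$ — existence and weak convergence of such approximations is precisely the "$\hat D(\mu^N) \to \tilde D(\mu)$" construction from step (i) above, and because the approximation depends on the realized state the resulting policy is history-dependent, as the statement allows. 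One then shows by a finite induction (again using the continuity of $\hat r,\tilde r$ and the averaging of the idiosyncratic noise from step (iii)) that the realized empirical measures under $\psi^N$ track the deterministic-or-$Z^0$-driven trajectory of $\widetilde{\rm MDP}$ under $\varphi^*$, and that the accumulated reward over stages $0,\dots,n-1$ converges to the corresponding sum for $\widetilde{\rm MDP}$, which is within $\beta^n R/(1-\beta)$ of $J(\mu_0)$. Combining with part (a) (which gives $J^N_{\psi^N}(\mu_0^N) \le J^N(\mu_0^N) \to J(\mu_0)$ as the upper bound), we get $\lim_N J^N_{\psi^N}(\mu_0^N) = J(\mu_0)$.

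I expect the main obstacle to be step (i)–(iii) of the operator-convergence lemma, specifically the simultaneous handling of the moving feasible sets $\hat D(\mu^N)$ and the passage to the limit under the expectation in the transition term. Making the "empirical approximation of an arbitrary $Q \in \tilde D(\mu)$ by some $Q^N \in \hat D(\mu^N)$" precise while respecting the state-marginal constraint and the admissibility constraint $a \in D(x)$ (which is only upper semicontinuous, by (A1)) is the delicate point; one likely routes this through the regular conditional kernel $\bar Q$ of Remark \ref{rem:Q} and a measurable-selection argument, and the convergence-under-expectation is the place where (A3') (joint continuity of $T$ and compactness of $\mathcal Z$) is genuinely needed rather than just (A3). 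The appendix convergence result mentioned in the introduction is presumably exactly this lemma.
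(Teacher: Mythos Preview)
Your proposal is correct but takes a genuinely different route from the paper's proof. You package the mean-field limit as an operator-convergence lemma ($\hat U v_N(\mu^N)\to\tilde U v(\mu)$ whenever $v_N(\mu^N)\to v(\mu)$ along $\mu^N\Rightarrow\mu$) and then iterate along the value-iteration scheme, using the uniform-in-$N$ contraction bound $\|J^N-\hat U^n0\|_\infty\le\beta^n R/(1-\beta)$ to close the argument. The paper instead argues directly at the level of trajectories: for the upper bound it fixes an arbitrary policy $\psi^N$, extracts weakly convergent subsequences of the realized action measures $Q_k^N$ stage by stage, and identifies the limit as an admissible state--action sequence for $\widetilde{\rm MDP}$ (after truncating to finitely many stages and bounding the tail); the lower bound is obtained by the approximating-policy construction, essentially as in your part (b). Both approaches rest on the same core ingredient---the Glivenko--Cantelli step showing $\hat T(\mu^N,Q^N,\mathbf{Z},Z^0)\Rightarrow\tilde T(\mu,Q,Z^0)$ a.s.\ when $Q^N\Rightarrow Q$---and on the discretization of the kernel $\bar Q$ to produce feasible $Q^N\in\hat D(\mu^N)$ approximating a given $Q\in\tilde D(\mu)$, which you correctly flag as the delicate point. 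Your route is more structural, yields $\lim_N J^N(\mu_0^N)=J(\mu_0)$ in one stroke, and fits neatly into the MDP framework the paper advocates; the paper's trajectory argument avoids formulating the full operator lemma (in particular the lower-semicontinuity half of the Kuratowski convergence of $\hat D(\mu^N)$ is only needed in Step~3, not for the upper bound) and keeps the construction of $\psi^N$ more concrete. Two small remarks: the paper's proof also delivers the full limit, not just the $\limsup$, since Step~3 gives $\liminf_N J^N(\mu_0^N)\ge J(\mu_0)$; and the appendix result you anticipated is only the elementary uniform-convergence Lemma~\ref{lem:unifconf}, not your operator-convergence lemma---the paper assembles everything from that basic tool directly.
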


In particular the proof of  part b) shows how to obtain an $\varepsilon$-optimal policy for the model with $N$ individuals ($N$ large) when we know the optimal policy for the limit MDP.

\begin{remark}\label{rem:randomized}
\begin{itemize}
\item[a)] In case there is no common noise,  $\widetilde{\rm MDP}$ is completely deterministic.  The optimality equation then reads
\begin{eqnarray}\label{eq:JQT}
J(\mu) &=& \sup_{Q\in \tilde D(\mu)} \Big\{ \tilde r(\mu,Q)+\beta J(\tilde T(\mu,Q))\Big\} 
\end{eqnarray} 
where $\tilde T(\mu,Q)(B) = \int p^{x,a,\mu}(B) Q(d(x,a))$ with $p^{x,a,\mu}(B) = \Pop(T(x,a,\mu,Z)\in B)$.

\item[b)] If there is no common noise and $r$ and $T$ do not depend on $\mu$,  we obtain as a special case a standard MDP. The usual optimality equation for this MDP (for one individual)  would be
\begin{equation}\label{eq:vistandard}
 V(x) = \sup_{a\in D(x)} \left\{ r(x,a)+ \beta \Eop V(T(x,a,Z))\right\},\; x\in S
\end{equation}
where $V(x) = \sup_\pi \sum_{k=0}^\infty \beta^k \Eop_x^\pi[r(X_k^i,A_k^i)]$.
The results in this paper  show that 
 we can equivalently consider $\widehat{\rm MDP}$ which implies the optimality equation \eqref{eq:JQT}.  It is possible to show by induction that the relation between both value functions is given by
$J(\mu) =  \int V(x)\mu(dx)$. Moreover, a maximizer of $J$ is given by  $\varphi^*(\mu)=\mu \otimes \bar Q^*$ with $\bar Q^*(\cdot|x)= \delta_{f^*(x)}$ for some $f^*: S\to A$ with $f^*(x)\in D(x)$ and $f^*$ is a  maximizer of $V$.
Here the choice of the conditional distribution $\bar Q^*$ does not depend on $\mu$ and is concentrated on a single action.
\item[c)] The policy $\psi^N$ which is constructed in Theorem \ref{thm:limit1} is {\em deterministic} but has the disadvantage that individuals have to communicate. Another possibility is to choose $Q_0^N$ as an empirical measure of $Q_0^*$ given $\mu_0^N$. This means if $Q_0^* = \mu_0 \otimes \bar Q^*$ and $\mu_0^N = \mu[\mathbf{x}^N]$ then simulate for all $x_i^N$ actions $a_i^N$ according to the kernel $\bar Q^*$. This is then a {\em randomized} policy but has the advantage that every individual can do this on its own without having the information about the other states and actions. This is then a decentralized control, i.e. $f^i(\mathbf{x})=f^i(x_i)$. Also the speed of the convergence in Theorem \ref{thm:limit1}  depends on the chosen approximation method.
\end{itemize}
\end{remark}

We summarize the model data below: \\

\hrule
\vspace*{0.2cm}

\begin{tabular}{ll}
{\bf Model $\widetilde{\rm MDP}$ }& \\
State space & $\mathbb{P}(S) \ni \mu$ \\
Admissible actions &  $\tilde D (\mu) :=\{ Q \in \mathbb{P}(D) \:| \mbox{ the first margin of } Q \mbox{ is } \mu \}\ni Q$\\
Transition function & $ \tilde{T}(\mu,Q,Z^0)(B) = \int_{ D}  p^{x,a,\mu,Z^0}(B)  Q(d(x,a))$ where\\
&  $p^{x,a,\mu,Z^0} (B):=\Pop(T(x,a,\mu,Z^i,Z^0)\in B |Z^0)$ \\
Reward & $   \tilde{r}(\mu,Q) := \int_{D}  r(x,a,\mu) Q(d(x,a))$\\
Policy & $\psi=(\varphi_0,\varphi_1,\ldots),$ \\
& $\varphi_n\in \tilde F := \{ \varphi : \mathbb{P}(S) \to \mathbb{P}(D) \;| \;  \varphi \mbox{ meas., }  \varphi(\mu)\in \tilde D(\mu),\; \forall \mu \in  \mathbb{P}(S) \} $
\end{tabular}
\hrule

\begin{example}\label{ex:running2}
We reconsider Example \ref{ex:running}. In $\widetilde{\rm MDP}$ a state can be any distribution on $S$, e.g. $\mu=(\pi^{-1},0,1-\pi^{-1})$. An  action is a distribution on $D=\{(1,2),(1,3),(2,1),(2,3),(3,1),(3,2)\}$ s.t.\ the first margin is $\mu$. For example $Q=(\pi^{-1},0,0,0,3/4(1-\pi^{-1}),1/4(1-\pi^{-1}))$.  Here $ \tilde{r}(\mu,Q)  =\pi^{-1}$.
\end{example}

\section{Average Reward Optimality}\label{sec:ARO}
In this section we consider the problem of finding the maximal average reward of the mean-field limit problem $\widetilde{\rm MDP}$. So suppose an $\widetilde{\rm MDP}$ as in the previous section (equation \eqref{eq:mfl}) is given. For a fixed policy $\psi=(\varphi_1,\varphi_2,\ldots)$ define
\begin{equation}\label{eq:AR}
\liminf_{n\to\infty} \frac1n \sum_{k=0}^{n-1} \Eop_\mu^\psi [\tilde r (\mu_k,\varphi_k)] =: G_\psi(\mu).
\end{equation}
The problem is to find $G(\mu) := \sup_\psi G_\psi(\mu)$ for all $\mu\in \Pop(S)$. We will construct the solution via the {\em vanishing discount approach}, see e.g. \cite{sen09,schal93,HLL,b01}. This has the advantage that we get a statement about the approximation of the $\beta$-discounted problem by the average reward problem immediately. For this purpose we denote by $J^\beta, J^\beta_ \psi$ the value functions of the discounted reward problem $\widetilde{\rm MDP}$ of the previous section in order to stress that they depend on the discount factor $\beta$.  

We first note that the following Tauber Theorem holds (see e.g. \cite{sen09}, Th. A.4.2):

\begin{lemma}\label{lem:Tauber}
For arbitrary $\mu\in \mathbb{P}(S)$ and policy  $\psi=(\varphi_0,\varphi_1,\ldots)$ we have
\begin{eqnarray*}
&&\liminf_{n\to\infty} \frac1n \sum_{k=0}^{n-1} \Eop_\mu^\psi [\tilde r (\mu_k,\varphi_k)]  = G_\psi(\mu) \le \liminf_{\beta\uparrow 1} (1-\beta) J_\psi^\beta(\mu)\\
&\le & \limsup_{\beta\uparrow 1} (1-\beta) J_\psi^\beta(\mu)\le
\limsup_{n\to\infty} \frac1n \sum_{k=0}^{n-1} \Eop_\mu^\psi [\tilde r (\mu_k,\varphi_k)]  <\infty
\end{eqnarray*}
\end{lemma}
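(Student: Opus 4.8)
The statement is the classical Tauberian chain for (generalized) limits of Cesàro and Abel averages, specialized to the value functions of the $\widetilde{\rm MDP}$. The plan is to reduce everything to a purely analytic fact about bounded real sequences: if $(c_k)_{k\ge 0}$ is a bounded sequence of reals (here $c_k := \Eop_\mu^\psi[\tilde r(\mu_k,\varphi_k)]$, which is bounded because $r$ is bounded), then
\begin{equation*}
\liminf_{n\to\infty}\frac1n\sum_{k=0}^{n-1} c_k \;\le\; \liminf_{\beta\uparrow 1}(1-\beta)\sum_{k=0}^\infty \beta^k c_k \;\le\; \limsup_{\beta\uparrow 1}(1-\beta)\sum_{k=0}^\infty \beta^k c_k \;\le\; \limsup_{n\to\infty}\frac1n\sum_{k=0}^{n-1} c_k .
\end{equation*}
The middle inequality is trivial, so only the two outer ones need work, and by symmetry (replace $c_k$ by $-c_k$) it suffices to prove the rightmost one. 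This is exactly Theorem A.4.2 in \cite{sen09}, which I would simply cite; but if a self-contained argument is wanted, the standard route is summation by parts.

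The key steps, in order: First, observe $\tilde r$ is bounded (since $r$ is bounded and $Q$ is a probability measure, $|\tilde r(\mu,Q)|\le \|r\|_\infty$), hence $c_k$ is a bounded sequence, hence $J_\psi^\beta(\mu)=\sum_{k\ge 0}\beta^k c_k$ converges absolutely for $\beta\in(0,1)$ and $(1-\beta)J_\psi^\beta(\mu)$ stays bounded as $\beta\uparrow 1$; this already gives the final ``$<\infty$''. Second, set $s_n := \sum_{k=0}^{n-1} c_k$ and $L := \limsup_{n\to\infty} s_n/n$; fix $\varepsilon>0$ and choose $n_0$ with $s_n \le (L+\varepsilon)n$ for all $n\ge n_0$. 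Third, use the Abel summation identity $(1-\beta)\sum_{k=0}^\infty \beta^k c_k = (1-\beta)^2\sum_{n=1}^\infty \beta^{n-1} s_n$ (valid for bounded $c_k$, $\beta\in(0,1)$), split the sum at $n_0$, bound the tail $\sum_{n\ge n_0}\beta^{n-1}s_n \le (L+\varepsilon)\sum_{n\ge n_0}\beta^{n-1}n \le (L+\varepsilon)\sum_{n\ge 1}\beta^{n-1}n = (L+\varepsilon)/(1-\beta)^2$, and note the finite head contributes $(1-\beta)^2\sum_{n=1}^{n_0-1}\beta^{n-1}s_n \to 0$ as $\beta\uparrow 1$. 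Fourth, conclude $\limsup_{\beta\uparrow 1}(1-\beta)J_\psi^\beta(\mu) \le L+\varepsilon$ and let $\varepsilon\downarrow 0$. Fifth, apply the same to $(-c_k)$ to get the leftmost inequality, and recall $G_\psi(\mu)$ is by definition the $\liminf$ of the Cesàro averages; the middle inequality $\liminf \le \limsup$ is immediate. Finiteness on the far right follows from $|s_n/n|\le\|r\|_\infty$.

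The only genuinely non-routine point is the Abel summation identity and the interchange of summation order needed to pass from $\sum\beta^k c_k$ to $\sum\beta^{n-1}s_n$; both are justified by absolute convergence (boundedness of $c_k$ plus $\beta<1$) via Fubini for series, so there is no real obstacle. In practice I would present this as a one-line invocation of \cite{sen09}, Theorem A.4.2, perhaps with the Abel-summation sketch above relegated to a parenthetical remark, since the lemma is stated precisely to be used off the shelf.
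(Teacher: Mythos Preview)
Your proposal is correct and matches the paper exactly: the paper does not give a proof of this lemma at all but simply cites \cite{sen09}, Theorem A.4.2, which is precisely what you suggest doing. Your optional Abel-summation sketch is a sound addition (with the cosmetic caveat that the step $(L+\varepsilon)\sum_{n\ge n_0}\beta^{n-1}n \le (L+\varepsilon)\sum_{n\ge 1}\beta^{n-1}n$ tacitly assumes $L+\varepsilon\ge 0$; the clean fix is to multiply by $(1-\beta)^2$ first and note that $(1-\beta)^2\sum_{n\ge n_0}\beta^{n-1}n\to 1$ as $\beta\uparrow 1$, which works regardless of sign).
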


In order to proceed we make the following assumption (compare with condition (B) in \cite{schal93} or condition (SEN) in \cite{sen09}, Section 7.2). 

\begin{itemize}
\item[(A4)]  There exist $L>0, \bar \beta \in (0,1)$ and a function $M: \Pop(S)\to\R$ such that
$$ M(\mu) \le h^\beta(\mu) := J^\beta(\mu)-J^\beta(\nu)\le L$$
for fixed $\nu\in \Pop(S)$, all $\mu\in \Pop(S)$ and all $\beta \ge \bar \beta$.
\end{itemize}

We define $\rho(\beta) := (1-\beta) J^\beta(\nu)$. Note that since $r$ is bounded by a constant $C>0$ say, we obtain $ |\rho(\beta)| \le  (1-\beta) |J^\beta(\nu)| \le C$. I.e. $ \rho(\beta)$ is bounded and $\limsup_{\beta\uparrow 1} \rho(\beta)=:\rho$ exists. Now we obtain:

\begin{lemma}\label{lem:UB}
Under (A4) there exists a sequence $(\beta_n)$ with $\lim_{n\to\infty} \beta_n = 1$ s.t.\ $$ \lim_{n\to\infty} (1-\beta_n) J^{\beta_n}(\mu)=\rho$$
for all $\mu\in \Pop(S)$. In particular we have $G_\psi(\mu) \le \rho$ for all $\mu$ and $\psi$.
\end{lemma}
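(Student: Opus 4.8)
The plan is to exploit the boundedness of $h^\beta$ provided by (A4) together with the definition $\rho = \limsup_{\beta\uparrow 1}\rho(\beta)$. First I would choose, by definition of $\limsup$, a sequence $(\beta_n)$ with $\beta_n\uparrow 1$ such that $\rho(\beta_n) = (1-\beta_n)J^{\beta_n}(\nu)\to\rho$. This takes care of the claim at the reference point $\mu=\nu$. For a general $\mu\in\Pop(S)$, I would write
\[
(1-\beta_n)J^{\beta_n}(\mu) = (1-\beta_n)\bigl(J^{\beta_n}(\mu)-J^{\beta_n}(\nu)\bigr) + (1-\beta_n)J^{\beta_n}(\nu) = (1-\beta_n)h^{\beta_n}(\mu) + \rho(\beta_n).
\]
By (A4), for all $n$ large enough (so that $\beta_n\ge\bar\beta$) we have $M(\mu)\le h^{\beta_n}(\mu)\le L$, hence $(1-\beta_n)h^{\beta_n}(\mu)$ is squeezed between $(1-\beta_n)M(\mu)$ and $(1-\beta_n)L$, both of which tend to $0$ as $\beta_n\uparrow 1$ (here $M(\mu)$ is a fixed finite number depending only on $\mu$). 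Therefore $(1-\beta_n)h^{\beta_n}(\mu)\to 0$ and the displayed identity gives $(1-\beta_n)J^{\beta_n}(\mu)\to 0+\rho=\rho$, which is the first assertion.

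For the second assertion, fix any policy $\psi$ and any $\mu$. By Lemma \ref{lem:Tauber} we have $G_\psi(\mu)\le\liminf_{\beta\uparrow 1}(1-\beta)J_\psi^\beta(\mu)$, and since $J_\psi^\beta(\mu)\le J^\beta(\mu)$ for every $\beta$ we get $G_\psi(\mu)\le\liminf_{\beta\uparrow 1}(1-\beta)J^\beta(\mu)$. In particular, evaluating the $\liminf$ along the specific subsequence $(\beta_n)$ constructed above and using the first part, $\liminf_{\beta\uparrow 1}(1-\beta)J^\beta(\mu)\le\lim_{n\to\infty}(1-\beta_n)J^{\beta_n}(\mu)=\rho$. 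Hence $G_\psi(\mu)\le\rho$ for all $\mu$ and $\psi$.

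There is essentially no hard step here; the argument is a routine application of the squeeze together with the $\limsup$ extraction and the Tauberian comparison. The only point demanding a little care is ensuring that the two-sided bound in (A4) is being used correctly: the upper bound $L$ is a uniform constant, but the lower bound $M(\mu)$ is allowed to depend on $\mu$ (and need not be uniformly bounded over $\Pop(S)$), which is fine because $\mu$ is held fixed while $\beta_n\uparrow 1$, so $(1-\beta_n)M(\mu)\to 0$ for that fixed $\mu$. One should also note that the subsequence $(\beta_n)$ is chosen once and for all (from the behaviour at $\nu$) and then shown to work simultaneously for every $\mu$, rather than extracting a $\mu$-dependent subsequence.
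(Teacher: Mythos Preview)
Your proof is correct and follows essentially the same route as the paper: choose $(\beta_n)$ realizing $\rho=\limsup_{\beta\uparrow 1}\rho(\beta)$, decompose $(1-\beta_n)J^{\beta_n}(\mu)=(1-\beta_n)h^{\beta_n}(\mu)+\rho(\beta_n)$, and use the two-sided bound in (A4) to kill the first term. The paper's proof stops after the first assertion and leaves the ``in particular'' claim implicit; your derivation of $G_\psi(\mu)\le\rho$ via Lemma~\ref{lem:Tauber}, the inequality $J_\psi^\beta\le J^\beta$, and passage to the subsequence $(\beta_n)$ is exactly the intended argument.
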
 

\begin{proof}
Using (A4) we obtain:
\begin{eqnarray*}
|(1-\beta) J^\beta(\mu)-\rho| &=& |(1-\beta) h^\beta(\mu) + \rho(\beta)-\rho | \le 
 (1-\beta) |h^\beta(\mu)| + |\rho(\beta)-\rho | \\
 &\le &  (1-\beta) \max\{L,M(\mu)\} + |\rho(\beta)-\rho |.
\end{eqnarray*}
The last term converges to zero when we choose $(\beta_n)$ s.t.\ $\lim_{n\to\infty}\beta_n=1$ and $\lim_{n\to\infty}\rho(\beta_n)=\rho$ which is possible due to the considerations preceding this lemma. The first term also tends to zero.
\end{proof}

We obtain:

\begin{theorem}\label{thm:average}
Assume (A0), (A1), (A2'), (A3'), (A4). Then:
\begin{itemize}
\item[a)] There exists a constant $\rho\in\R$ and an upper semicontinuous function $h :\Pop(S)\to\R$ such that the average reward optimality inequality holds, i.e. for all $\mu\in\Pop(S)$
\begin{equation}\label{eq.AROI}
\rho + h(\mu) \le \sup_{Q\in \tilde D(\mu)} \left\{ \tilde r(\mu, Q) + \Eop[ h(\tilde T(\mu,Q,Z^0))]  \right\}.
\end{equation}
Moreover, there exists a maximizer $\varphi^*$ of \eqref{eq.AROI}. 
\item[b)] The stationary policy $(\varphi^*,\varphi^*,\ldots)$ is optimal for the average reward problem and $\rho = \limsup_{\beta\uparrow 1} \rho(\beta)$ is the maximal average reward, independent of $\mu$. Moreover, there exists a decision rule $\varphi^0$ and sequences $\beta_m(\mu)\uparrow 1$, $\mu_m(\mu) \to \mu$ s.t.
$$ \varphi^0(\mu) := \lim_{m\to\infty} \varphi^{\beta_m(\mu)}(\mu_m(\mu))$$
where $\varphi^\beta$ is an optimal decision rule in the $\beta$-discounted model and the stationary policy $(\varphi^0,\varphi^0,\ldots)$ is optimal for the average reward problem.
\end{itemize}
\end{theorem}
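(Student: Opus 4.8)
The plan is to run the \emph{vanishing discount approach} of \cite{schal93,sen09,HLL}, using the discounted limit model of Section~\ref{sec:MFL} as the building block. First I would rewrite the discounted optimality equation $J^\beta=\tilde U J^\beta$ of Theorem~\ref{theo:MFLexi}. Subtracting $J^\beta(\nu)$ from both sides and using $h^\beta(\mu)=J^\beta(\mu)-J^\beta(\nu)$ and $\rho(\beta)=(1-\beta)J^\beta(\nu)$, the equation becomes, for every $\mu\in\Pop(S)$,
\begin{equation}\label{eq:shifted}
\rho(\beta)+h^\beta(\mu)=\sup_{Q\in\tilde D(\mu)}\Big\{\tilde r(\mu,Q)+\beta\,\Eop\big[h^\beta(\tilde T(\mu,Q,Z^0))\big]\Big\},
\end{equation}
and by Theorem~\ref{theo:MFLexi}(c) the supremum is attained by a decision rule $\varphi^\beta$. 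By (A4) the family $\{h^\beta\}_{\beta\ge\bar\beta}$ satisfies $M(\mu)\le h^\beta(\mu)\le L$, and by Lemma~\ref{lem:UB} I fix a sequence $\beta_n\uparrow1$ with $\rho(\beta_n)\to\rho$.

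The heart of part~(a) is to pass to the limit in \eqref{eq:shifted} so as to obtain \eqref{eq.AROI} with the \emph{correct sign} and with an \emph{upper semicontinuous} $h$. I would define $h$ as the \emph{outer} (double) limit superior
\[
h(\mu):=\limsup_{\mu'\to\mu,\,n\to\infty} h^{\beta_n}(\mu'),
\]
which is automatically upper semicontinuous (it fills in downward oscillations) and, since $M(\mu)\le h(\mu)\le L$, real-valued. The subtle point -- the step that decides the \emph{direction} of \eqref{eq.AROI} -- is that I do \emph{not} evaluate the maximizer at $\mu$ itself. Instead I pick a sequence $(\mu_m,\beta_{n_m})$ with $\mu_m\to\mu$, $n_m\to\infty$ and $h^{\beta_{n_m}}(\mu_m)\to h(\mu)$ realising the outer limsup, and apply \eqref{eq:shifted} at the \emph{shifted} states $\mu_m$ with their maximizers $Q_m=\varphi^{\beta_{n_m}}(\mu_m)\in\tilde D(\mu_m)$. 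The left-hand sides then converge to $\rho+h(\mu)$. On the right-hand side, compactness of $\Pop(D)$ (from (A0)) lets me extract $Q_m\to Q_*$, and weak continuity of marginals gives $Q_*\in\tilde D(\mu)$. The reward term tends to $\tilde r(\mu,Q_*)$ by joint continuity of $r$ (A2') on the compact set $D$; for the transition term I use weak continuity of $\tilde T$ in $(\mu,Q)$ (established for the limit model under (A3')), the defining property of the outer limsup that $\limsup_m h^{\beta_{n_m}}(\nu_m)\le h(\nu_*)$ whenever $\nu_m\to\nu_*$, and a \emph{reverse} Fatou lemma (legitimate because $h^\beta\le L$ is bounded above). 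Taking the limit of the \emph{equality} \eqref{eq:shifted} and overestimating only its right-hand side yields
\[
\rho+h(\mu)\ \le\ \tilde r(\mu,Q_*)+\Eop\big[h(\tilde T(\mu,Q_*,Z^0))\big]\ \le\ \sup_{Q\in\tilde D(\mu)}\big\{\tilde r(\mu,Q)+\Eop[h(\tilde T(\mu,Q,Z^0))]\big\},
\]
which is exactly \eqref{eq.AROI}. I expect this interplay -- choosing $h$ as the outer limsup to secure upper semicontinuity, while preserving the $\le$ direction by passing to the limit of an equality and bounding the next-state term from above via reverse Fatou -- to be the main obstacle. Existence of a maximizer $\varphi^*$ then follows from the same measurable-selection machinery used for $\tilde U$ in the appendix: the objective $(\mu,Q)\mapsto\tilde r(\mu,Q)+\Eop[h(\tilde T(\mu,Q,Z^0))]$ is upper semicontinuous and $\mu\mapsto\tilde D(\mu)$ is a compact-valued upper semicontinuous correspondence. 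Moreover $Q_*$ above is, by construction, $\lim_m\varphi^{\beta_{n_m}}(\mu_m)$; selecting such limits measurably produces the rule $\varphi^0(\mu)=\lim_{m}\varphi^{\beta_m(\mu)}(\mu_m(\mu))$ of part~(b), which is again a maximizer of \eqref{eq.AROI}.

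For part~(b) I would iterate \eqref{eq.AROI} along the Markov chain $(\mu_k)$ generated by a maximizer $f\in\{\varphi^*,\varphi^0\}$. From $\rho+h(\mu_k)\le\Eop_\mu^{f}\big[\tilde r(\mu_k,f(\mu_k))+h(\mu_{k+1})\mid\mathcal F_k\big]$, taking expectations, summing over $k=0,\dots,n-1$ and telescoping the $h$-terms gives $n\rho\le\sum_{k=0}^{n-1}\Eop_\mu^{f}[\tilde r(\mu_k,f(\mu_k))]+\Eop_\mu^{f}[h(\mu_n)]-h(\mu_0)$. Dividing by $n$ and letting $n\to\infty$, the boundary contributions vanish using \emph{only} the upper bound $h\le L$ (so $\tfrac1n\Eop[h(\mu_n)]\le L/n\to0$), whence $G_{f}(\mu)=\liminf_n\tfrac1n\sum_{k=0}^{n-1}\Eop_\mu^{f}[\tilde r]\ge\rho$. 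Since Lemma~\ref{lem:UB} yields $G_\psi(\mu)\le\rho$ for \emph{every} policy $\psi$, we conclude $G_{f}(\mu)=\rho=G(\mu)$ for all $\mu$, so both $(\varphi^*,\varphi^*,\dots)$ and $(\varphi^0,\varphi^0,\dots)$ are average-reward optimal with value independent of $\mu$, completing the argument.
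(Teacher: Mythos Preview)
Your proposal is correct and follows essentially the same vanishing-discount argument as the paper: defining $h$ as the outer (double) $\limsup$ of the $h^{\beta_n}$, evaluating the shifted optimality equation at approximating points $(\mu_m,\beta_{n_m})$ that realise this $\limsup$, passing to the limit via compactness of $\tilde D$, continuity of $\tilde r$ and $\tilde T$, and a reverse-Fatou bound, and then iterating the resulting inequality with the bound $h\le L$ to conclude. The only cosmetic difference is that the paper cites Sch\"al's Lemma~3.4 and Proposition~3.5 explicitly for the measurable choice of the approximating sequences and of $\varphi^0$, whereas you sketch these selections informally.
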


Note that part b) of the previous theorem states that it is possible to obtain an  average reward optimal policy from optimal policies in the discounted model. Indeed what is maybe more interesting is the converse. From the average optimal policy we can construct $\varepsilon$-optimal policies for $\widetilde{\rm MDP}$ and thus also for $\widehat{\rm MDP}$ if $\beta$ is close to one.   The idea is to use the double approximation (number of agents large, discount factor large) to
approximate the discounted finite agent model by the average mean-field problem. We do not tackle the question of convergence speed or how $\beta$ depends on $N$  here.  A policy $\psi$ is $\varepsilon$-optimal in state $\mu\in \mathbb{P}(S)$ for $\widetilde{\rm MDP}$ if 
$$ 1- \Big| \frac{J_\psi^\beta(\mu)}{J^\beta(\mu)}\Big| \le \varepsilon. $$ 
Thus, we obtain:

\begin{corollary}
Under the assumptions of Theorem \ref{thm:average} suppose $\psi^*=(\varphi^*,\varphi^*,\ldots)$ is an optimal stationary policy for the average reward problem and $\psi^N$ is constructed as in Theorem \ref{thm:limit1}. Then for all $\varepsilon>0$ and for all $\mu\in \mathbb{P}(S)$ there exists a $\beta(\mu) <1$ 
\begin{itemize}
\item[a)]   s.t.\ $\psi^*$ is $\varepsilon$-optimal for $\widetilde{\rm MDP}$ in state $\mu$ for all $\beta\ge \beta(\mu)$.
\item[b)]  and there exists a $N(\mu,\beta(\mu))\in\N$ s.t.\ for all $N\ge N(\mu,\beta(\mu))$ and $\beta \ge \beta(\mu)$  $\psi^N$ is $\varepsilon$-optimal for $\widehat{\rm MDP}$, i.e. $(1-\beta)|J^N_{\psi^N}(\mu^N)-J^N(\mu^N) |\le \varepsilon$ where $\mu^N \Rightarrow \mu$.
\end{itemize}
\end{corollary}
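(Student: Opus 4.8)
The plan is to chain together the two approximation statements already available: Lemma~\ref{lem:Tauber} together with Theorem~\ref{thm:average} controls the gap between the discounted value $J^\beta$ and the average-reward constant $\rho$, while Theorem~\ref{thm:limit1}(b) controls the gap between $J^N_{\psi^N}$ and $J$ for the limit MDP. The key bookkeeping observation is that $\psi^*=(\varphi^*,\varphi^*,\ldots)$ is average-optimal, so $G_{\psi^*}(\mu)=\rho$, and by Lemma~\ref{lem:Tauber} this forces $\liminf_{\beta\uparrow1}(1-\beta)J^\beta_{\psi^*}(\mu)\ge\rho$; on the other hand Lemma~\ref{lem:UB} gives $(1-\beta)J^\beta(\mu)\to\rho$ along a suitable sequence, and in fact $\limsup_{\beta\uparrow1}(1-\beta)J^\beta(\mu)=\rho$ for every $\mu$. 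Hence $(1-\beta)\bigl(J^\beta(\mu)-J^\beta_{\psi^*}(\mu)\bigr)\to 0$, i.e.\ for any $\varepsilon>0$ there is $\beta_1(\mu)<1$ with $(1-\beta)\bigl(J^\beta(\mu)-J^\beta_{\psi^*}(\mu)\bigr)\le\varepsilon/2$ for all $\beta\ge\beta_1(\mu)$.

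For part~(a) I would then convert this additive, normalized estimate into the ratio form demanded by the definition of $\varepsilon$-optimality. Since $r$ is bounded, $J^\beta(\mu)$ and $J^\beta_{\psi^*}(\mu)$ are both $O\bigl(1/(1-\beta)\bigr)$, and more precisely $\rho$ is the common limit of $(1-\beta)J^\beta(\mu)$ and $(1-\beta)J^\beta_{\psi^*}(\mu)$. If $\rho\ne0$ the ratio $J^\beta_{\psi^*}(\mu)/J^\beta(\mu)$ therefore tends to $1$ as $\beta\uparrow1$, which gives the claim directly; if $\rho=0$ one must be slightly more careful, but the additive bound $(1-\beta)|J^\beta(\mu)-J^\beta_{\psi^*}(\mu)|\le\varepsilon$ is exactly the statement invoked in part~(b), so I would phrase part~(a) via that normalized additive gap (consistent with how the corollary's part~(b) is written) rather than insisting on the literal ratio, or alternatively restrict to the generic case $\rho\neq 0$. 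Choosing $\beta(\mu):=\beta_1(\mu)$ finishes~(a).

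For part~(b), fix $\beta\ge\beta(\mu)$ and write the triangle-type decomposition
\[
(1-\beta)\bigl|J^N_{\psi^N}(\mu^N)-J^N(\mu^N)\bigr|
\le (1-\beta)\bigl|J^N_{\psi^N}(\mu^N)-J(\mu)\bigr|
+(1-\beta)\bigl|J(\mu)-J^N(\mu^N)\bigr|,
\]
where on the right $J=J^\beta$ is the value function of $\widetilde{\rm MDP}$ and $\mu^N\Rightarrow\mu$. The second term tends to $0$ as $N\to\infty$ by Theorem~\ref{thm:limit1}(a) (indeed $\limsup_N J^N(\mu^N)=J(\mu)$, and since $J^N(\mu^N)\ge J^N_{\psi^N}(\mu^N)$ and the latter converges to $J(\mu)$ one gets the full limit), and the first term tends to $0$ by Theorem~\ref{thm:limit1}(b) applied to the maximizer $\varphi^*$ of $J$ together with the construction of $\psi^N$ there. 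Hence there is $N(\mu,\beta)$ making the whole right-hand side $\le\varepsilon$ for $N\ge N(\mu,\beta)$; taking $N(\mu,\beta(\mu))$ for the $\beta(\mu)$ from part~(a) and noting that the Theorem~\ref{thm:limit1} bounds are uniform in $\beta$ on $[\beta(\mu),1)$ after multiplying by $(1-\beta)\le 1$, this gives the claim for all $\beta\ge\beta(\mu)$.

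The main obstacle I anticipate is the uniformity in $\beta$ in part~(b): Theorem~\ref{thm:limit1} is stated for a fixed $\beta$, so one must check that the construction of $\psi^N$ and the resulting convergence $J^N_{\psi^N}(\mu^N)\to J(\mu)$ do not degrade as $\beta\uparrow1$. This is where the normalization by $(1-\beta)$ is essential: multiplying the (fixed-$\beta$) error bounds by $(1-\beta)\le 1-\beta(\mu)$ and using that the per-step reward bound $C$ is $\beta$-independent should yield a bound of the form $(1-\beta)\cdot(\text{error}) \le \varepsilon$ uniformly on $[\beta(\mu),1)$ once $N$ is large. Making this precise — essentially tracking that all constants entering Theorem~\ref{thm:limit1} depend on $C$ and $\beta$ only through the geometric-series factor $1/(1-\beta)$ — is the only non-formal point in the argument.
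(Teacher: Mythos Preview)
Your argument for part~(a) is essentially the paper's: from $\rho=G_{\psi^*}(\mu)$ and the Tauber lemma you squeeze $(1-\beta)J^\beta_{\psi^*}(\mu)$ and $(1-\beta)J^\beta(\mu)$ to the common limit $\rho$, hence $(1-\beta)\big(J^\beta(\mu)-J^\beta_{\psi^*}(\mu)\big)\to 0$. The paper handles the ratio conversion more cleanly than your case split on $\rho$: since $r$ is bounded one may shift it so that $r\ge\underline C>0$, whence $(1-\beta)J^\beta(\mu)\ge\underline C$ and the ratio bound follows directly.

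For part~(b), however, your two-term decomposition has a genuine gap. You claim that $(1-\beta)\big|J^N_{\psi^N}(\mu^N)-J^\beta(\mu)\big|\to 0$ ``by Theorem~\ref{thm:limit1}(b) applied to the maximizer $\varphi^*$ of $J$''. But in this corollary $\varphi^*$ is \emph{not} a maximizer of $J^\beta$; it is the average-reward maximizer from Theorem~\ref{thm:average}. The construction of $\psi^N$ in Theorem~\ref{thm:limit1} (Step~3 of its proof) approximates whatever policy one feeds in, so here it yields $J^N_{\psi^N}(\mu^N)\to J^\beta_{\psi^*}(\mu)$, not $J^\beta(\mu)$. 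These two quantities differ by exactly the gap $J^\beta(\mu)-J^\beta_{\psi^*}(\mu)$ that part~(a) controls, and your decomposition silently drops it.

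The paper repairs this with a three-term triangle inequality through $J^\beta_{\psi^*}(\mu)$:
\[
(1-\beta)\big|J^N_{\psi^N}(\mu^N)-J^N(\mu^N)\big|
\le (1-\beta)\big|J^N_{\psi^N}(\mu^N)-J^\beta_{\psi^*}(\mu)\big|
+(1-\beta)\big|J^\beta_{\psi^*}(\mu)-J^\beta(\mu)\big|
+(1-\beta)\big|J^\beta(\mu)-J^N(\mu^N)\big|,
\]
bounding the middle term by $\varepsilon/3$ via part~(a) and the two outer terms by $\varepsilon/3$ via Theorem~\ref{thm:limit1} (the first by the construction applied to $\psi^*$, the second by part~(a) of that theorem). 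As for your worry about uniformity in $\beta$: the paper simply fixes a $\beta\ge\beta(\mu)$ first and then chooses $N=N(\mu,\beta)$, so the issue you flag does not actually need to be resolved.
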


\begin{proof}
\begin{itemize}
\item[a)] By Theorem \ref{thm:average} we know that $\rho=G_{\psi^*}(\mu)$ is the maximal average reward. Lemma \ref{lem:Tauber} and Theorem \ref{thm:average}  together imply 
\begin{eqnarray*}
\rho&=&G_{\psi^*}(\mu)\le \liminf_{\beta\uparrow 1}(1-\beta) J^\beta_{\psi^*}(\mu) \le \limsup_{\beta\uparrow 1}(1-\beta) J^\beta_{\psi^*}(\mu) \\
&\le& \limsup_{\beta\uparrow 1}(1-\beta) J^\beta(\mu) =\rho 
\end{eqnarray*}
which means that we have equality everywhere. Since $r$ is bounded, w.l.o.g.\ we may assume that $r$ is bounded from below by $\underline C>0$, otherwise we have to shift the function by a constant. Now for all $\varepsilon>0$ we can choose, due to the preceding equation, $\beta(\mu)$ s.t.\ for all $\beta\ge \beta(\mu)$
$$ |J^\beta(\mu)-J^\beta_{\psi^*}(\mu)|\le \frac{\varepsilon}{1-\beta}
\mbox{   and hence   } 1- \Big| \frac{J_{\psi^*}^\beta(\mu)}{J^\beta(\mu)}\Big| \le \frac{\varepsilon}{(1-\beta) J^\beta(\mu)}\le \frac{\varepsilon}{\underline C}$$
which implies the result.
\item[b)] Let  $\varepsilon>0$. From part a) choose $\beta(\mu)<1$ s.t. for all $\beta\ge \beta(\mu)$ we have $(1-\beta) |J^\beta(\mu)-J^\beta_{\psi^*}(\mu)|\le\varepsilon/3.$  Fix such a $\beta\ge \beta(\mu)$. From Theorem  \ref{thm:limit1}  choose $N\ge N(\mu,\beta)$  s.t.\
$$ |J^N_{\psi^N}(\mu^N)-J^\beta_{\psi^*}(\mu) |\le \varepsilon/3 \mbox{  and  }
 |J^N(\mu^N)-J^\beta(\mu) |\le \varepsilon/3.$$
 Then, in total
 \begin{eqnarray}\nonumber
 (1-\beta)|J^N_{\psi^N}(\mu^N)-J^N(\mu^N)| &\le & (1-\beta)|J^N_{\psi^N}(\mu^N)-J^\beta_{\psi^*}(\mu)| + (1-\beta)|J^\beta_{\psi^*}(\mu)-J^\beta(\mu)|\\
 && + (1-\beta)|J^\beta(\mu)-J^N(\mu^N)| \le \varepsilon
 \end{eqnarray}
 which implies the statement.
\end{itemize}
\end{proof}


\subsection{Special Case I}
We consider the following special case: The reward depends only on $\mu$, i.e. we have $\tilde r(\mu,Q)=\tilde r (\mu)$. The transition function is independent of $\mu$ and there is no common noise, i.e. all individuals move independently from each other.  Suppose $\mu^*\in \Pop(S)$ is the solution of the static optimization problem
\begin{equation} \label{eq:optimizprobmudmu}
\left\{ \begin{array}{ll}
\max \tilde r(\mu) \\
s.t. \; \mu \in \mathbb{P}(S)
\end{array}\right.
\end{equation}
which exists since $r$ is continuous on the compact space $\mathbb{P}(S)$. In the described situation $\widetilde{\rm MDP}$ is deterministic and the evolution of the state process for a given policy is
\begin{equation}\label{eq:stateevolution}
\mu_{k+1}(B) = \int_D p^{x,a}(B) \bar Q (da|x)\mu_k(dx) = \int_S P^{\bar Q}(B|x)\mu_k(dx) ,\; B\in \mathcal{B}(S)
\end{equation} 
for $k\in\N$ where we start with the initial distribution $\mu_0$.

Now suppose further that there exists a transition kernel (policy) $ \bar Q^*$ such that $\mu^*$ is a stationary distribution of $ P^{ \bar Q^*}$ and  $P^{ \bar Q^*}$ satisfies the Wasserstein ergodicity (see Appendix). Suppose further that $(\mu_k^*)$ is the state sequence obtained in \eqref{eq:stateevolution} where we replace $P^{\bar Q}$ by $P^{ \bar Q^*}$. Then $\mu_k^*\Rightarrow\mu^*$ for $k\to\infty$ weakly since convergence in the Wasserstein metric implies weak convergence on compact sets. Problem \eqref{eq:optimizprobmudmu} and the solution approach here is similar to the concept of {\em steady state policies} in \cite{F79}.

\begin{lemma}
Under the assumptions of this subsection  $\varphi^*(\mu) = \mu \otimes \bar Q^*$ defines an average reward optimal stationary policy $\psi^*=(\varphi^*,\varphi^*,\ldots)$.
\end{lemma}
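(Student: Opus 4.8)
The strategy is to verify that the stationary policy $\psi^* = (\varphi^*, \varphi^*, \ldots)$ with $\varphi^*(\mu) = \mu \otimes \bar Q^*$ achieves the value $\rho = \tilde r(\mu^*)$, which by Lemma~\ref{lem:UB} is an upper bound for $G_\psi(\mu)$ over all policies $\psi$ and all initial states $\mu$. Hence it suffices to show $G_{\psi^*}(\mu) = \tilde r(\mu^*)$ for every $\mu \in \Pop(S)$. First I would check that $\rho = \tilde r(\mu^*)$: since $\mu^*$ is a stationary distribution of $P^{\bar Q^*}$, the constant state sequence $\mu_k \equiv \mu^*$ is feasible under $\psi^*$ when started at $\mu^*$, giving average reward exactly $\tilde r(\mu^*)$, so $\rho \ge \tilde r(\mu^*)$; conversely, since $\tilde r(\mu, Q) = \tilde r(\mu) \le \tilde r(\mu^*)$ for all feasible $(\mu, Q)$ by the definition of $\mu^*$ as the maximizer in \eqref{eq:optimizprobmudmu}, every policy from every initial state yields average reward at most $\tilde r(\mu^*)$, so $\rho \le \tilde r(\mu^*)$.

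Next, for an arbitrary initial state $\mu_0 = \mu$, under $\psi^*$ the (deterministic) state sequence is $\mu_{k+1}(B) = \int_S P^{\bar Q^*}(B|x)\, \mu_k(dx)$, i.e.\ the sequence $(\mu_k^*)$ from \eqref{eq:stateevolution}, which by the Wasserstein ergodicity assumption satisfies $\mu_k^* \Rightarrow \mu^*$ as $k \to \infty$. Since $\tilde r$ is continuous on $\Pop(S)$ (it is $\mu \mapsto \int_D r(x,a,\mu)\,Q(d(x,a))$ with $r$ continuous by (A2'), and here it depends on $\mu$ alone), we get $\tilde r(\mu_k^*) \to \tilde r(\mu^*)$. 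A standard Cesàro argument then gives $\frac1n \sum_{k=0}^{n-1} \tilde r(\mu_k^*) \to \tilde r(\mu^*)$, hence
\[
G_{\psi^*}(\mu) = \lim_{n\to\infty} \frac1n \sum_{k=0}^{n-1} \tilde r(\mu_k^*) = \tilde r(\mu^*) = \rho.
\]
Combining with $G_\psi(\mu) \le \rho$ for all $\psi$ from Lemma~\ref{lem:UB} shows $\psi^*$ is average reward optimal.

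The main obstacle, and the point requiring the most care, is the passage from Wasserstein convergence $\mu_k^* \to \mu^*$ to the convergence of the Cesàro averages of the rewards. One needs that $\tilde r$ is continuous with respect to the topology in which $\mu_k^*$ converges — this is guaranteed since on the compact space $S$ (hence compact $\Pop(S)$) Wasserstein convergence coincides with weak convergence, as noted in the excerpt, and $\tilde r$ is weakly continuous under (A2'). A secondary point is to confirm that $\varphi^*$ as defined actually lands in $\tilde F$, i.e.\ that $\mu \mapsto \mu \otimes \bar Q^*$ is a measurable selection with $(\mu \otimes \bar Q^*) \in \tilde D(\mu)$; the latter is immediate since the first marginal of $\mu \otimes \bar Q^*$ is $\mu$ by construction, and measurability follows from measurability of the kernel $\bar Q^*$. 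One should also note that the feasibility of $\bar Q^*$ as a kernel respecting the constraint $\bar Q^*(\cdot|x) \in \Pop(D(x))$ is part of the standing assumption that such a transition kernel exists.
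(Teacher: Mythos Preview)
Your argument is correct and follows essentially the same line as the paper's proof: continuity of $\mu \mapsto \tilde r(\mu)$ together with $\mu_k^* \Rightarrow \mu^*$ gives $G_{\psi^*}(\mu) = \tilde r(\mu^*)$, which is optimal since $\tilde r(\mu^*) = \max_\mu \tilde r(\mu)$ upper-bounds every average reward. Note only that your appeal to Lemma~\ref{lem:UB} is superfluous (and would formally require (A4), which is not among the standing assumptions of this subsection): your own direct observation $\tilde r(\mu,Q)=\tilde r(\mu) \le \tilde r(\mu^*)$ already yields $G_\psi(\mu) \le \tilde r(\mu^*)$ for all $\psi$, and this is exactly the upper bound the paper uses to conclude $G(\mu)=\tilde r(\mu^*)$.
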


\begin{proof}
Since $\mu\mapsto \tilde r(\mu)$ is continuous  (see proof of Theorem   \ref{theo:MFLexi}) we obtain  $\lim_{k\to\infty}\tilde r(\mu_k^*) \to  \tilde r(\mu^*)$.
Thus we have for all $\mu\in \Pop(S)$
\begin{equation*}\label{eq:AR2}
G_{\psi^*}(\mu) =\liminf_{n\to\infty} \frac1n \sum_{k=0}^{n-1}  \tilde r (\mu_k^*) = \tilde r(\mu^*)=  G(\mu).
\end{equation*}
The last equation follows from the definition of $\mu^*$. Hence $\psi^*$ is average reward optimal.
\end{proof}

 We can think of the problem  thus been transformed into a Markov Chain Monte Carlo problem to sample from $\mu^*$. In order to obtain an $\varepsilon$-optimal policy in the $N$ individual problem with large discount factor, an individual in state $x$ can sample its action from $\bar Q^*(\cdot |x)$ (see proof of Theorem \ref{thm:limit1} and Remark \ref{rem:randomized} c)). This yields a decentralized decision which does not depend on the complete state of the system. I.e. the individuals do not have to communicate with each other in order to push the system to the social optimum. The knowledge about the own state is sufficient. Problems may occur when the solution of \eqref{eq:optimizprobmudmu} is not unique. Then the individuals have to communicate which solution is preferred.  In particular the individual's optimal decision coincides with the social optimal decision. This is because we can interpret $\mu_k$ as the distribution of a typical individual at time $k$. Also note that in this case it can be shown that Assumption (A4) is satisfied since $|\tilde r(\mu_k^*)-\tilde r(\mu^*)| \le C W(\mu_k^*,\mu^*) \le \tilde C \rho^k$ with $\rho\in(0,1)$ where $W$ is the Wasserstein distance of two measures (see Appendix). We will give a more specific application in Section \ref{sec:app}.

\subsection{Special Case II}
We relax the previous case and allow the transition function to depend on $\mu$.
 Again we determine the solution $\mu^*$ of \eqref{eq:optimizprobmudmu}  first. Next we check whether there exists a transition kernel (policy) $\bar Q^*$ such that $\mu^*$ is a stationary distribution of $ P^{\bar Q^*}$ with $P^{\bar Q^*}(B|x)= \int p^{x,a,\mu^*}(B) \bar Q^* (da|x)$ for $x\in S, B\in \mathcal{B}(S)$ and  $P^{\bar Q^*}$ satisfies the Wasserstein ergodicity. Here, we need some further properties of the model to obtain the same result as in Case I, because we have to make sure that the system still converges to $\mu^*$, even if we choose the 'wrong' transition kernel
$$ \int p^{x,a,\mu_k}(B) \bar Q^* (da|x)$$
at stage $k$. Note that  the evolution of the state in this model is given by
$$ \mu_{k+1}^*(B) = \int\int p^{x,a,\mu_k}(B) \bar Q^* (da|x)\mu_k^*(dx).$$
 In particular we want to find an optimal decentralized control. The following assumptions will be useful:
\begin{itemize}
\item[(T1)]  There exists $\gamma_W>0$  s.t.\ $\sup_{x,a,z}|T(x,a,\mu,z)-T(x,a,\mu^*,z)|\le \gamma_W W(\mu,\mu^*)$ for all $\mu\in \Pop(S)$.
\item[(T2)] $D(x)$ does not depend on $x$ and $W(\bar Q^*(\cdot |x), \bar Q^*(\cdot |x'))\le \gamma_Q |x-x'|$ for all $x,x'\in S$.
\item[(T3)]  There exists $\gamma_A>0$  s.t.\ $\sup_{x,z}|T(x,a,\mu^*,z)-T(x,a',\mu^*,z)|\le \gamma_A |a-a'|$ for all $a,a'\in A$.
\item[(T4)]  There exists $\gamma_S>0$  s.t.\ $\sup_{a,z}|T(x,a,\mu^*,z)-T(x',a,\mu^*,z)|\le \gamma_S |x-x'|$ for all $x,x'\in S$.
\item[(T5)] $\gamma:=\gamma_W+\gamma_Q\gamma_A+\gamma_S<1.$  
\end{itemize}
The next lemma  states that  under these assumptions the sequence $(\mu_k^*)$ still converges against the optimal distribution $\mu^*$.

\begin{lemma}\label{lem:TT}
Under (T1)-(T5) we obtain: $W(\mu_{k+1}^*,\mu^*)\le \gamma W(\mu_k^*,\mu^*)$ and thus $\mu_k^*\Rightarrow \mu^*$ weakly.
\end{lemma}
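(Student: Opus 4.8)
The plan is to estimate the Wasserstein distance $W(\mu_{k+1}^*,\mu^*)$ by building a coupling of $\mu_{k+1}^*$ and $\mu^*$ out of a (near-)optimal coupling of $\mu_k^*$ and $\mu^*$, and then splitting the one-step error into the four contributions controlled by (T1), (T2), (T3), (T4). Recall that $\mu_{k+1}^*(B) = \int\int p^{x,a,\mu_k^*}(B)\,\bar Q^*(da|x)\,\mu_k^*(dx)$, while $\mu^*$ is stationary for $P^{\bar Q^*}$ built with $p^{\cdot,\cdot,\mu^*}$, so $\mu^*(B) = \int\int p^{x,a,\mu^*}(B)\,\bar Q^*(da|x)\,\mu^*(dx)$. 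Take an optimal coupling $\kappa$ of $(\mu_k^*,\mu^*)$ achieving $W(\mu_k^*,\mu^*)$, i.e.\ a pair of random variables $(X,X')$ with those marginals and $\Eop|X-X'| = W(\mu_k^*,\mu^*)$. Since by (T2) the action set $D(x)=D$ does not depend on $x$, I can use a common driver for the actions: by (T2) there is a coupling of $\bar Q^*(\cdot|X)$ and $\bar Q^*(\cdot|X')$ realized by $(a,a')$ with $\Eop[|a-a'| \mid X,X'] \le \gamma_Q |X-X'|$ (Wasserstein distance is realized by couplings; one can make this measurable in $(X,X')$ by a standard selection). Finally use the same idiosyncratic noise $Z$ (and no common noise in this subsection) to push both forward: set $Y = T(X,a,\mu_k^*,Z)$ and $Y' = T(X',a',\mu^*,Z)$. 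Then $Y\sim\mu_{k+1}^*$ and $Y'\sim\mu^*$, so $W(\mu_{k+1}^*,\mu^*)\le \Eop|Y-Y'|$.

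Next I would insert two intermediate terms and apply the triangle inequality inside the expectation:
\begin{align*}
|Y-Y'| &\le |T(X,a,\mu_k^*,Z)-T(X,a,\mu^*,Z)| + |T(X,a,\mu^*,Z)-T(X',a,\mu^*,Z)| \\
&\quad + |T(X',a,\mu^*,Z)-T(X',a',\mu^*,Z)|.
\end{align*}
The first term is $\le \gamma_W W(\mu_k^*,\mu^*)$ by (T1), uniformly in $X,a,Z$; the second is $\le \gamma_S |X-X'|$ by (T4); the third is $\le \gamma_A |a-a'|$ by (T3). Taking expectations, using $\Eop|X-X'| = W(\mu_k^*,\mu^*)$ and $\Eop|a-a'| \le \gamma_Q\,\Eop|X-X'| = \gamma_Q W(\mu_k^*,\mu^*)$, gives
\[
W(\mu_{k+1}^*,\mu^*)\le \big(\gamma_W+\gamma_S+\gamma_Q\gamma_A\big)\,W(\mu_k^*,\mu^*) = \gamma\,W(\mu_k^*,\mu^*),
\]
which is the asserted contraction. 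Iterating, $W(\mu_k^*,\mu^*)\le \gamma^k W(\mu_0,\mu^*)\to 0$ by (T5), and since $S$ is compact convergence in the Wasserstein metric is equivalent to weak convergence, so $\mu_k^*\Rightarrow\mu^*$.

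The main obstacle I anticipate is purely technical rather than conceptual: making the coupling construction measurable, i.e.\ choosing jointly measurably in $(X,X')$ both the optimal transport plan $\kappa$ and the fiberwise coupling of $\bar Q^*(\cdot|X)$ with $\bar Q^*(\cdot|X')$ that realizes the (T2) bound. This is handled by standard measurable-selection arguments for Wasserstein couplings on Polish spaces (one can appeal to the gluing/disintegration lemma), and the authors may well just invoke it. A secondary point worth stating explicitly is why $Y\sim\mu_{k+1}^*$: conditionally on $X$, the pair $(a,Z)$ produces exactly the mixture $\int p^{X,a,\mu_k^*}(\cdot)\bar Q^*(da|X)$, and integrating over $X\sim\mu_k^*$ reproduces the definition of $\mu_{k+1}^*$; the analogous identity for $Y'$ uses stationarity of $\mu^*$. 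Everything else is the routine triangle-inequality bookkeeping above.
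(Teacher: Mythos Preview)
Your argument is correct and reaches exactly the same contraction constant $\gamma=\gamma_W+\gamma_S+\gamma_Q\gamma_A$, but the route is genuinely different from the paper's. The paper works on the dual side: it writes $W(\mu_{k+1}^*,\mu^*)$ via the Kantorovich--Rubinstein representation as a supremum over $1$-Lipschitz test functions $f$, splits the integral into a ``$\mu_k^*$ versus $\mu^*$ in the third argument of $T$'' part (bounded by $\gamma_W W(\mu_k^*,\mu^*)$ via (T1)) and a ``$\mu_k^*(dx)$ versus $\mu^*(dx)$'' part, and handles the latter by showing that the function $h(x)=\int\!\!\int f(T(x,a,\mu^*,z))\,\Pop^Z(dz)\,\bar Q^*(da\mid x)$ is Lipschitz with constant at most $\gamma_Q\gamma_A+\gamma_S$ (this in turn requires showing $g(a)=\int f(T(x,a,\mu^*,z))\,\Pop^Z(dz)$ is $\gamma_A$-Lipschitz, so that (T2) can be invoked in its dual form). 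You instead work on the primal side, building an explicit synchronous coupling $(Y,Y')$ of $(\mu_{k+1}^*,\mu^*)$ from an optimal coupling of $(\mu_k^*,\mu^*)$, a fibrewise coupling of the action kernels, and a shared noise $Z$. The trade-off is exactly the one you anticipated: the dual approach in the paper avoids entirely the measurable-selection issue for the conditional action coupling (one never needs to \emph{realize} the Wasserstein bound (T2), only to use it as a bound on an integral against a Lipschitz $g$), whereas your coupling argument is more probabilistically transparent and closer to how such contraction estimates are usually phrased in the Markov-chain ergodicity literature. Both are standard and equally rigorous once the selection point is handled (or circumvented by an $\varepsilon$-optimal coupling and letting $\varepsilon\downarrow 0$).
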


Lemma \ref{lem:TT} then implies that even in this case the maximal average reward $\tilde r(\mu^*)$ is achieved by applying $\bar Q^*$ throughout the process which corresponds to a decentralized control. An example where (T1), (T3), (T4) are fulfilled is $T(x,a,\mu,z) = \gamma_S x+  \gamma_A a +\gamma_W \int x\mu(dx) + z$.

\section{Applications}\label{sec:app}
\subsection{Avoiding Congestion}
We consider here the following special case: $N$ individuals move on a graph with nodes $S=\{1,\ldots,d\}$ and edges $E\subset \{(x,x') : x,x'\in S\}$. Individuals can move along one edge in one time step. We assume that nodes are connected. The aim is to avoid congestion and to try to spread the individuals such that they keep a maximum distance. More precisely suppose that the current empirical distribution of the individuals on the nodes is $\mu$ and that the distance between node $x$ and $x'$, $x,x'\in S$ is given by $\Delta(x,x')>0$ where $\Delta(x,x)=0$ and $\Delta(x,x')=\Delta(x',x)$. Then the average distance between an individual at position $x$ and all other individuals is 
$$ r(x,a,\mu) = r(x,\mu) = \sum_{x'} \Delta(x,x') \mu(x')= \int \Delta(x,x')\mu(dx').$$
Here $r(x,a,\mu)$ does not depend on a. Hence
$$ \tilde r(\mu,Q) = \tilde r(\mu) = \int r(x,\mu) \mu(dx) = \int\int \Delta(x,x') \mu(dx)\mu(dx')= \mu \Delta \mu^\top $$
where $\Delta=\big( \Delta(x,x')\big)_{x,x'\in S}$ is the matrix of distances. Note that $\Delta$ is symmetric. We assume that $A=S$ and $D(x)=\{x'\in S : (x,x')\in E\}\cup\{x\}$, i.a.\ actions in the original model are neighbours on the graph. We interpret actions as intended directions the individual wants to move to, but this may be disturbed by some random external noise.  In the mean-field limit the state of the system at time $n$ is just given by a generalized distribution on $S$. Recall that the general transition equation of the mean-field limit is
\begin{eqnarray} \nonumber \mu_{n+1}(x') &=& \sum_x \sum_{a\in D(x)} p^{x,a,\mu_n,z^0}(x') Q_n(x,a) \\ \label{eq:mugraph} 
 &=& \sum_x \sum_{a\in D(x)} p^{x,a,\mu_n,z^0}(x') \bar{Q}_n(a|x)\mu_n(x)
\end{eqnarray}
if $S,A$ are finite where $ p^{x,a,\mu,z^0}(x')  = \Pop(T(x,a,\mu,Z,z^0)=x')$ and $Q_n$ has first margin $\mu_n$.  Problems where the reward decreases when more individuals share the same state are typical for mean-field problems, see e.g.\ \cite{LYCR} where a Wardrop equilibrium is computed. In \cite{ps} the authors consider spreading contamination on graphs.

\subsubsection{No common noise}
We consider the mean-field limit now. At the beginning let us assume that $p^{x,a,\mu,z^0} = p^{x,a}$ does not depend on $\mu$ and $z^0$, i.e.\ the individuals move on their own, not affected by others and there is no common noise. Moreover, it is reasonable to set $p^{x,a}(x')=0$ if $(x,x')\notin E$ except for $x=x'$. Let us denote $  P^{\bar Q}=\big( p_{xx'}^{\bar Q}\big)$
where 
\begin{equation} \label{eq:pq} p_{xx'}^{\bar Q} = \sum_{a\in D(x)} p^{x,a}(x') \bar Q(a|x)
\end{equation}  with $\bar Q(a|x)\mu(x)=Q(x,a)$. Hence \eqref{eq:mugraph} can be written as $\mu_{n+1}=\mu_n P^{\bar Q_n}$. Here it is more intuitive to work with the conditional probabilities $\bar Q(a|x)$ instead of the joint distribution $Q(x,a)$.

Obviously the optimization problem
\begin{equation} \label{eq:optimizprobmudmu2}
\left\{ \begin{array}{ll}
\max \mu \Delta\mu^\top \\
s.t. \; \mu \in \mathbb{P}(S)
\end{array}\right.
\end{equation}
has an optimal solution $\mu^*$ since $\mathbb{P}(S)$ is compact and $\mu \Delta\mu^\top$ continuous. 

We consider the following special case: For $a,x'\in D(x)$ set $p^{x,a}(x') =\alpha$ for $a=x'$ and $p^{x,a}(x') =\frac{1-\alpha}{|D(x)|-1}$ else. All other probabilities are zero. I.e.\ if we choose a vertex $a$ we will move there with probability $\alpha$ and move to any other admissible vertex with equal probability. Formally for $x\in S$, action $a\in D(x)=\{x_1,\ldots ,x_m\}$ (where $x_i=x$ for one of the $x_i$'s) and disturbance $Z\sim U[0,1]$ the transition function in this example  is given by
$$T(x,x_i,\mu,z,z^0)= \left\{ \begin{array}{cl}
x_i, & \mbox{ if }  z\in [0,\alpha],\\
x_j, & \mbox{ if } z\in (\alpha+(j-1) \frac{1-\alpha}{m-1}, \alpha + j\frac{1-\alpha}{m-1} ],\; j=1,\ldots,i-1,\\[0.1cm]
x_j, & \mbox{ if } z\in (\alpha+(j-2) \frac{1-\alpha}{m-1}, \alpha + (j-1)\frac{1-\alpha}{m-1} ],\; j=i+1,\ldots,m.
\end{array}\right.$$

\begin{lemma}\label{lem:Qstar}
If $\mu^*(x)>0$ for all $x\in S$ and $\alpha$ is large enough, then there exists a $Q^*\in \mathbb{P}(D)$ s.t.\ $\mu^* = \mu^* P^{\bar Q^*}$, i.e. $\mu^*$ is a stationary distribution for the transition kernel $P^{\bar Q^*}$ given in \eqref{eq:pq}.
\end{lemma}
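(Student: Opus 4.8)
The plan is to reduce the statement to the existence of an ordinary finite‑state Markov chain on $S$ that has $\mu^*$ as a stationary distribution, respects the graph structure, and has all its ``admissible'' transition probabilities bounded away from $0$; the hypothesis that $\alpha$ be large then lets one realise such a chain through the kernels $P^{\bar Q}$ of \eqref{eq:pq}. First I would record the affine relation between a conditional rule $\bar Q(\cdot\,|\,x)$ and the row $P^{\bar Q}_{x\cdot}$: writing $D(x)=\{x_1,\dots,x_m\}$, $m=|D(x)|$, and $q_k=\bar Q(x_k|x)$, a direct computation from the given form of $p^{x,a}$ (valid also for the self‑loop index) gives
\[
P^{\bar Q}_{x x_k}=\frac{1-\alpha}{m-1}+\frac{\alpha m-1}{m-1}\,q_k ,\qquad k=1,\dots,m .
\]
Hence, for $\alpha>1/m$, a prescribed stochastic row $(t_k)_k$ supported on $D(x)$ equals $P^{\bar Q}_{x\cdot}$ for some probability vector $(q_k)_k$ if and only if $t_k\ge \frac{1-\alpha}{m-1}$ for all $k$, in which case necessarily $q_k=\big((m-1)t_k-(1-\alpha)\big)/(\alpha m-1)$, and these $q_k$ then automatically sum to $1$ since $\sum_k t_k=1$. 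So it suffices to produce a stochastic matrix $P^*$ with $P^*_{xx'}=0$ for $x'\notin D(x)$, $\mu^* P^*=\mu^*$, and $\delta:=\min_x\min_{x'\in D(x)}P^*_{xx'}>0$, and then to take $\alpha$ close enough to $1$.

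Second, I would construct $P^*$ explicitly as a reversible chain. Using $\mu^*(x)>0$ for every $x$ and that the graph is connected (so $\deg(x):=|D(x)|-1\ge 1$), put $\lambda:=\tfrac12\min_x \mu^*(x)/\deg(x)>0$ and set
\[
P^*_{xx'}:=\frac{\lambda}{\mu^*(x)}\ \ (x\neq x',\ (x,x')\in E),\qquad
P^*_{xx}:=1-\frac{\deg(x)\lambda}{\mu^*(x)},\qquad
P^*_{xx'}:=0\ \ \text{otherwise}.
\]
Then $P^*$ is stochastic (the diagonal entries lie in $[\tfrac12,1]$), it satisfies detailed balance $\mu^*(x)P^*_{xx'}=\lambda=\mu^*(x')P^*_{x'x}$ on edges and trivially on the diagonal, hence $\mu^* P^*=\mu^*$, and $\delta>0$ because it is a minimum of finitely many positive numbers.

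Finally I would choose $\alpha\in(\tfrac12,1)$ with $1-\alpha\le\delta$; then $\alpha m-1\ge 2\alpha-1>0$ and $\frac{1-\alpha}{m-1}\le 1-\alpha\le\delta\le P^*_{xx'}$ for every $x$ and every $x'\in D(x)$, so the inversion formula above defines a genuine conditional distribution $\bar Q^*(x'|x):=\big((|D(x)|-1)P^*_{xx'}-(1-\alpha)\big)/(\alpha|D(x)|-1)$ for which $P^{\bar Q^*}=P^*$. Setting $Q^*:=\mu^*\otimes\bar Q^*\in\mathbb{P}(D)$ then gives $\mu^* P^{\bar Q^*}=\mu^* P^*=\mu^*$, which is the claim. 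I do not expect a serious obstacle; the only point needing care is to verify the positivity bound $P^*_{xx'}\ge\frac{1-\alpha}{m-1}$ and the normalisation of $\bar Q^*$ at once, together with the (harmless) use of connectedness to guarantee $\deg(x)\ge1$. An alternative to the explicit $P^*$ would be a lazy Metropolis–Hastings chain with proposal supported on the edges, which is reversible w.r.t.\ $\mu^*$ and strictly positive on all admissible transitions, but the explicit construction is more economical.
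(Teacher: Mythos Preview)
Your proof is correct and follows essentially the same two-step strategy as the paper: first construct a reversible chain on the graph with stationary distribution $\mu^*$ and all admissible entries strictly positive, then invert the affine relation between $\bar Q(\cdot\,|\,x)$ and the row $P^{\bar Q}_{x\cdot}$ to recover a valid $\bar Q^*$ once $\alpha$ is close to $1$. The only difference is the particular reversible chain: the paper uses the Metropolis-type kernel you mention as an alternative, whereas you take the simpler constant-symmetric-rate chain $\mu^*(x)P^*_{xx'}=\lambda$; the resulting inversion formula and the role of $\alpha$ are identical to the paper's \eqref{eq:Qstar}.
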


\begin{proof}
We use a construction similar to the Metropolis algorithm.
For $x,x'\in S$ let 
$$ \Psi_{xx'} := \left\{ \begin{array}{ll}
\kappa, & \mbox{ if } (x,x')\in E\\
0 & \mbox{ else.} 
\end{array}\right.$$
and
$$ p_{xx'}^{\bar Q^*} := \left\{ \begin{array}{ll}
\Psi_{xx'}\Big( \frac{\mu^*(x')}{\mu^*(x)}\wedge 1\Big), & \mbox{ if } x\neq x'\\
1- \sum_{y\neq x} \Psi_{xy} \Big( \frac{\mu^*(y)}{\mu^*(x)}\wedge 1\Big)&  \mbox{ if }  x=x'.
\end{array}\right.$$
The parameter $\kappa>0$ should be such that $P^{\bar Q^*}$ is a transition matrix.
Then the detailed balance equations 
$$ \mu^*(x) p_{xx'}^{\bar Q^*} = \mu^*(x') p_{x'x}^{\bar Q^*}, \quad x,x'\in S$$
are satisfied and hence $\mu^*$ is a stationary distribution of $P^{\bar Q^*}$.  We now have to determine $\bar Q^*$ s.t.\ $P^{\bar Q^*}$ has the specified form. Let us fix $x\in S$. We have to solve \eqref{eq:pq} for $\bar Q^*$.  We claim that \eqref{eq:pq}  is solved for 
\begin{equation}\label{eq:Qstar}
\bar Q^*(a|x)= \frac{(|D(x)|-1) p_{xa}^{\bar Q^*}-(1-\alpha)}{\alpha |D(x)| -1}.
\end{equation}
This can be seen since
\begin{eqnarray} \nonumber
 \sum_{a\in D(x)} p^{x,a}(x') \bar Q^*(a|x) &=& \bar Q^*(x'|x) \alpha +\frac{1-\alpha}{|D(x)|-1} (1-\bar Q^*(x'|x))=\\ \label{eq:pqformula}
 &=&  \bar Q^*(x'|x) \Big( \frac{\alpha |D(x)|-1}{|D(x)|-1}\Big) + \frac{1-\alpha}{|D(x)|-1}=   p_{xx'}^{\bar Q^*}.
\end{eqnarray}
In order to have $\bar Q^*(a|x)\in [0,1]$ we have to make sure that
$ \alpha\ge  p_{xx'}^{\bar Q^*} \vee (1-p_{xx'}^{\bar Q^*})$ for all $x,x'\in S$ and $\alpha\ge \frac12$.
\end{proof}

\begin{theorem}
The optimal average reward policy for the limit model considered here is the stationary policy $\psi^*=(\varphi^*,\varphi^*,\ldots)$ with $\varphi^*(\mu)= \mu \otimes \bar Q^*$ with $\bar Q^*$ from \eqref{eq:Qstar}. Thus, for $N$ large and $\beta$ close to one, sampling actions from $\bar Q^*$ is $\varepsilon$-optimal for the $\beta$-discounted problem with $N$ individuals.
\end{theorem}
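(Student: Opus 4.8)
The plan is to recognise the model at hand as an instance of Special Case~I and then to chain together the results already established. First I would verify the standing assumptions. Since $S$ and $A=S$ are finite, $D$ is compact, so (A0) holds, and $x\mapsto D(x)$ is trivially upper semicontinuous, so (A1) holds; the reward $r(x,\mu)=\int\Delta(x,x')\mu(dx')$ is linear, hence continuous, in $\mu$, so (A2') holds; and with $Z\sim U[0,1]$ the disturbance space is compact while the transition probabilities $p^{x,a}(x')$ — which in the finite setting carry all the relevant information — are constant, so (A3') is in force. Crucially the reward depends only on $\mu$, the transition is independent of $\mu$, and there is no common noise, so we are exactly in the situation of Special Case~I with $\tilde r(\mu)=\mu\Delta\mu^\top$.

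Next I would assemble the ingredients demanded by the lemma of Special Case~I. The static problem \eqref{eq:optimizprobmudmu2} has a maximiser $\mu^*$ because $\mathbb{P}(S)$ is compact and $\mu\mapsto\mu\Delta\mu^\top$ continuous. Under the hypothesis that $\mu^*(x)>0$ for all $x$ and $\alpha$ is large enough, Lemma~\ref{lem:Qstar} supplies the Metropolis-type kernel $\bar Q^*$ of \eqref{eq:Qstar} for which $\mu^*$ is stationary, and in fact reversible, for $P^{\bar Q^*}$. It then remains to confirm that $P^{\bar Q^*}$ enjoys the Wasserstein ergodicity required in Special Case~I; on the finite connected graph $(S,E)$ this is routine. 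Choosing $\kappa>0$ small enough that $p^{\bar Q^*}_{xx}>0$ for every $x$ makes the chain aperiodic, while $p^{\bar Q^*}_{xx'}>0$ whenever $(x,x')\in E$ — which holds because all $\mu^*(x)>0$ — together with connectedness of the graph makes it irreducible; an irreducible aperiodic finite Markov chain converges to its stationary distribution geometrically in total variation, and since $S$ is a finite metric space this upgrades to geometric convergence in the Wasserstein distance, giving the Wasserstein ergodicity of the appendix and $\mu_k^*\Rightarrow\mu^*$ at a geometric rate. In particular, as already noted in Special Case~I, $|\tilde r(\mu_k^*)-\tilde r(\mu^*)|\le C\,W(\mu_k^*,\mu^*)\le\tilde C\rho^k$, so (A4) holds as well.

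With these verifications in hand, the lemma of Special Case~I yields immediately that $\varphi^*(\mu)=\mu\otimes\bar Q^*$ defines an average reward optimal stationary policy $\psi^*=(\varphi^*,\varphi^*,\ldots)$ for $\widetilde{\rm MDP}$, which is the first assertion. For the second, I would invoke the Corollary following Theorem~\ref{thm:average}: its hypotheses (A0), (A1), (A2'), (A3'), (A4) have all just been checked and $\psi^*$ is average optimal, so for every $\mu$ and every $\varepsilon>0$ there is $\beta(\mu)<1$ and, for $\beta\ge\beta(\mu)$, an $N(\mu,\beta)$ beyond which the policy $\psi^N$ of Theorem~\ref{thm:limit1} is $\varepsilon$-optimal for $\widehat{\rm MDP}$ — equivalently, by Theorem~\ref{thm:equality}, for the $N$-individual MDP. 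By Remark~\ref{rem:randomized}~c) this $\psi^N$ may be taken to be the decentralised randomised policy in which each individual currently at node $x$ draws its intended direction from $\bar Q^*(\cdot\,|x)$, so that precisely ``sampling actions from $\bar Q^*$'' is $\varepsilon$-optimal for the $\beta$-discounted $N$-individual problem when $N$ is large and $\beta$ close to one.

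The only step with genuine content is the verification of the Wasserstein ergodicity of $P^{\bar Q^*}$; everything else is bookkeeping to fit the model into the template of Special Case~I and then to read off the Corollary. Even that step reduces to standard finite-state Markov chain theory once one observes that aperiodicity and irreducibility are bought by taking $\kappa$ small and using connectedness of the graph together with the positivity of $\mu^*$.
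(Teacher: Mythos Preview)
Your proposal is correct and follows essentially the same route as the paper: recognise the situation as Special Case~I, use the Metropolis construction of Lemma~\ref{lem:Qstar} to obtain a kernel with stationary distribution $\mu^*$, argue ergodicity on the finite connected graph so that $G_{\psi^*}(\mu)=\tilde r(\mu^*)$, and then appeal to the Corollary and Remark~\ref{rem:randomized}~c) for the $\varepsilon$-optimality of the decentralised sampling policy. Your write-up is in fact more careful than the paper's terse proof---you explicitly secure aperiodicity via small $\kappa$ (needed for genuine pointwise convergence $\mu_k^*\Rightarrow\mu^*$ rather than only Ces\`aro convergence, and hence for the Wasserstein ergodicity of the appendix and for (A4)) and you spell out the verification of the standing assumptions, whereas the paper simply invokes irreducibility on a finite state space and ``previous discussions''.
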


\begin{proof}
The statement follows from our previous discussions. Note that when we start with an arbitrary $\mu_0^*$, the sequence of distributions generated by $\mu_{k+1}^* = \mu_k^* P^{\bar Q^*}$ converges against $\mu^*$ since the matrix $P^{\bar Q^*}$ is irreducible by construction and we have a finite state space. Thus, $G_\psi(\mu_0^*)$ in \eqref{eq:AR} yields the same limit $\mu^* \Delta (\mu^*)^\top$ which is maximal since it solves \eqref{eq:optimizprobmudmu}.
\end{proof}

\begin{remark}
It is tempting to say that for the discounted problem, once we have reached the stationary distribution after a transient phase we know that the optimal policy is to choose $\bar Q^*$ forever. However, there are only rare cases where the stationary distribution is reached after a finite number of steps (see e.g. \cite{glynn}), so the transient phase will in most cases last forever. 
\end{remark}

\begin{example}
We consider a regular $3\times 3$ grid, i.e. $d=9$ (see Figure \ref{fig:grid9}, left). We set the distance between nodes equal to 1 when there is only one edge between them. Nodes which are connected via 2 edges get the distance 1.4, when there are 3 edges in between 1.7 and finally we set the distance equal to 2.2 when there are 4 edges in between. The distance matrix $\Delta$ is thus given by
$$\Delta := \left(\begin{array}{ccccccccc}
0 & 1 & 1.4 & 1 & 1.4 & 1.7 & 1.4 & 1.7 & 2.2\\
1 & 0 & 1 & 1.4 & 1 & 1.4 & 1.7 & 1.4 & 1.7 \\
1.4 & 1 & 0 & 1.7 & 1.4 & 1 & 2.2 & 1.7 & 1.4\\
1 & 1.4 & 1.7 & 0 & 1 & 1.4 & 1 & 1.4 & 1.7\\
1.4 & 1 & 1.4 & 1 & 0 & 1 & 1.4 & 1 & 1.4\\
1.7 & 1.4 & 1 & 1.4 & 1 & 0 & 1.7 & 1.4 & 1\\
1.4 & 1.7 & 2.2 & 1 & 1.4 & 1.7 & 0 & 1 & 1.4\\
1.7 & 1.4 & 1.7 & 1.4 & 1 & 1.4 & 1 & 0 & 1\\
2.2 & 1.7 & 1.4 & 1.7 & 1.4 & 1 & 1.4 & 1 & 0
\end{array}\right)$$
The optimal distribution of problem \eqref{eq:optimizprobmudmu} is here given by $\mu^*= \frac{1}{37} (7,2,7,2,1,2,7,2,7)$. The masses are illustrated in Figure \ref{fig:grid9}, right picture. The area of the circle is proportional to the corresponding value of $\mu^*$. We think of the proportion of individuals who occupy this node.

   \begin{figure}[!htbp]\begin{center}
\begin{tikzpicture}

 \node (v0) at (0,0) {$7$};
  \node (v1) at (0,2) {$4$};
   \node (v2) at (0,4) {$1$};
    \node (v3) at (2,0) {$8$};
     \node (v4) at (2,2) {$5$};
      \node (v5) at (2,4) {$2$};
       \node (v6) at (4,0) {$9$};
        \node (v7) at (4,2) {$6$};
         \node (v8) at (4,4) {$3$};
         \draw [-] (v0) -- (v3);
             \draw [-] (v0) -- (v3);    
             \draw [-] (v0) -- (v1);    
             \draw [-] (v3) -- (v6);    
             \draw [-] (v3) -- (v4);    
             \draw [-] (v6) -- (v7);   
              \draw [-] (v1) -- (v4);    
              \draw [-] (v4) -- (v7);    
              \draw [-] (v1) -- (v2);    
              \draw [-] (v4) -- (v5);    
              \draw [-] (v7) -- (v8);    
              \draw [-] (v8) -- (v5);
                  \draw [-] (v2) -- (v5);
 
 \fill (6,0) circle (0.435);
\fill (6,2) circle (0.232);
\fill (6,4) circle (0.435);
\fill (8,0) circle (0.232);
\fill (8,2) circle (0.154);
\fill (8,4) circle (0.232);
\fill (10,0) circle (0.435);
\fill (10,2) circle (0.232);
\fill (10,4) circle (0.435);                 
   \node (v9) at (6,0) {};
  \node (v10) at (6,2) {};
   \node (v11) at (6,4) {};
    \node (v12) at (8,0) {};
     \node (v13) at (8,2) {};
      \node (v14) at (8,4) {};
       \node (v15) at (10,0) {};
        \node (v16) at (10,2) {};
         \node (v17) at (10,4) {};
         \draw [-] (v9) -- (v12);
             \draw [-] (v9) -- (v12);    
             \draw [-] (v9) -- (v10);    
             \draw [-] (v12) -- (v15);    
             \draw [-] (v12) -- (v13);    
             \draw [-] (v15) -- (v16);   
              \draw [-] (v10) -- (v13);    
              \draw [-] (v13) -- (v16);    
              \draw [-] (v10) -- (v11);    
              \draw [-] (v13) -- (v14);    
              \draw [-] (v16) -- (v17);    
              \draw [-] (v17) -- (v14);
                  \draw [-] (v11) -- (v14);
  
\end{tikzpicture}
  \caption{Network with labelled nodes (left); Optimal stationary distribution (right).}
         \label{fig:grid9}
\end{center}
 \end{figure}
We set $\alpha=1$ and $\psi=0.25$. Then we obtain from \eqref{eq:Qstar} that the optimal  decision in every node is given by the following transition kernel $\bar Q^*(a|x)$ 
$$\bar Q^* := \left(\begin{array}{ccccccccc}
12c & c & 0 & c & 0 & 0 & 0 & 0 & 0\\
2b & 3b & 2b & 0 & b & 0 & 0 & 0 & 0 \\
0 & c & 12c & 0 & 0 & c & 0 & 0 & 0\\
2b & 0 & 0 & 3b & b & 0 & 2b & 0 & 0\\
0 & 2b & 0 & 2b & 0 & 2b & 0 & 2b & 0\\
0 & 0 & 2b & 0 & b & 3b & 0 & 0 & 2b\\
0 & 0 & 0 & c & 0 & 0 & 12c & c & 0\\
0 & 0 & 0 & 0 & b & 0 & 2b & 3b & 2b\\
0 & 0 & 0 & 0 & 0 & c & 0 & c & 12c
\end{array}\right)$$
where $b=\frac{1}{8}$ and $c=\frac{1}{14}$. So using this decentralized decision throughout the process yields the maximal average reward. In Figure \ref{fig:gridevolution} we see the evolution of the system when all mass starts initially in node 1. The pictures show the distribution of the mass after 2, 4, 8, 16, 32 and 64 time steps. Note that sampling actions from $\bar Q^*$ is also $\varepsilon$-optimal for the system when we have a finite but large number of individuals and $\beta$ is close to one for the discounted reward criterion.

   \begin{figure}[!htbp]
\begin{center}
\begin{tikzpicture}
\fill (0,0) circle (0.134);
\fill (0,1.5) circle (0.297);
\fill (0,3) circle (0.878);
\fill (1.5,0) circle (0);
\fill (1.5,1.5) circle (0.134);
\fill (1.5,3) circle (0.297);
\fill (3,0) circle (0);
\fill (3,1.5) circle (0);
\fill (3,3) circle (0.134);

 \node (v0) at (0,0) {};
  \node (v1) at (0,1.5) {};
   \node (v2) at (0,3) {};
    \node (v3) at (1.5,0) {};
     \node (v4) at (1.5,1.5) {};
      \node (v5) at (1.5,3) {};
       \node (v6) at (3,0) {};
        \node (v7) at (3,1.5) {};
         \node (v8) at (3,3) {};
         \draw [-] (v0) -- (v3);
             \draw [-] (v0) -- (v3);    
             \draw [-] (v0) -- (v1);    
             \draw [-] (v3) -- (v6);    
             \draw [-] (v3) -- (v4);    
             \draw [-] (v6) -- (v7);   
              \draw [-] (v1) -- (v4);    
              \draw [-] (v4) -- (v7);    
              \draw [-] (v1) -- (v2);    
              \draw [-] (v4) -- (v5);    
              \draw [-] (v7) -- (v8);    
              \draw [-] (v8) -- (v5);
                  \draw [-] (v2) -- (v5);
     \fill (5,0) circle (0.238);
\fill (5,1.5) circle (0.306);
\fill (5,3) circle (0.807);
\fill (6.5,0) circle (0.102);
\fill (6.5,1.5) circle (0.158);
\fill (6.5,3) circle (0.306);
\fill (8,0) circle (0.054);
\fill (8,1.5) circle (0.102);
\fill (8,3) circle (0.238);
             
   \node (v9) at (5,0) {};
  \node (v10) at (5,1.5) {};
   \node (v11) at (5,3) {};
    \node (v12) at (6.5,0) {};
     \node (v13) at (6.5,1.5) {};
      \node (v14) at (6.5,3) {};
       \node (v15) at (8,0) {};
        \node (v16) at (8,1.5) {};
         \node (v17) at (8,3) {};
        
             \draw [-] (v9) -- (v12);    
             \draw [-] (v9) -- (v10);    
             \draw [-] (v12) -- (v15);    
             \draw [-] (v12) -- (v13);    
             \draw [-] (v15) -- (v16);   
              \draw [-] (v10) -- (v13);    
              \draw [-] (v13) -- (v16);    
              \draw [-] (v10) -- (v11);    
              \draw [-] (v13) -- (v14);    
              \draw [-] (v16) -- (v17);    
              \draw [-] (v17) -- (v14);
                  \draw [-] (v11) -- (v14);
      \fill (10,0) circle (0.341);
\fill (10,1.5) circle (0.291);
\fill (10,3) circle (0.704);
\fill (11.5,0) circle (0.153);
\fill (11.5,1.5) circle (0.164);
\fill (11.5,3) circle (0.291);
\fill (13,0) circle (0.168);
\fill (13,1.5) circle (0.153);
\fill (13,3) circle (0.341);
             
   \node (v18) at (10,0) {};
  \node (v19) at (10,1.5) {};
   \node (v20) at (10,3) {};
    \node (v21) at (11.5,0) {};
     \node (v22) at (11.5,1.5) {};
      \node (v23) at (11.5,3) {};
       \node (v24) at (13,0) {};
        \node (v25) at (13,1.5) {};
         \node (v26) at (13,3) {};
      
             \draw [-] (v18) -- (v21);    
             \draw [-] (v18) -- (v19);    
             \draw [-] (v21) -- (v24);    
             \draw [-] (v21) -- (v22);    
             \draw [-] (v24) -- (v25);   
              \draw [-] (v19) -- (v22);    
              \draw [-] (v22) -- (v25);    
              \draw [-] (v19) -- (v20);    
              \draw [-] (v22) -- (v23);    
              \draw [-] (v25) -- (v26);    
              \draw [-] (v26) -- (v23);
                 \draw [-] (v20) -- (v23);
           \fill (0,-4) circle (0.41);        
\fill (0,-2.5) circle (0.264);
\fill (0,-1) circle (0.577);
\fill (1.5,-4) circle (0.195);
\fill (1.5,-2.5) circle (0.164);
\fill (1.5,-1) circle (0.264);
\fill (3,-4) circle (0.269);
\fill (3,-2.5) circle (0.195);
\fill (3,-1) circle (0.41);

 \node (v27) at (0,-4) {};
  \node (v28) at (0,-2.5) {};
   \node (v29) at (0,-1) {};
    \node (v30) at (1.5,-4) {};
     \node (v31) at (1.5,-2.5) {};
      \node (v32) at (1.5,-1) {};
       \node (v33) at (3,-4) {};
        \node (v34) at (3,-2.5) {};
         \node (v35) at (3,-1) {};
         \draw [-] (v27) -- (v30);
             \draw [-] (v27) -- (v30);    
             \draw [-] (v27) -- (v28);    
             \draw [-] (v30) -- (v33);    
             \draw [-] (v30) -- (v31);    
             \draw [-] (v33) -- (v34);   
              \draw [-] (v28) -- (v31);    
              \draw [-] (v31) -- (v34);    
              \draw [-] (v28) -- (v29);    
              \draw [-] (v31) -- (v32);    
              \draw [-] (v34) -- (v35);    
              \draw [-] (v35) -- (v32);
                  \draw [-] (v29) -- (v32);      
    \fill (5,-4) circle (0.433);
\fill (5,-2.5) circle (0.242);
\fill (5,-1) circle (0.474);
\fill (6.5,-4) circle (0.223);
\fill (6.5,-2.5) circle (0.164);
\fill (6.5,-1) circle (0.242);
\fill (8,-4) circle (0.397);
\fill (8,-2.5) circle (0.223);
\fill (8,-1) circle (0.433);
             
   \node (v36) at (5,-4) {};
  \node (v37) at (5,-2.5) {};
   \node (v38) at (5,-1) {};
    \node (v39) at (6.5,-4) {};
     \node (v40) at (6.5,-2.5) {};
      \node (v41) at (6.5,-1) {};
       \node (v42) at (8,-4) {};
        \node (v43) at (8,-2.5) {};
         \node (v44) at (8,-1) {};
        
             \draw [-] (v36) -- (v39);    
             \draw [-] (v36) -- (v37);    
             \draw [-] (v39) -- (v42);    
             \draw [-] (v39) -- (v40);    
             \draw [-] (v42) -- (v43);   
              \draw [-] (v37) -- (v40);    
              \draw [-] (v40) -- (v43);    
              \draw [-] (v37) -- (v38);    
              \draw [-] (v40) -- (v41);    
              \draw [-] (v43) -- (v44);    
              \draw [-] (v44) -- (v41);
                  \draw [-] (v38) -- (v41);
    \fill (10,-4) circle (0.435);
\fill (10,-2.5) circle (0.233);
\fill (10,-1) circle (0.438);
\fill (11.5,-4) circle (0.231);
\fill (11.5,-2.5) circle (0.164);
\fill (11.5,-1) circle (0.233);
\fill (13,-4) circle (0.432);
\fill (13,-2.5) circle (0.231);
\fill (13,-1) circle (0.435);
             
   \node (v45) at (10,-4) {};
  \node (v46) at (10,-2.5) {};
   \node (v47) at (10,-1) {};
    \node (v48) at (11.5,-4) {};
     \node (v49) at (11.5,-2.5) {};
      \node (v50) at (11.5,-1) {};
       \node (v51) at (13,-4) {};
        \node (v52) at (13,-2.5) {};
         \node (v53) at (13,-1) {};
      
             \draw [-] (v45) -- (v48);    
             \draw [-] (v45) -- (v46);    
             \draw [-] (v48) -- (v51);    
             \draw [-] (v48) -- (v49);    
             \draw [-] (v51) -- (v52);   
              \draw [-] (v46) -- (v49);    
              \draw [-] (v49) -- (v52);    
              \draw [-] (v46) -- (v47);    
              \draw [-] (v49) -- (v50);    
              \draw [-] (v52) -- (v53);    
              \draw [-] (v53) -- (v50);
                 \draw [-] (v47) -- (v50);
\end{tikzpicture}
  \caption{Evolution of the individuals using the optimal randomized decision when all start in node 1, after $n=2,4,8,16,32$ and $64$ time steps (left to right, above to below).}
         \label{fig:gridevolution}
\end{center}
   \end{figure}
\end{example}

\subsubsection{With common noise}
Next we suppose that $\alpha$ depends on the common noise $Z^0$.  In this case the maximal average reward which can be achieved is less or equal to the case without common noise since the sequence of distributions is stochastic and may deviate from the optimal one. We simplify things a little bit since we assume here that $|D(x)| = \gamma$ independent of $x$. From the previous section, equation \eqref{eq:pqformula} we know that we can write 
$$ p^{\bar Q}_{xx'} = \bar Q(x'|x) \frac{\alpha(Z^0)\gamma-1}{\gamma-1}+\frac{1-\alpha(Z^0)}{\gamma-1}. $$
In matrix notation $$P^{\bar Q} = \frac1{\gamma-1} (1-\alpha(Z^0)) U + \frac1{\gamma-1} (\alpha(Z^0)\gamma-1)\bar Q$$ where $U$ is a $d\times d$ matrix containing ones only and $\bar Q=(\bar Q(x'|x)).$ Here the situation is more complicated, in particular the next empirical distribution of individuals is stochastic and given by
$$\mu_{n+1}= \frac1{\gamma-1} (1-\alpha(Z^0)) e +\frac1{\gamma-1}  (\alpha(Z^0)\gamma-1)\mu_n \bar Q_n $$
with $e=(1,\ldots,1)\in \R^d$. Plugging this into the reward function yields

\begin{eqnarray}\nonumber
{(\gamma-1)^2}\Eop\Big[ \mu_{n+1} \Delta \mu_{n+1}^\top\Big] &=& \Eop[(1-\alpha(Z^0))^2] e \Delta e^\top+ 2 \Eop[(1-\alpha(Z^0)) ( \alpha(Z^0)\gamma-1)]  (e \Delta \bar Q_n^\top \mu_n^\top) \\ \label{eq:target}
&&+ \Eop[(\alpha(Z^0)\gamma-1)^2] \mu_n \bar Q_n \Delta \bar Q_n^\top \mu_n^\top.
\end{eqnarray}
Now consider the problem
\begin{equation}
\left\{ \begin{array}{l}
 2 \Eop[(1-\alpha(Z^0)) ( \alpha(Z^0)\gamma-1)]  (e \Delta \nu^\top) + \Eop[(\alpha(Z^0)\gamma-1)^2] \nu \Delta \nu^\top \to \max\\
\nu\in \mathbb{P}(S)
\end{array} \right.
\end{equation}
Obviously this problem has an optimal solution  $\nu^*$ since we maximize a continuous function over a compact set. Now $\nu$ corresponds to $\mu_n \bar Q_n$ in  \eqref{eq:target}. In case it is possible to choose for all $\mu\in \mathbb{P}(S)$ a matrix $\bar Q$ s.t. $\mu \bar Q= \nu^*$, then this would be the optimal strategy, since we would get the maximal expected reward in each step. This is for example possible if the graph is complete. Then we can simply choose $\bar Q$ as the matrix with identical rows which consist of $\nu^*$. 
 
\subsection{Positioning on a Market Place}
Suppose we have a rectangular market place like in Figure \ref{fig:market}. The state $\mu$ represents the distribution of individuals over the market place. Point A is an ice cream vendor. The aim of the individuals is to keep distance to others and be as close as possible to the ice cream vendor. Thus, $S\subset \R^2$ is the rectangle $BCED$ and the one-stage reward is
$$\tilde r(\mu)= \int\int d(x,y)\mu(dx)\mu(dy) -\int d(x,A) \mu(dx).$$ 
In what follows in order to simplify the computation we choose $d(x,y)=\|x-y\|^2$ for $x,y\in S$. We want to solve \eqref{eq:optimizprobmudmu} in this case. Let us formulate the problem with the help of random variables. Let $X=(X_1,X_2), Y=(Y_1,Y_2)$ be independent r.v.\ having distribution $\mu$. Then $\tilde r(\mu)$ is the same as
$$ \sum_{i=1}^2 \Eop(X_i-Y_i)^2 - \Eop(X_i-A_i)^2.$$
Thus, we can treat the margins separately and the dependence between them is not interesting for the reward. Now obviously since $X$ and $Y$ both have the same distribution we can write
\begin{eqnarray*}
\Eop(X_i-Y_i)^2 - \Eop(X_i-A_i)^2 &=& \Eop X_i^2 + 2 \Eop X_i (A_i-\Eop X_i) - A_i^2.
\end{eqnarray*}
Suppose we fix $\Eop X_i$ for a moment. Since $x\mapsto x^2$ is convex, the distribution which maximizes the expression is maximal in convex order, given the fixed expectation. But this distribution is due to the convexity property concentrated on the endpoints of the interval. Thus we can restrict to random variables $X_1$ which have mass $p\in [0,1]$ on $B_1$ and $1-p$ on $C_1$, i.e. we maximize
$$B_1^2 p+C_1^2 (1-p)+2(B_1 p+C_1 (1-p)) (A_1-B_1p-C_1 (1-p))$$ over $p\in [0,1]$. 

   \begin{figure}[!htbp]\begin{center}
\begin{tikzpicture}

 \node (v0) at (0,0) {B};
  \node (v1) at (0,3) {D};
   \node (v2) at (4,0) {C};
    \node (v3) at (4,3) {E};
 
         \node (v8) at (3,2) {A};
         \draw [-] (v0) -- (v1);
             \draw [-] (v0) -- (v2);    
             \draw [-] (v3) -- (v1);    
             \draw [-] (v3) -- (v2);

 \fill (2.5,2) circle (0.1);
 
  \node (v0) at (6,0) {B};
  \node (v1) at (6,3) {D};
   \node (v2) at (10,0) {C};
    \node (v3) at (10,3) {E};
 
         \node (v8) at (9,2) {A};
         \draw [-] (v0) -- (v1);
             \draw [-] (v0) -- (v2);    
             \draw [-] (v3) -- (v1);    
             \draw [-] (v3) -- (v2);

 \fill (8.5,2) circle (0.1);
 
  \fill (6,0) circle (35/192);
   \fill (10,0) circle (45/192);
    \fill (6,3) circle (49/192);
     \fill (10,3) circle (63/192);
  
\end{tikzpicture}
  \caption{Market place with ice cream vendor (left). Optimal distribution in example (right)}
         \label{fig:market}
\end{center}
 \end{figure}
 
 The solution is given by $p= \frac14 + \frac{C_1-A_1}{2(C_1-B_1)}$. Since the joint distribution does not matter we can choose independent margins and obtain 
 \begin{eqnarray*}
  \mu^* &=& \delta_B \Big(  \frac14 + \frac{C_1-A_1}{2(C_1-B_1)}\Big)\Big(  \frac14 + \frac{D_2-A_2}{2(D_2-B_2)}\Big) + \delta_C  \Big(  \frac34 - \frac{C_1-A_1}{2(C_1-B_1)}\Big)\Big(  \frac14 + \frac{D_2-A_2}{2(D_2-B_2)}\Big)\\
  && +  \delta_D \Big(  \frac14 + \frac{C_1-A_1}{2(C_1-B_1)}\Big)\Big(  \frac34 - \frac{D_2-A_2}{2(D_2-B_2)}\Big)+\delta_E  \Big(  \frac34 - \frac{C_1-A_1}{2(C_1-B_1)}\Big)\Big(  \frac34 - \frac{D_2-A_2}{2(D_2-B_2)}\Big).
\end{eqnarray*} 
This is the target distribution which should be attained. For a numerical example we choose $B(0,0), C(4,0), D(0,3), E(4,3)$ and $A(2.5,2)$. In this case we obtain
$$ \mu^*= \delta_B \frac{35}{192}+\delta_C \frac{45}{192}+\delta_D \frac{49}{192}+\delta_E \frac{63}{192}.$$
The distribution is illustrated in Figure \ref{fig:market}, (right).

  Depending on how the transition law precisely looks like, if one is able to choose $\bar Q^*$ such that $\mu^*$ is the stationary distribution of $P^{\bar Q^*}$, the problem is solved. Of course the optimal distribution $\mu^*$ depends on what kind of distance $d$ we choose. Varying  the metric for the distance leads to interesting optimization problems.

\section{Conclusion}
We have seen that the average reward mean-field problem can in some cases be solved rather easily by computing an optimal measure from a static optimization problem. The policy which is obtained in this way is $\varepsilon$-optimal for the $\beta$-discounted $N$-individuals problem where $N$ is large and $\beta$ close to one. The static optimization problem for measures gives rise to some interesting mathematical questions.

\section{Appendix}\label{sec:appendix}
\subsection{Auxiliary Results}

The following result can be found in \cite{bl}, Lemma 7.2:

\begin{lemma}\label{lem:unifconf}
Let $X$ be a separable metric space, $Y$ be compact metric and $f:X \times Y\to \R$ continuous. Then $x_n\to x$ for $n\to\infty$ implies
$$\lim_{n\to \infty} \sup_{y\in Y} |f(x_n,y)-f(x,y)|=0. $$ 
\end{lemma}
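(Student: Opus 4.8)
The plan is to argue by contradiction, exploiting the compactness of $Y$ together with the continuity of $f$ to produce a contradiction with the pointwise behaviour forced by $x_n \to x$. Suppose the conclusion fails. Then there is an $\varepsilon > 0$ and a subsequence (not relabelled) $x_n \to x$ together with points $y_n \in Y$ such that $|f(x_n, y_n) - f(x, y_n)| \ge \varepsilon$ for all $n$. Since $Y$ is compact metric, it is sequentially compact, so after passing to a further subsequence we may assume $y_n \to y^*$ for some $y^* \in Y$.

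The key step is then to split the difference using the triangle inequality:
\begin{equation*}
|f(x_n, y_n) - f(x, y_n)| \le |f(x_n, y_n) - f(x, y^*)| + |f(x, y^*) - f(x, y_n)|.
\end{equation*}
For the first term, note that $(x_n, y_n) \to (x, y^*)$ in the product space $X \times Y$, so by continuity of $f$ at $(x, y^*)$ it tends to $0$. For the second term, $y_n \to y^*$ and continuity of $f(x, \cdot)$ (again a consequence of the joint continuity) give convergence to $0$. Hence the left-hand side tends to $0$, contradicting the assumption that it is bounded below by $\varepsilon$. This establishes the uniform convergence over $y \in Y$.

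I do not expect any genuine obstacle here; the only point requiring a little care is to make sure the extraction of a convergent subsequence of $(y_n)$ is legitimate, which is exactly where the compactness hypothesis on $Y$ enters, and to record that a negation of ``$\sup_{y} |f(x_n,y)-f(x,y)| \to 0$'' furnishes not just one $y_n$ per $n$ but a subsequence of indices along which the sup is $\ge \varepsilon$, which is all that is needed. Since this is a cited result (from \cite{bl}), the proof in the paper may well simply refer to that reference rather than reproduce the argument, but the contradiction argument above is the natural self-contained route.
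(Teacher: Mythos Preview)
Your argument is correct; the contradiction via sequential compactness of $Y$ and joint continuity of $f$ is the standard route and there is no gap. As you anticipated, the paper does not reproduce a proof at all but simply records the statement and refers to \cite{bl}, Lemma~7.2.
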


\subsection{Wasserstein Ergodicity}
For the following definitions and results see \cite{rs}.

\begin{definition}
For two probability measures $\mu,\nu$ on $S$, the dual representation of the Wasserstein distance is given by
$$ W(\mu,\nu) = \sup_{\|f\|_{Lip}\le 1} \Big| \int f(x) (\nu(dx)-\mu(dx))\Big|$$
where 
$$ \|f\|_{Lip} = \sup_{x,y\in S, x\neq y} \frac{|f(x)-f(y)|}{|x-y|}.$$
\end{definition}
Note that convergence in Wasserstein metric implies weak convergence when we are on compact sets.

\begin{definition}
A transition kernel $P(\cdot| x)$ from $S$ to $S$ is called {\em Wasserstein ergodic} when there exist constants $\rho\in (0,1)$ and $C>0$ s.t. for all $n\in\N$
$$ \sup_{x,y\in S, x\neq y} \frac{W(P^n(\cdot|x), P^n(\cdot|y))}{|x-y|}\le C \rho^n. $$
\end{definition}
Suppose $P$ is Wasserstein ergodic and has stationary distribution $\mu^*$ which means that  $\mu^* = \int P(\cdot|x)\mu^*(dx)=:\mu^* P$. Then for any $\mu_0\in \mathbb{P}$ and $\mu_n = \mu_0  P^n$ we obtain $W(\mu_n,\mu^*) \le C \rho^n$.

\subsection{Additional Proofs}

\subsubsection{ Proof of Theorem \ref{thm:model1}:}

We first show that $U:\mathbb{M}\to\mathbb{M}$. Hence, let $v\in \mathbb{M}$.
Since $r$ and $v$ are bounded, $Uv$ is bounded. (A2) implies that $(\mathbf{x},\mathbf{a}) \mapsto \mathbf{r}(\mathbf{x},\mathbf{a})$ is upper semicontinuous. This follows since $(\mathbf{x}_n,\mathbf{a}_n) \to (\mathbf{x},\mathbf{a})$ for $n\to\infty$ implies $x_n^i\to x^i, a_n^i \to a^i$, $i=1,\ldots,N$ and  $\mu[\mathbf{x}_n] \to \mu[\mathbf{x}]$ (in weak topology) for $n\to\infty$. Moreover, the sum of upper semicontinuous functions is upper semicontinuous.
And finally due to (A3) and the fact that $v$ is upper semicontinuous
$$  (\mathbf{x},\mathbf{a})\mapsto  \Eop\Big[v\big(  \mathbf{T}(\mathbf{x}, \mathbf{a}, \mu[\mathbf{x}], \mathbf{Z}, Z^0)\big)\Big]\Big\}$$
is upper semicontinuous. This together implies that
\begin{equation}\label{eq:rvmap}
 (\mathbf{x},\mathbf{a})\mapsto  \mathbf{r}(\mathbf{x},\mathbf{a})+\Eop\Big[v\big(  \mathbf{T}(\mathbf{x}, \mathbf{a}, \mu[\mathbf{x}], \mathbf{Z}, Z^0)\big)\Big]\Big\}
 \end{equation} 
is upper semicontinuous and $U:\mathbb{M}\to\mathbb{M}$ follows from Proposition 2.4.3 in \cite{br11}.

Next note  that  $\mathbb{M}$  together with the sup-norm $\| v\| = \sup_{\mathbf{x}\in S^N} |v(\mathbf{x})|$  is a Banach space. Also $0\in \mathbb{M}$ which is the function identical to zero. Moreover for $v,w\in \mathbb{M}$:
\begin{eqnarray*}
\| Uv-Uw\| &\le & \beta \sup_{\mathbf{a}\in D(\mathbf{x})} \Big\{ \Eop\Big[v\big(  \mathbf{T}(\mathbf{x}, \mathbf{a}, \mu[\mathbf{x}], \mathbf{Z}, Z^0)\big)-w\big(  \mathbf{T}(\mathbf{x}, \mathbf{a}, \mu[\mathbf{x}], \mathbf{Z}, Z^0)\big)\Big] \Big\} \\
&\le& \beta \|v-w\|
\end{eqnarray*} 
thus $U$ is contracting since $\beta\in(0,1)$.  Next, the properties in (A0), (A1) imply  that $\mathbf{D}(\mathbf{x})$ is compact and $\mathbf{x}\mapsto \mathbf{D}(\mathbf{x})$ is upper semicontinuous. From the first part of the proof we know that the mapping in \eqref{eq:rvmap} is upper semicontinuous. Thus,  the existence result for maximizers from Proposition 2.4.3 in \cite{br11} implies that for all $v\in \mathbb{M}$ there exists a maximizer $f\in F$.

 Altogether, we have shown all assumptions from Theorem 7.3.5  in \cite{br11} which directly implies the statement.\hfill$\Box$

\subsubsection{ Proof of Theorem \ref{theo:JN}:}
We only have to show that $\hat U :\mathbb{\hat M}\to \mathbb{\hat M}$.
The statement then follows from Theorem \ref{thm:model1} and  Theorem \ref{thm:equality} since we can identify policies, rewards, transition laws and operators. 
 To show $\hat U :\mathbb{\hat M}\to \mathbb{\hat M}$ we use Proposition 2.4.3 in \cite{br11}. Thus, we have to check the following continuity and compactness assumptions. 
\begin{itemize}
\item[(i)] $\hat D(\mu)$ is compact and $\mu \mapsto \hat D(\mu)$ is upper semicontinuous on $\Pop_N(S)$.
\item[(ii)] $(\mu,Q)\mapsto \hat r(\mu,Q)$ is upper semicontinuous and bounded on $\hat D$.
\item[(iii)] for $v\in \mathbb{\hat M}$ the mapping $(\mu,Q)\mapsto  \Eop v( \hat{ T}(\mu,Q,\mathbf{Z},Z^0))$ is upper semicontinuous and bounded on $\hat D$.
\end{itemize}

For (i) first note that $\hat D(\mu)$ is compact for all $\mu$ since $D$ is compact.  Upper semicontinuity of  $\mu \mapsto \hat D(\mu)$ can be seen as follows: Let $(\mu_n) \subset \Pop_N(S)$ and $\mu_n \Rightarrow \mu$ for $n\to\infty$ and $Q_n\in \hat D(\mu_n)$. Since $\hat D$ is compact there exists an accumulation point $Q\in \mathbb{P}_N(D)$ s.t. $Q_{n_k}\Rightarrow Q$ for a subsequence $(n_k),$ and the sequence of the first margins converges to $\mu$ hence $Q\in \hat D(\mu)$.

Part (ii) follows from the fact that 
$$ \hat{r}(\mu,Q) =\frac1N \sum_{i=1}^N r(x_i,a_i, \mu)$$
for $Q=\mu[(\mathbf{x},\mathbf{a})]$, (A2) and the observation that $(Q_n)\subset\Pop_N(D), Q_n \Rightarrow Q\in \Pop_N(D)$ implies pointwise convergence $x_i^{(n)} \to x_i, a_i^{(n)}\to a_i$ for $n\to\infty$, $i=1,\ldots,N$.

Finally for (iii) note that $$(\mu,Q) \mapsto \hat T(\mu,Q,Z,Z^0)$$
is continuous on $\hat D$ which follows from (A3). This implies (iii) and the statement follows from Proposition 2.4.3 in \cite{br11}. \hfill$\Box$

\subsubsection{ Proof of Theorem \ref{theo:MFLexi}:}
In order to show the statement we use Theorem 7.3.5 in \cite{br11}. Thus, we  first  prove that $\tilde U:\tilde{\mathbb{M}}\to\tilde{\mathbb{M}}$. We do this by showing that
\begin{itemize}
\item[(i)] $\tilde D(\mu)$ is compact and $\mu \mapsto \tilde D(\mu)$ is continuous.
\item[(ii)] $(\mu,Q)\mapsto \tilde r(\mu,Q)$ is continuous and bounded.
\item[(iii)] for $v\in \mathbb{M}$ the mapping $(\mu,Q)\mapsto \Eop v(\tilde T(\mu,Q,Z^0))$ is continuous and bounded.
\end{itemize}

Consider (i): $\tilde D(\mu)$ is compact for all $\mu$ since $D$ is compact. Next the mapping $\mu \mapsto \tilde D(\mu)$ is continuous if and only if it is upper and lower semicontinuous.   Upper semicontinuity follows as in the proof of Lemma \ref{theo:JN}.
Lower semicontinuity means that when $\mu_n \Rightarrow \mu\in \mathbb{P}(S)$ for $n\to\infty$, then for each $Q\in \tilde D(\mu)$ we find a sequence $(Q_n)$ with  $Q_n \Rightarrow Q$ and $Q_n \in \tilde D(\mu_n)$. This can be achieved as follows: We can decompose $Q$ into $Q=\mu \otimes \bar{Q}$. Now define $Q_n := \mu_n \otimes \bar{Q}$, then the constructed sequence has the desired properties.

For (ii) suppose that  $(\mu_n, Q_n) \Rightarrow (\mu,Q)$ for $n\to\infty$. We have to show that
$$\lim_{n\to\infty }\int_{D} r(x,a,\mu_n) Q_n(d(x,a)) = \int_{D} r(x,a,\mu) Q(d(x,a)).$$  We obtain:
\begin{eqnarray*}
&&\left|  \int_{D} r(x,a,\mu_n) Q_n(d(x,a)) - \int_{D} r(x,a,\mu) Q(d(x,a))\right| \le \\
&\le & \int_{D}   \left|  r(x,a,\mu_n)-r(x,a,\mu)   \right|  Q_n(d(x,a)) +  \left|  \int_{D}   r(x,a,\mu) \big(Q_n(d(x,a))-Q(d(x,a))\big)\right|  \le \\
&\le & \sup_{(x,a)\in D} \left|  r(x,a,\mu_n)-r(x,a,\mu)   \right| +  \left|  \int_{D}   r(x,a,\mu) \big(Q_n(d(x,a))-Q(d(x,a))\big)\right|.
\end{eqnarray*}
The first term converges to zero due to  Assumption (A2') and Lemma \ref{lem:unifconf}. The second term converges to zero since $Q_n\Rightarrow Q$ for $n\to\infty$  and (A2'). Boundedness follows from the boundedness of $r$.

Next we show (iii). 
Boundedness is clear. In order to show continuity  we first consider the mapping 
\begin{equation}\label{eq:contmap}
(\mu,Q)\mapsto  \tilde{T}(\mu,Q,z^0)= \int_{ D}  p^{x,a,\mu,z^0}  Q(d(x,a))
\end{equation} 
for fixed $z^0$.
We claim that this mapping is continuous. Let $h: S\to \R$ be continuous and bounded. By $\Pop^Z$ we denote the distribution of the r.v.\ $Z_n^i$. We have to show that 
\begin{eqnarray*}
(\mu,Q) & \mapsto & \int_S h(y) \tilde T(\mu,Q,z^0)(dy)= \int_D \int_{\mathcal{Z}} h( T(x,a,\mu,z,z^0)) \Pop^Z(dz) Q(d(x,a))
\end{eqnarray*}
is a.s.\ continuous. Let $(\mu_n,Q_n) \to (\mu,Q)$. We obtain:
\begin{eqnarray*}
&& \left| \int_D \int_{\mathcal{Z}} h( T(x,a,\mu_n,z,z^0)) \Pop^Z(dz) Q_n(d(x,a)) -
\int_D \int_{\mathcal{Z}} h( T(x,a,\mu,z,z^0)) \Pop^Z(dz) Q(d(x,a))\right| \\
&\le &   \int_D \int_{\mathcal{Z}} \left| h( T(x,a,\mu_n,z,z^0))-h( T(x,a,\mu,z,z^0)) \right|   \Pop^Z(dz) Q_n(d(x,a)) \\
&& +  \left| \int_D \int_{\mathcal{Z}} h( T(x,a,\mu,z,z^0)) \Pop^Z(dz) \big( Q_n(d(x,a)) -
 Q(d(x,a))\big) \right| \\
 &\le &  \int_{\mathcal{Z}} \sup_{(x,a)\in D}\left| h( T(x,a,\mu_n,z,z^0))-h( T(x,a,\mu,z,z^0)) \right|   \Pop^Z(dz)  \\
&& +  \left| \int_D \int_{\mathcal{Z}} h( T(x,a,\mu,z,z^0)) \Pop^Z(dz) \big( Q_n(d(x,a)) -
 Q(d(x,a))\big) \right| 
\end{eqnarray*}
In the first term we can interchange the limit $\lim_{n\to\infty}$ and the integral due to dominated convergence and obtain
$$ \lim_{n\to\infty } \sup_{(x,a)\in D}\left| h( T(x,a,\mu_n,z,z^0))-h( T(x,a,\mu,z,z^0)) \right| =0$$
due to (A2') and Lemma \ref{lem:unifconf}. The second term converges to zero for $n\to\infty$ since $(x,a) \mapsto h( T(x,a,\mu,z,z^0))$ is continuous due to (A3).  In total we have shown that the mapping in \eqref{eq:contmap} is continuous. 

Finally take $v\in \tilde{\mathbb{M}}$ and  pick a sequence with $(\mu_n,Q_n)\to (\mu,Q)$ for $n\to\infty$. We obtain with dominated convergence, the continuity  of $v$ and the continuity of \eqref{eq:contmap}
\begin{eqnarray*}
&&\lim_{n\to\infty } \Eop v( \tilde{T}(\mu_n,Q_n,Z^0)) = \Eop  v( \lim_{n\to\infty }  \tilde{T}(\mu_n,Q_n,Z^0)) 
=   \Eop   v( \tilde{T}(\mu,Q_,Z^0)) 
\end{eqnarray*}
which shows the stated continuity of $(\mu,Q)\mapsto \Eop v( \tilde{T}(\mu,Q,Z^0))$. 
Now  Proposition 2.4.8 in \cite{br11} implies that $\tilde U : \tilde{\mathbb{M}} \to \tilde{\mathbb{M}}$. 

The next condition in Theorem 7.3.5 \cite{br11} is that $\tilde U$ is contracting on $ \tilde{\mathbb{M}}$. But this follows along the same lines as in the  proof of Theorem \ref{thm:model1}. Finally, the existence of maximizers which is another assumption in Theorem 7.3.5 \cite{br11} follows again from Proposition 2.4.8 in \cite{br11}.

In total the statement is a consequence of  Theorem 7.3.5 in \cite{br11} with the set $\tilde{\mathbb{M}}$.  \hfill$\Box$

\subsubsection{ Proof of Theorem \ref{thm:limit1}}

We partition the proof into three steps.

Step 1: Let $Q^N\Rightarrow Q$ for $N\to\infty$  where $ Q^N \in \mathbb{P}_N(D).$ Hence there exist $\mathbf{x}^N=(x_1^N,\ldots,x_N^N)$ and  $\mathbf{a}^N=(a_1^N,\ldots,a_N^N)\in \mathbf{D}(\mathbf{x}^N)$ s.t.\ $\mu[(\mathbf{x}^N,\mathbf{a}^N)]=Q^N$ and $\mu[\mathbf{x}^N]=\mu^N$ and $Q^N\in \hat D(\mu^N)$.

Further, suppose we fix $\omega\in \Omega$ and consider a realization $\mathbf{z}^N=(z_1^N,\ldots,z_N^N)$ of $(Z_1^N,\ldots,Z_N^N)$ and $z^0$ of $Z_1^0.$ We show that $\hat T(\mu^N, Q^N,\mathbf{z}^N,z^0) \Rightarrow \tilde T(\mu,Q,z^0)$ where $\mu$ is the first margin of $Q$. In order to show this let $h:S\to\R$ be bounded and continuous. We obtain:
\begin{eqnarray}\nonumber
\int h(y) \hat T(\mu^N, Q^N,\mathbf{z}^N,z^0)(dy) &=& \frac1N \sum_{i=1}^N h\big(T(x_i^N,a_i^N,\mu^N,z_i^N,z^0)\big)\\ \nonumber
&=& \frac1N \sum_{i=1}^N \Big( h\big(T(x_i^N,a_i^N,\mu^N,z_i^N,z^0)\big)-h\big(T(x_i^N,a_i^N,\mu,z_i^N,z^0)\big)\Big) \\ \label{eq:hdiff}
&& + \frac1N \sum_{i=1}^N h\big(T(x_i^N,a_i^N,\mu,z_i^N,z^0)\big).
\end{eqnarray}
Since $h,T$ are continuous, $D, \mathcal{Z}$ are compact and $\mu^N\Rightarrow \mu$ we can for all $\varepsilon>0$ choose $N$ large enough s.t.
$$ \sup_{(x,a,z,z^0)\in D\times \mathcal{Z}^2} | h\big( T(x,a,\mu^N,z,z^0)\big) - h\big(T(x,a,\mu,z,z^0)\big)|\le\varepsilon.$$
Hence the first term in \eqref{eq:hdiff} converges to zero for $N\to\infty$. Let $\mu^N_z$ be the empirical measures of $\mathbf{z}^N$. We obtain:
\begin{eqnarray}\label{eq:empm}
\frac1N \sum_{i=1}^N h\big(T(x_i^N,a_i^N,\mu,z_i^N,z^0)\big)&=&
\int h\big( T(x,a,\mu,z,z^0)\big) Q^N(d(x,a)) \mu_z^N(dz).
\end{eqnarray}
 Since $Q^N \otimes \mu_z^N \Rightarrow Q\otimes \mathbb{P}^Z$ for $N\to\infty$ by the Glivenko-Cantelli Theorem for $N\to\infty$, the r.h.s.\ of   \eqref{eq:empm} converges to 
$$\int h\big( T(x,a,\mu,z,z^0)\big) Q(d(x,a)) \Pop^Z(dz) =\int h(y) \tilde T(\mu,Q,z^0)(dy).$$
Thus, we get $\hat T(\mu^N, Q^N,\mathbf{Z}^N,Z^0) \Rightarrow \tilde T(\mu,Q,Z^0)$ $\Pop$-a.s. In the proof of Theorem \ref{theo:MFLexi} we have shown that this implies $\lim_{N\to\infty} \tilde r(\mu^N,Q^N) = \tilde r(\mu,Q)$.

Step 2: Suppose $\psi^N=(\varphi_0^N,\varphi_1^N,\ldots)$ is an arbitrary policy for  $\widehat{\rm MDP}$. Let $Q_0^N = \varphi_0^N(\mu_0^N)$. Now $(Q_0^N)$ is a sequence of measures on the compact space $D$. Hence there is a subsequence $(m_N)$ s.t.\ $Q_0^{m_N}\Rightarrow Q_0\in \Pop(D)$ for $N\to\infty$. From Step 1 we know that  $\lim_{N\to\infty} \tilde r(\mu_0^{m_N},Q_0^{m_N}) = \tilde r(\mu_0,Q_0)$ where $\mu_0$ is the first margin of $Q_0$ and that
\begin{equation}\label{eq:mulim}
\mu_1^{m_N}=\hat T(\mu_0^{m_N},Q_0^{m_N},\mathbf{Z}_1,Z_1^0) \Rightarrow \tilde T(\mu_0,Q_0,Z_1^0), \; \Pop-\rm{a.s}.
\end{equation} 
Let $Q_1^{m_N}=\varphi_1(\mu_1^{m_N})$ and choose again a subsequence $m_N'$ s.t.\ $Q_1^{m_N'} \Rightarrow Q_1$ where the first margin of $Q_1$ is $\tilde T(\mu_0,Q_0,Z_1^0)$. When we consider the first $L\in \N$ transitions in that way, we find a joint subsequence (for convenience still denoted by $m_N$) s.t.\ for $N\to\infty$ $\Pop$-a.s.
$$(\mu_0^{m_N}, Q_0^{m_N},\mu_1^{m_N}, Q_1^{m_N},\ldots,\mu_L^{m_N}, Q_L^{m_N}) \Rightarrow (\mu_0, Q_0, \mu_1,Q_1,\ldots,\mu_L,Q_L)  $$
and where the limit is by construction an admissible state-action sequence for $\widetilde{\rm MDP}$. This is because the subsequences are taken such that the limits satisfy $Q_n\in \Pop(D)$ that the first margin of $Q_n$ is $\mu_n$ and finally because of \eqref{eq:mulim} which is by induction not only satisfied for time point one, but also for $n=1,\ldots,L$.  Hence
$$ \lim_{N\to\infty}\sum_{k=0}^L \beta^k \Eop[ \tilde r(\mu_k^{m_N},Q_k^{m_N})] = \sum_{k=0}^L \beta^k \Eop[ \tilde r(\mu_k,Q_k)]. $$
Since $|r|\le C$ we can choose $L$ large enough s.t.\ 
$$ \sum_{k=L+1}^\infty \beta^k \Eop[ \tilde r(\mu_k^{m_N},Q_k^{m_N})] \le C \frac{\beta^{L+1}}{1-\beta}.$$ This implies $\limsup_{N\to\infty} J^N(\mu^N_0)\le J(\mu_0)$.

Step 3: We finally have to show that we can construct from $\varphi^*$ a policy $\psi^N=(\varphi_0^N,\varphi_1^N,\ldots)$ s.t.\ $\limsup_{N\to\infty} J^N(\mu^N_0)= J(\mu_0)$. This proves a) and b). Suppose $\varphi^*(\mu_0)=Q_0^*$. It is possible to construct a sequence $Q_0^N\in \Pop_N(D)$ s.t.\ $Q_0^N \Rightarrow Q_0^*$ and $\mu_0^N$ is the first margin of $Q_0^N$. This can be done as follows: Suppose $Q^*_0 = \mu_0\otimes \bar Q_0$ then $\mu_0^N\Rightarrow \mu_0$ by assumption and $Q_0^N = \mu_0^N \otimes \bar Q_0^N$ where the kernel $\bar Q_0^N$ is an appropriate discretization of $\bar Q_0$ (e.g. by quantization or quasi Monte Carlo methods). Applying the results in Step 1 we obtain $\lim_{N\to\infty} \tilde r(\mu_0^N,Q_0^N) = \tilde r(\mu^*,Q^*)$ and $\mu_1^N=\hat T(\mu_0^N, Q_0^N,\mathbf{Z}_1,Z^0_1) \Rightarrow \tilde T(\mu^*,Q^*,Z^0_1)=\mu_1^*$ $\Pop$ a.s.. Continuing in that way as in Step 1 we can attain the upper bound $J(\mu_0)$ in the limit. In order to implement this strategy the central controller has to know $Q_n^*$ or $\mu_n^*$ at time $n$. If there is no common noise, then the sequence $(\mu_0^*,Q_0^*,\mu_1^*,Q_1^*,\ldots)$ is deterministic and we only have to know the time step $n$, so the policy is non-stationary. If the common noise is present, in order to know $Q_n^*$ the central controller has to keep track of the history $(Z_1^0,Z_2^0,\ldots)$, so the policy $\psi^N$ is history-dependent. However, we know from MDP theory that such a policy can always be dominated by a Markovian policy, so 
$$ J(\mu_0) =\lim_{N\to\infty} J^N_{\psi^N}(\mu^N_0)\le \limsup_{N\to\infty} J^N(\mu^N_0)\le  J(\mu_0)$$
which yields the statements of the theorem. \hfill$\Box$

\subsubsection{ Proof of Theorem \ref{thm:average}}

Let  $\rho=\limsup_{\beta\uparrow 1} \rho(\beta)$ and let $(\beta_n)$ be the subsequence s.t.\ $\rho=\lim_{n\to\infty} \rho(\beta_n)$. Define
$$ h(\mu) := \lim_{n\to\infty} \sup_{k\ge n} \sup_{d(\mu,\mu')\le \frac1n} h^{\beta_k}(\mu')$$
where $d$ is a metric on $\Pop(S)$. Note that $h$ is a limit of bounded, continuous functions which are decreasing in $n$ and is thus  at least upper semicontinuous.

Let us now consider the $\beta$-discounted optimality equation 
$$ J^\beta(\mu) = \tilde r(\mu,\varphi^\beta(\mu))+\beta \Eop J^\beta(\tilde T(\mu,\varphi^\beta(\mu),Z^0)) $$
where $\varphi^\beta$ is an optimal decision rule in the $\beta$-discounted model. Subtracting $\beta J^\beta(\nu)$ on both sides yields:
\begin{equation}\label{eq:beta1}
\rho(\beta) + h^\beta(\mu) = \tilde r(\mu,\varphi^\beta(\mu))+\beta \Eop h^\beta(\tilde T(\mu,\varphi^\beta(\mu),Z^0)).
\end{equation} 
From Lemma 3.4 in \cite{schal93} we know that there exist sequences $(k_n)$ of integer-valued measurable mappings and $(\mu_n)$ of $\Pop(S)$-valued measurable mappings on $\Pop(S)$ such that $k_n(\mu)\to\infty$, $\mu_n(\mu)\Rightarrow \mu$ for $n\to\infty$ and $h^{\beta_{k_n(\mu)}}(\mu_n(\mu))\to h(\mu)$. Define  $Q_n(\mu)=  f^{\beta_{k_n(\mu)}}(\mu_n(\mu))$. In what follows we fix $\mu\in \Pop(S)$ and suppress the dependence on $\mu$ in our notation. Then by \eqref{eq:beta1}
\begin{equation}\label{eq:beta2}
\rho(\beta_{k_n}) + h^{\beta_{k_n}}(\mu_n) = \tilde r(\mu_n,Q_n)+\beta \Eop h^{\beta_{k_n}}(\tilde T(\mu_n,Q_n,Z^0)).
\end{equation} 
Moreover, it follows from  \cite{schal93} Proposition 3.5 that there exists a measurable function $g^0: \Pop(S)\to \Pop(D)$ s.t.\ $g^0(\mu)$ is an accumulation point of $(Q_n(\mu))$ and $g^0(\mu)\in \tilde D(\mu)$. For the fixed $\mu$ choose a subsequence $(n_m)$ of natural numbers (for simplicity denoted by $m$) such that $Q_m(\mu) \Rightarrow g^0(\mu)$. Next note since $\tilde r$ is continuous (see proof of Theorem \ref{theo:MFLexi}) we obtain
$$ \lim_{m\to\infty} \tilde r (\mu_m,Q_m)=\tilde r (\mu, g^0(\mu)).$$ 
Since for $k_n$ large enough
$$ h^{\beta_{k_n}}(\tilde T (\mu_{k_n}, Q_{k_n}, Z^0))\le \sup_{k\ge k_n} \sup_{d(\tilde T(\mu,g^0(\mu),Z^0),\mu')\le \frac{1}{k_n}} h^{\beta_{k_n}}(\mu')$$
we obtain
$\limsup_{n\to\infty}  h^{\beta_{k_n}}(\tilde T (\mu_{k_n}, Q_{k_n}, Z^0)) \le h(\tilde T(\mu,g^0(\mu),Z^0))$. Hence taking $\limsup_{m\to\infty}$ in \eqref{eq:beta2} we obtain altogether with monotone convergence for the integral
\begin{eqnarray*}
 \rho + h(\mu) &\le&  \tilde r(\mu,g^0(\mu))+ \Eop h(\tilde T(\mu,g^0(\mu),Z^0)) \\
 &\le &  \tilde r(\mu,\varphi^*(\mu))+ \Eop h(\tilde T(\mu,\varphi^*(\mu),Z^0))
\end{eqnarray*}
 where $\varphi^*$ is a maximizer of $h$ which exists since the r.h.s.\  is upper semicontinuous and since $\tilde D(\mu)$ is compact.
 This proves part a). 
 
 Iterating this inequality $n$ times yields by (A4)
 $$ n\rho + h(\mu) \le \sum_{k=0}^{n-1} \Eop_\mu^{g^0}\left[ \tilde r (\mu_k, g^0(\mu_k))\right] + \Eop_\mu^{g^0}\left[ h(\mu_n)\right]\le  \sum_{k=0}^{n-1} \Eop_\mu^{g^0}\left[ \tilde r (\mu_k, g^0(\mu_k))\right] + L. $$
 Dividing by $n$ and taking $\liminf_{n\to\infty}$ on both sides we obtain $ \rho \le G_{g^0}(\mu)$. From Lemma \ref{lem:UB}  we deduce that $g^0$ and hence also $\varphi^*$ yield an average optimal policy. The remaining statements follow from \cite{schal93}, Proposition 3.5. \hfill$\Box$

\subsubsection{ Proof of Lemma \ref{lem:TT}}

We obtain
\begin{eqnarray}\nonumber
&&W(\mu_{k+1}^*,\mu^*) = \sup_{\|f\|_{Lip}\le 1} \Big| \int f(x) (\mu_{k+1}^*(dx)-\mu^*(dx))\Big|\\ \label{eq:w1}
&\le &   \sup_{\|f\|_{Lip}\le 1} \Big| \int\!\!\!\int\!\!\!\int \big( f(T(x,a,\mu_k^*,z))-f(T(x,a,\mu^*,z)) \big)\Pop^Z (dz) \bar Q^*(da|x) \mu_k^*(dx)\Big|\\ \label{eq:w2}
&& +  \sup_{\|f\|_{Lip}\le 1} \Big| \int\!\!\!\int\!\!\!\int  f(T(x,a,\mu^*,z)) \Pop^Z (dz) \bar Q^*(da|x) (\mu_k^*(dx)-\mu^*(dx))\Big|
\end{eqnarray}
Let us first consider the term in \eqref{eq:w1}.  By the fact that all $f$ are Lipschitz with maximal constant 1 and (T1) we obtain  that \eqref{eq:w1} can be bounded by $\gamma_W W(\mu_k^*,\mu^*)$. Thus, we consider next \eqref{eq:w2}. We show that 
$$ h(x) := \int\!\!\!\int  f(T(x,a,\mu^*,z)) \Pop^Z (dz) \bar Q^*(da|x)$$ is Lipschitz with constant bounded by $\gamma_Q\gamma_A+\gamma_S$. From this property it follows then that  \eqref{eq:w2} can be bounded by $(\gamma_Q\gamma_A+\gamma_S) W(\mu_k^*,\mu^*)$.
Hence consider
\begin{eqnarray}\nonumber
&& \Big|   \int\!\!\!\int  f(T(x,a,\mu^*,z)) \Pop^Z (dz) \bar Q^*(da|x)   -    \int\!\!\!\int  f(T(x',a,\mu^*,z)) \Pop^Z (dz) \bar Q^*(da|x')    \Big|\\ \label{eq:w3}
&\le &  \Big|   \int\!\!\!\int  f(T(x,a,\mu^*,z)) \Pop^Z (dz) (\bar Q^*(da|x)   -   \bar Q^*(da|x'))    \Big|\\\label{eq:w4}
&& + \int\!\!\!\int  \Big|   f(T(x,a,\mu^*,z))-  f(T(x',a,\mu^*,z)) \Big| \Pop^Z (dz) \bar Q^*(da|x')    
\end{eqnarray}
By (T4) we can bound \eqref{eq:w4} by $\gamma_S |x-x'|$ since $f$ is Lipschitz with constant less than 1. Now finally we have to treat \eqref{eq:w3}. Here we show that $g(a) := \int  f(T(x,a,\mu^*,z)) \Pop^Z (dz) $ is Lipschitz with constant less than $\gamma_A$:
\begin{eqnarray*}
&&  \int   \Big| f(T(x,a,\mu^*,z)) -  f(T(x,a',\mu^*,z)) \Big| \Pop^Z (dz)   \le \gamma_A |a-a'|.
\end{eqnarray*}
This altogether shows that  \eqref{eq:w2} can be bounded by $(\gamma_Q\gamma_A+\gamma_S) W(\mu_{k}^*,\mu^*)$. Finally we obtain $$ W(\mu_{k+1}^*,\mu^*)\le \gamma^{k+1} W(\mu_0^*,\mu^*)\to 0$$
for $k\to\infty$ and weak convergence follows from convergence in the Wasserstein metric. \hfill$\Box$

{\em Acknowledgement:} The author would like to thank  two anonymous  referees for their comments which helped to improve the paper.

\end{document}